\documentclass[reqno,english]{amsart}
\usepackage{amsfonts,amsmath,latexsym,verbatim,amscd,mathrsfs,color,array}
\usepackage[colorlinks=true]{hyperref}
\usepackage{amssymb,amsthm,graphicx,color}
\usepackage[hmargin=3.6cm, vmargin=3.6cm]{geometry}
\usepackage{bbm}
\usepackage{amssymb}
\usepackage{pdfsync}
\usepackage{epstopdf}
\usepackage{cite}
\usepackage[dvipsnames]{xcolor}
\usepackage{graphicx}
\usepackage{multirow}
\usepackage[font=bf,aboveskip=15pt]{caption}
\usepackage[toc,page]{appendix}
\usepackage[colorlinks=true]{hyperref}
\raggedbottom
\usepackage{bookmark}
\DeclareFontShape{OT1}{cmr}{bx}{sc}{<-> cmbcsc10}{}
\setcounter{tocdepth}{6}
\setcounter{secnumdepth}{6}

\newcommand{\R}{\mathbb R}
\newcommand{\al}{\alpha}

\newcommand{\C}{\mathbb C}

\newcommand{\bt}{\begin{theorem}}
\newcommand{\et}{\end{theorem}}
\newcommand{\bl}{\begin{lemma}}
\newcommand{\el}{\end{lemma}}
\newcommand{\bd}{\begin{definition}}
\newcommand{\ed}{\end{definition}}
\newcommand{\bc}{\begin{corollary}}
\newcommand{\ec}{\end{corollary}}
\newcommand{\bp}{\begin{proof}}
\newcommand{\ep}{\end{proof}}
\newcommand{\bx}{\begin{example}}
\newcommand{\ex}{\end{example}}
\newcommand{\bi}{\begin{exercise}}
\newcommand{\ei}{\end{exercise}}
\newcommand{\bo}{\begin{prop}}
\newcommand{\eo}{\end{prop}}
\newcommand{\br}{\begin{remark}}
\newcommand{\er}{\end{remark}}
\newcommand{\be}{\begin{equation}}
\newcommand{\ee}{\end{equation}}
\newcommand{\ba}{\begin{align}}
\newcommand{\ea}{\end{align}}
\newcommand{\bn}{\begin{enumerate}}
\newcommand{\en}{\end{enumerate}}
\newcommand{\bg}{\begin{align*}}
\newcommand{\bcs}{\begin{cases}}
\newcommand{\ecs}{\end{cases}}
\newcommand{\abs}[1]{\left|#1\right|}

\newcommand{\ve}{\varepsilon  }

\newcommand{\bean}{\begin{eqnarray*}}
\newcommand{\eean}{\end{eqnarray*}}

\usepackage{theoremref}
\usepackage{mathrsfs}
\usepackage{comment}
\usepackage{cancel}


\newcommand{\xMapsto}[2][]{\ext@arrow 0599{\Mapstofill@}{#1}{#2}}
\def\Mapstofill@{\arrowfill@{\Mapstochar\Relbar}\Relbar\Rightarrow}
\makeatother

\newcommand{\bke}[1]{\left ( #1 \right )}
\newcommand{\bkt}[1]{\left [ #1 \right ]}
\newcommand{\bket}[1]{\left \{ #1 \right \}}
\newcommand{\norm}[1]{ \left\| #1  \right\|}

\renewcommand{\div}{\mathop{\rm div}}

\newcommand{\pd}{\partial}

\newcommand{\si}{\sigma}

\newcommand\De{\Delta}
\newcommand\de{\delta}
\newcommand{\na}{\nabla}
\newcommand{\lec}{{\ \lesssim \ }}

\newcommand{\EQ}[1]{\begin{equation}\begin{split} #1 \end{split}\end{equation}}
\newcommand{\EQN}[1]{\begin{equation*}\begin{split} #1 \end{split}\end{equation*}}
\newcommand{\EN}[1]{\begin{enumerate} #1 \end{enumerate}}

\newcommand{\calE}{{ \mathcal E  }}

\numberwithin{equation}{section}
\newtheorem{thm}{Theorem}[section]

\newtheorem{lem}[thm]{Lemma}
\newtheorem{prop}[thm]{Proposition}
\newtheorem{defn}[thm]{Definition}

\theoremstyle{remark}
\newtheorem{remark}{Remark}[section]

\numberwithin{equation}{section}

\begin{document}
\title[PKS-NS]{Global existence of free-energy solutions to the 2D Patlak--Keller--Segel--Navier--Stokes system with critical and subcritical mass}

\author[C. Lai]{Chen-Chih Lai}
\address{\noindent
Department of Mathematics,
University of British Columbia, Vancouver, B.C., V6T 1Z2, Canada}
\email{chenchih@math.ubc.ca}

\author[J. Wei]{Juncheng Wei}
\address{\noindent
Department of Mathematics,
University of British Columbia, Vancouver, B.C., V6T 1Z2, Canada}
\email{jcwei@math.ubc.ca}

\author[Y. Zhou]{Yifu Zhou}
\address{\noindent
Department of Mathematics,
Johns Hopkins University, 3400 N. Charles Street, Baltimore, MD 21218, USA}
\email{yzhou173@jhu.edu}

\begin{abstract}
We consider a coupled Patlak--Keller--Segel--Navier--Stokes system in $\R^2$ that describes the collective motion of cells and fluid flow, where the cells are attracted by a chemical substance and transported by ambient fluid velocity, and the fluid flow is forced by the friction induced by the cells. The main result of the paper is to show the global existence of free-energy solutions to the 2D Patlak--Keller--Segel--Navier--Stokes system with critical and subcritical mass.
\end{abstract}
\maketitle





\medskip

\section{Introduction}

\medskip

In this paper, we consider the Cauchy problem of the following 2D Patlak--Keller--Segel--Navier--Stokes system (PKS--NS system) \cite{GH-arxiv2020}

\EQ{\label{PKS-NS}
\begin{cases}
\partial_t n + u\cdot \nabla n =\Delta n-\nabla \cdot(n\nabla c)& \text{ in }\R^2\times(0,\infty),\\
-\Delta c= n& \text{ in }\R^2\times(0,\infty),\\
\partial_t u+u\cdot \nabla u +\nabla P=\Delta u +n\nabla c,\quad \nabla\cdot u=0& \text{ in }\R^2\times(0,\infty),\\
(n,u)(\cdot,0)=(n_0,u_0)& \text{ in }\R^2,
\end{cases}
}
where $n$, $c$, $u$ and $P$ represent the cell density, the concentration of chemoattractant, fluid velocity field and pressure, respectively, and $(n_0,u_0)$ is a given initial data such that $\na \cdot u_0 = 0$. The system $\eqref{PKS-NS}_1$-$\eqref{PKS-NS}_2$ is the parabolic-elliptic Patlak--Keller--Segel equations with additional effect of advection by a shear flow \cite{BH-SIMA2017, BH-SIMA2018, He-2018}, which models the chemotaxis phenomena in a moving fluid. The equations $\eqref{PKS-NS}_3$ is known as the forced Navier--Stokes equations describing the fluid motion subject to forcing generated by the cells. The forcing $n\na c$, which also appears in the Nernst--Planck--Navier--Stokes system \cite{CI2019} with an opposite sign, is to make the cells move without acceleration and to match the aggregation nonlinearity in the cell density evolution system. 

\medskip

The system \eqref{PKS-NS} can be viewed as a coupling between the Patlak--Keller--Segel system (PKS system) and the incompressible Navier--Stokes equations (NS equations). Many significant contributions have been made on the studies to the Patlak--Keller--Segel system and the incompressible Navier--Stokes equations. The Patlak--Keller--Segel system is first introduced by Patlak \cite{Patlak1953} and Keller and Segel \cite{KS1970} and simplified by Nanjundiah \cite{Nanjundiah1973}. We refer to \cite{Horstmann-2003,Horstmann-2004} for a review of the mathematical problems in chemotaxis models. It is well known that the PKS system is $L^1$ critical \cite{JL1992} and the $L^1$ norm of the solution is conserved. When the initial measure has a small atomic part, a unique mild solution is constructed in \cite{Biler-StudMath1995}. There is a competition between the tendency of cells to spread all over $\R^2$ by diffusion and the tendency to aggregate due to the drift caused by the chemoattractivity. The balance between these two mechanisms happens exactly at the critical mass $M=8\pi$. In fact, in the subcritical case $M<8\pi$, the global well-posedness of the free energy solution of PKS system is established in \cite{DP-2004,BDP-EJDE2006} under the assumptions of finite initial second moment and finite initial entropy. The proof relies on the free energy functional introduced in \cite{NSY-1997,Biler-AMSA1998,GZ-1998} and the logarithmic Hardy--Littlewood--Sobolev inequality. We refer the reader to \cite{CCLW-2012} for an alternative proof using Delort's theory on the 2D incompressible Euler equation \cite{Delort-1991}. For the asymptotic behavior of the solutions in the subcritical case, we refer the interested reader to \cite{BDEF-2010,CD-2014}. On the other hand, for the case of supercritical mass $M>8\pi$, under suitable assumption, the solution blow up in finite time due to the evolution equation of second moment, see \cite{Biler1994, Biler1995, Nagai1995, HV-JMB1996, HV-MathAnn1996, HV-1997, Biler-AMSA1998, Biler-AdvDE1998, Nagai2001, Velazquez-2002, BDP-EJDE2006, DS-2009, RS-2014, CGMN-arxiv2019} for example. Let us mention that the continuation of measure-valued solutions of the PKS system in the sense of Poupaud's weak solutions \cite{Poupaud-2002}, which make sense even if there are mass concentrations, have also been studied (see \cite{DS-2009,Velazquez-2004-1,Velazquez-2004-2,LSV-2012,BM-ARMA2014,BZ-2015}). As for the critical case $M=8\pi$, solutions are shown to be global in time (see \cite{Biler2006} for the radial case and \cite{Velazquez-2004-1} for the general case). 
An infinite time blow-up free energy solution having a finite second moment was found in \cite{BCM-CPAM2008}. The solution converges to a Dirac measure as time goes to infinity. If $M\le8\pi$ and the entropy of the initial data is finite, the global-in-time existence of classical solutions of finite entropy without the assumption of finite second moment is proved in \cite{NO-2016} (also see \cite{DNR-1998} in the radial case). A classical survey can be found in \cite{Blanchet-2013}. The precise description of the infinite-time dynamic and the limiting profile is given in \cite{CS-2006, LNY-2013, GM-2018}. Recently, it is shown in \cite{davila2019infinite} that there exists a radial function $n_0^*$ with mass $8\pi$ such that for any initial data $n_0$ sufficiently close to $n_0^*$ the solution starting from $n_0$ is globally defined and blows up at infinite time, with precise blow-up behaviors and stability obtained. It is shown in \cite{Wei2018} using a monotonicity formula that mild solutions of PKS exists globally in time if and only if $M\le8\pi$ without any additional assumptions. We refer readers to \cite{BCC2012} for non-blow-up free energy solutions which have an infinite second moment. For the incompressible Navier--Stokes equation, the existence of global weak solutions to the initial value problem has been established by Leray \cite{Leray1934Acta} and Hopf \cite{Hopf1951Acta}. For comprehensive results regarding the Navier--Stokes equation, we refer the interested readers to \cite{Sohr-book2001, Temam2001book, Lions1996book, RieussetNS, Galdi, Seregin, Tsai, BS-2018}, the references therein, and \cite{Kato-1984, GMO-1988, GS-1991, Kato-1994, Ben-Artzi-1994, Brezis-1994, ORS-1997, GMS-2001, GW-2002, GG-2005, GW-2005, BB-2009} for results in two and general dimensions.

\medskip


Similar to the PKS system, the PKS--NS system \eqref{PKS-NS} formally preserves mass, in the sense that
\[
\int_{\R^2} n(x,t)\, dx = \int_{\R^2} n_0(x)\, dx =:M
\]
for all $t\in(0,\infty)$ because $u$ is divergence-free. Moreover, since the solution of the two dimensional Poisson equation in $\eqref{PKS-NS}_2$ is given up to a harmonic function, we directly define the concentration of the chemoattractant by
\[
c(x,t) = \frac1{2\pi} \int_{\R^2} \log \frac1{|x-y|}\, n(y,t)\, dy.
\]
Furthermore, the PKS--NS system \eqref{PKS-NS} possesses a decreasing free energy functional \cite[(1.2)]{GH-arxiv2020}
\EQ{\label{eq-free-energy}
\mathcal{F}[n,u]:=\int_{\R^2} \bkt{ n\bke{\log n - \frac12\, c} + \frac12\, |u|^2 }\, dx,
}
where the first and the second terms are the entropy and a potential energy of the density $n$, respectively, and the third term is the kinetic energy of the velocity field $u$. As shown in \cite[Lemma 1.1]{GH-arxiv2020}, the free energy is dissipative along the dynamics \eqref{PKS-NS}.

\medskip

\begin{lem}[Free Energy Functional]\thlabel{free-energy}
Let $(n,u)$ be a smooth solution of \eqref{PKS-NS} and $\mathcal{F}[n,u]$ be given in \eqref{eq-free-energy},
then 
\EQN{
\frac{d}{dt}\, \mathcal{F}[n, u] = - \int_{\R^2} \bkt{ n |\na(\log n - c)|^2 +  |\na u|^2} dx.
}
\end{lem}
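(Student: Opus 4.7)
The plan is to differentiate $\mathcal{F}[n,u]$ directly under the integral, substitute the PDEs for $\partial_t n$ and $\partial_t u$, and integrate by parts, trusting that smoothness plus the usual decay at infinity kills all boundary terms. Writing $\mathcal{F} = \int n\log n\,dx - \tfrac12\int nc\,dx + \tfrac12\int |u|^2\,dx$, the time derivative is
\[
\frac{d}{dt}\mathcal{F} \;=\; \int \partial_t n\,(\log n + 1)\,dx \;-\; \tfrac12\int \bke{c\,\partial_t n + n\,\partial_t c}dx \;+\; \int u\cdot \partial_t u\,dx.
\]
Because $-\Delta c = n$, the Newtonian potential pairing is symmetric, so $\int c\,\partial_t n\,dx = \int n\,\partial_t c\,dx$; combined with $\int \partial_t n\,dx = 0$ (mass conservation, using $\nabla\cdot u = 0$), this collapses the $\partial_t$-terms into the single expression $\int \partial_t n\,(\log n - c)\,dx + \int u\cdot\partial_t u\,dx$, which is conceptually the right quantity since $\log n - c$ is the variational derivative of the density part of $\mathcal{F}$.

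Next I would substitute $\partial_t n = \nabla\cdot\bke{n\,\nabla(\log n - c)} - u\cdot\nabla n$ into the first integral. The diffusion/drift piece yields $-\int n|\nabla(\log n - c)|^2\,dx$ after one integration by parts, which is one of the target terms. For the transport piece, I would split $u\cdot\nabla n\,(\log n - c)$. The $\log n$ part integrates to zero because $u\cdot\nabla n\cdot\log n = \nabla\cdot\bke{u(n\log n - n)}$ up to the factor $\nabla\cdot u = 0$; the $-c$ part, using $\nabla\cdot u = 0$, gives $\int u\cdot\nabla n\cdot c\,dx = -\int un\cdot\nabla c\,dx$. Thus
\[
\int \partial_t n\,(\log n - c)\,dx \;=\; -\int n|\nabla(\log n - c)|^2\,dx \;-\; \int u n\cdot\nabla c\,dx.
\]

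For the kinetic piece, substitute the Navier--Stokes equation. Integration by parts gives $\int u\cdot\Delta u\,dx = -\int |\nabla u|^2\,dx$; the convection term $\int u\cdot(u\cdot\nabla u)\,dx = \tfrac12\int u\cdot\nabla|u|^2\,dx$ and the pressure term $\int u\cdot\nabla P\,dx$ both vanish by incompressibility; and the forcing contributes $+\int u n\cdot\nabla c\,dx$. Adding the two computations, the cross terms $\pm\int u n\cdot\nabla c\,dx$ cancel and we arrive exactly at $-\int n|\nabla(\log n-c)|^2\,dx - \int|\nabla u|^2\,dx$, which is the claim.

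The only real subtlety is this cross-term cancellation, which is the whole point of coupling through the forcing $+n\nabla c$: the energy input fed into the fluid by the chemotactic drift is precisely the energy $u$ extracts from the cell density. Everything else is formal since the lemma assumes smooth solutions, so I would not belabor the decay at infinity needed to justify the integrations by parts; in practice one works with sufficiently regular and spatially decaying $(n,u,c)$ (e.g.\ $n,|\nabla c|, u, \nabla u$ decaying fast enough for $n\log n$, $nc$, $n|\nabla(\log n -c)|^2$ and $|\nabla u|^2$ to lie in $L^1$) and invokes a standard density/cutoff argument if needed.
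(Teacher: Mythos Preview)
Your proof is correct and follows the same route as the paper: differentiate each piece of $\mathcal{F}$, use the symmetry of the Newtonian potential to combine the $\partial_t n$ terms into $\int \partial_t n(\log n - c)\,dx$, substitute the equations, integrate by parts, and observe that the cross term $-\int nu\cdot\nabla c\,dx$ coming from the transport of $n$ cancels exactly against $+\int nu\cdot\nabla c\,dx$ from the forcing in the $u$-equation. The paper's proof is in fact much terser than yours---it records only the intermediate identity with the two cross terms displayed and then says the lemma follows by one more integration by parts using $\nabla\cdot u=0$---so you have simply written out the details the authors left implicit.
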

\begin{proof}
A direct computation shows
\EQN{
\frac{d}{dt}\, \mathcal{F}[n, u] = - \int_{\R^2} n |\na(\log n - c)|^2\, dx + \int_{\R^2} nu\cdot \na c\, dx -  \int_{\R^2} |\na u|^2\, dx + \int_{\R^2} n\na c\cdot u\, dx.
}
The lemma follows by integrating the last term by parts and using $\na \cdot u=0$.
\end{proof}

\medskip

In \cite{GH-arxiv2020}, the local and global existence of solutions to the Patlak--Keller--Segel--Navier--Stokes system \eqref{PKS-NS} are established in the Sobolev space $H^s$, $s\ge2$, when the initial mass is strictly less than $8\pi$. The $H^s$ bound of their solutions grows exponentially in time. They also consider \eqref{PKS-NS} on a torus $\mathbb{T}^2$, and prove a similar existence result as for $\R^2$ while the solution can be  bounded in $H^s$ uniformly in time. There are many open problems regarding the PKS--NS system \eqref{PKS-NS}. For example, it is unclear that solutions exists globally in time or there exists a finite-time singularity when $M\ge8\pi$. The ambient fluid flow might suppress the potential blow-up in $\eqref{PKS-NS}_1$-$\eqref{PKS-NS}_2$ (see \cite{KX-2016, BH-SIMA2017, BH-SIMA2018, He-2018}). Despite of the known results of suppression of explosion on torus, finite-time blow-ups of \eqref{PKS-NS} are possible in $\R^2$ when $u$ is divergence-free. 
In a forthcoming work \cite{infinitePKSNS}, we construct infinite time blow-up solutions to the PKS--NS system in the critical case $M=8\pi$ by a new inner-outer gluing method. It is worth mentioning that in the radially symmetric class, the PKS--NS system is decoupled which can be seen from the dynamics of the second moment \eqref{eq-2nd-moment-PKSNS} and PKS system itself can develop finite time singularities in the case $M>8\pi$ (see \cite{Velazquez-2002, CGMN-arxiv2019, Mizoguchi-2020} for example). We will get back to the construction of finite time blow-up in the supercritical case $M>8\pi$ for the PKS--NS system without any symmetry in a future work. In another aspect, the PKS--NS system in a bounded domain with Neumann boundary condition is also an intriguing problem.


\medskip

The main aim of this paper is to show the existence of global-in-time solutions for the Patlak--Keller--Segel--Navier--Stokes system \eqref{PKS-NS} with mass $M\le8\pi$. Throughout the paper, the initial data are assumed to satisfy
\EQ{\label{initial-n}
0\le n_0 \in L^1(\R^2),\qquad n_0\log n_0\in L^1(\R^2), \qquad n_0\log (1+|x|)\in L^1(\R^2),
}
and
\EQ{\label{initial-u}
u_0\in W^{1,2}_{0,\si}(\R^2) := \overline{C^\infty_{0,\si}(\R^2)}^{\norm{\cdot}_{W^{1,2}(\R^2)}} = \{u\in W^{1,2}(\R^2) : \div u = 0\},
}
where $C^\infty_{0,\si}(\R^2):= \{u\in C^\infty_0(\R^2) : \div u = 0\}$. 

\medskip

We now define the free-energy solution of the PKS--NS system \eqref{PKS-NS} of which the concept is introduced in \cite{BCM-CPAM2008} for PKS system.

\medskip

\begin{defn}[Free-Energy Solution] Given $T>0$, $(n,u)$ is a free-energy solution to the Patlak--Keller--Segel--Navier--Stokes system \eqref{PKS-NS} with initial data $(n_0,u_0)$ on $[0,T]$ if $(1+|\log n|)n\in L^\infty(0,T;L^1(\R^2))$, $u\in L^\infty(0,T;L^2(\R^2))\cap L^2(0,T;\dot H^1(\R^2)$, $(n,u)$ satisfies \eqref{PKS-NS} in distributional sense, and
\EQ{\label{ineq-free-energy}
\mathcal{F}[n, u](t) + \int_0^t\int_{\R^2} \bkt{ n(x,s) |\na(\log n(x,s) - c(x,s))|^2 + \frac12\, |\na u(x,s)|^2} dxds \le \mathcal{F}[n_0, u_0]
}
for almost every $t\in(0,T)$.
\end{defn}

\medskip

It will be shown in Section \ref{S-mild-solution} that every mild solution $(n,u)$ is a free-energy solution.

\medskip

The following is our main result on global-in-time existence of solutions to \eqref{PKS-NS}.

\medskip

\begin{thm}[Global existence]\thlabel{NO-thm1.2}
Suppose that $(n_0,u_0)$ satisfies the assumptions \eqref{initial-n} and \eqref{initial-u}, and
\[
M := \int_{\R^2} n_0\, dx \le 8\pi,
\]
then the free-energy solution $(n,u)$ to the Patlak--Keller--Segel--Navier--Stokes system \eqref{PKS-NS} exists globally in time. Moreover, the $L^\infty$-norm of the solution does not blow up in finite time, i.e., for any $0<t_0<T<\infty$
\EQ{\label{eq-Linf-bound}
\sup_{t\in(t_0,T)} \bke{ \norm{n(t)}_\infty + \norm{u(t)}_\infty } <\infty.
}
\end{thm}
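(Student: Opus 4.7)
The plan is to construct $(n,u)$ as a limit of globally-defined smooth solutions to a regularized version of \eqref{PKS-NS}, in the spirit of \cite{BDP-EJDE2006,BCM-CPAM2008,NO-2016} for the PKS system combined with Leray--Hopf theory for the forced Navier--Stokes equations. First I would replace the Newtonian kernel in $\eqref{PKS-NS}_2$ by a regularized kernel $K_\ve(x)=\tfrac{1}{2\pi}\log\frac{1}{\sqrt{|x|^2+\ve^2}}$ and pick smooth, compactly supported approximations $(n_0^\ve,u_0^\ve)$ of $(n_0,u_0)$ converging in the topologies corresponding to \eqref{initial-n}--\eqref{initial-u}, while keeping the initial free energy $\mathcal{F}[n_0^\ve,u_0^\ve]$ and the initial second moment $\int |x|^2 n_0^\ve\,dx$ uniformly bounded. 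For each $\ve>0$ the modified system admits a global smooth solution $(n^\ve,c^\ve,u^\ve)$ by a contraction argument and continuation, since $\nabla c^\ve$ is now globally Lipschitz and $n^\ve$ satisfies a maximum principle on bounded time intervals.

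The next step is to extract $\ve$-independent bounds. The free-energy identity of \thref{free-energy} survives the regularization and gives
\[
\mathcal{F}[n^\ve,u^\ve](t)+\int_0^t\!\!\int_{\R^2}\bigl[n^\ve|\nabla(\log n^\ve-c^\ve)|^2+|\nabla u^\ve|^2\bigr]\,dx\,ds \le \mathcal{F}[n_0^\ve,u_0^\ve].
\]
Combining this with the logarithmic Hardy--Littlewood--Sobolev inequality at mass $M\le 8\pi$ bounds the potential $\int n^\ve c^\ve\,dx$ by the entropy $\int n^\ve\log n^\ve\,dx$ up to a mass-dependent constant, which in the strictly subcritical regime yields uniform control of the entropy together with $\|u^\ve\|_{L^\infty_t L^2_x}$ and $\|\nabla u^\ve\|_{L^2_{t,x}}$. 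A parallel computation for the second moment, using the divergence-free condition on $u^\ve$ and the hypothesis $n_0\log(1+|x|)\in L^1$, prevents mass escape to infinity and makes the negative part $(n^\ve\log n^\ve)_-$ equi-integrable.

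With these bounds I would upgrade $n^\ve$ to $L^p$ for every finite $p$, and then to $L^\infty$ on $[t_0,T]$ with $t_0>0$, by a Moser/De Giorgi iteration applied to the reformulated equation $\partial_t n^\ve+(u^\ve-\nabla c^\ve)\cdot\nabla n^\ve=\Delta n^\ve+(n^\ve)^2$, as in \cite{NO-2016}; the transport term is absorbed via $\nabla\cdot u^\ve=0$ and the $L^2_t H^1_x$ control of $u^\ve$, while the forcing $n^\ve\nabla c^\ve$ in the Stokes part is handled by $L^p$ estimates on $n^\ve$ and Serrin-type criteria, yielding time-localized $L^\infty$ bounds on $u^\ve$. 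Standard parabolic and Stokes smoothing then produce uniform bounds on $\partial_t n^\ve$ and $\partial_t u^\ve$ in negative Sobolev spaces, enabling Aubin--Lions compactness. Passing to the limit $\ve\to 0$ along a subsequence produces a solution of \eqref{PKS-NS} in the distributional sense, the energy inequality \eqref{ineq-free-energy} follows from lower semicontinuity, and \eqref{eq-Linf-bound} is inherited from the $L^\infty$ bounds of the approximations.

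The main obstacle is the critical case $M=8\pi$: the log-HLS inequality becomes sharp, so $\mathcal{F}$ is no longer coercive and one cannot extract a uniform entropy bound for $n^\ve$ from the free-energy dissipation alone. Following \cite{NO-2016,BCM-CPAM2008}, the remedy is to exploit the dissipation $\int_0^t\!\int n^\ve|\nabla(\log n^\ve-c^\ve)|^2\,dx\,ds$ jointly with the second-moment evolution to quantitatively rule out concentration of mass at a single point; this is the delicate step. The coupling to Navier--Stokes complicates this only mildly, since $\nabla\cdot u^\ve=0$ leaves the mass and second-moment identities essentially unchanged and the new cross term $\int n^\ve u^\ve\cdot x\,dx$ is absorbed using $\|u^\ve\|_{L^\infty_t L^2_x}$ and the bound on $\||x|\sqrt{n^\ve}\|_{L^2}$ supplied by the second moment.
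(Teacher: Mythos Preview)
Your proposal has a genuine gap in the critical case $M=8\pi$, and it is precisely the obstacle the paper singles out in the introduction. You claim that the cross term $\int n^\ve u^\ve\cdot x\,dx$ in the second-moment evolution can be ``absorbed using $\|u^\ve\|_{L^\infty_t L^2_x}$ and the bound on $\||x|\sqrt{n^\ve}\|_{L^2}$''. But Cauchy--Schwarz applied to $\int n\,u\cdot x\,dx=\int(\sqrt{n}\,u)\cdot(\sqrt{n}\,x)\,dx$ yields $\bigl(\int n|u|^2\bigr)^{1/2}\bigl(\int |x|^2 n\bigr)^{1/2}$, not $\|u\|_2\,\||x|\sqrt{n}\|_2$; and $\int n|u|^2$ requires either $u\in L^\infty$ or $n\in L^p$ for some $p>1$, neither of which is available a priori at critical mass---indeed these are exactly the bounds you are trying to derive. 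The paper states this explicitly around \eqref{eq-2nd-moment-PKSNS}: ``The sign of the last term is unknown. It is unclear how to control the integral unless the solution is radially symmetric.'' Even if you obtained a Gr\"onwall-type inequality for the second moment, the concentration-exclusion argument of \cite{BCM-CPAM2008} relies on the exact conservation $\frac{d}{dt}\int|x|^2 n=0$ at $M=8\pi$, which is lost here. A secondary issue: the hypotheses \eqref{initial-n} only demand $n_0\log(1+|x|)\in L^1$, strictly weaker than finite second moment, so your uniformly-bounded-second-moment approximations are not compatible with the stated theorem.

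The paper's route is entirely different. It does not approximate or use second moments at all. It constructs a local mild solution directly (\thref{local-exist}), observes it is instantly smooth, and then argues by continuation. For $M<8\pi$ it simply invokes \cite{GH-arxiv2020} after the smoothing. For $M=8\pi$, the heart of the matter is \thref{NO-thm3.1}: a uniform bound on the modified entropy $\int(1+n)\log(1+n)$. This is obtained by a spatial decomposition (exterior/annulus/interior) following \cite{NO-2016}. The crucial idea replacing the second-moment argument is \thref{NO-prop3.12}(iii): one shows that $n$ is bounded below by some $\delta>0$ on a small ball in the annulus, so the mass inside the interior ball is at most $8\pi-\delta\pi\ve_0^2<8\pi$. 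This strict deficit is what allows the Brezis--Merle inequality (\thref{NO-lem2.7}) to close the interior estimate, with the velocity-coupling terms handled through the Prodi--Serrin regularity coming from \thref{u-a-priori} (where, notably, the energy bound on $u$ is $T$-independent precisely when $M=8\pi$). Once the modified entropy is bounded, $L^\infty$ control follows by a short bootstrap (\thref{prop-Linfty-bound}), and global existence by showing $(n,u)$ has a limit in $L^{4/3}\times L^4$ at $T_m$ and reapplying local existence.
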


\medskip

In order to prove \thref{NO-thm1.2}, we need to use the following proposition.

\medskip

\begin{prop}\thlabel{NO-thm3.1}
Let $(n_0,u_0)$ satisfy \eqref{initial-n} and \eqref{initial-u}. Assume also $\norm{n_0}_1 = 8\pi$. For a local-in-time mild solution $(n,u)$ on $[0,T)$ with initial data $(n_0,u_0)$ given in \thref{local-exist}, it holds that for $0<t_0<T$,
\[
\sup_{t_0\le t<T} \int_{\R^2} (1+n(t)) \log(1+n(t))\, dx < \infty.
\]
\end{prop}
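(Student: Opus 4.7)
\emph{Proof plan.} The plan is to derive a closed differential inequality for the Orlicz-type entropy $H(t) := \int_{\R^2} (1+n(t))\log(1+n(t))\, dx$ and then close it via Gronwall.

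First, test the $n$-equation against $1 + \log(1+n)$ and integrate over $\R^2$. Two structural observations are crucial. Since $(1+\log(1+n))\nabla n = \nabla[(1+n)\log(1+n)]$ and $\nabla \cdot u = 0$, the fluid transport term integrates to zero: $u$ makes \emph{no} direct contribution to $dH/dt$. Next, using $-\Delta c = n$ together with $\tfrac{n}{1+n}\nabla n = \nabla[n - \log(1+n)]$, the chemotactic drift contributes exactly $\int n^2\, dx - \int n\log(1+n)\, dx$. Combined with the diffusive identity $\int \tfrac{|\nabla n|^2}{1+n}\, dx = 4\int |\nabla \sqrt{1+n}|^2\, dx$, these calculations yield the exact evolution
\[
\frac{d}{dt}H(t) + 4\int_{\R^2} |\nabla \sqrt{1+n}|^2\, dx + \int_{\R^2} n\log(1+n)\, dx \,=\, \int_{\R^2} n^2\, dx.
\]
Thus the proposition reduces to an interpolation estimate on $\int n^2$ that can be absorbed by the dissipative terms on the left, and the Navier--Stokes component of the system is entirely passive in this argument.

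Next, set $m := \sqrt{1+n}$. The subadditivity bound $(\sqrt{1+n}-1)^2 \le n$ gives $\|m-1\|_{L^2}^2 \le \|n\|_{L^1} = M = 8\pi$. Writing $n = (m-1)(m+1)$, splitting the integration domain according to $\{m \le 2\}$ and $\{m > 2\}$, and applying the two-dimensional Gagliardo--Nirenberg inequality to $m - 1 \in L^2 \cap \dot H^1$, one arrives at an estimate of the form
\[
\int_{\R^2} n^2\, dx \,\le\, \frac{M}{8\pi}\cdot 4\int_{\R^2} |\nabla \sqrt{1+n}|^2\, dx \,+\, (\text{lower-order in } H).
\]
At $M = 8\pi$ the leading coefficient equals exactly $1$, \emph{saturating} the dissipation on the left of the previous identity. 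To extract a strict slack $\delta > 0$ one has to invoke the extra good term $\int n\log(1+n)$ together with the upper bound on $\int n\log n$ that comes from the monotonicity of $\mathcal{F}[n,u]$ in \thref{free-energy} combined with the logarithmic Hardy--Littlewood--Sobolev inequality at critical mass. A concentration-type splitting of the high-amplitude part of $n$---whose $L^2$ mass is controlled by $H(t)$ itself via $n\mathbf{1}_{\{n>K\}} \le \tfrac{n\log(1+n)}{\log(1+K)}$---then yields a final inequality of the form
\[
\int_{\R^2} n^2\, dx \,\le\, (4-\delta)\int_{\R^2} |\nabla \sqrt{1+n}|^2\, dx \,+\, C\,\bigl(1 + H(t)\bigr).
\]

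Substituting back gives $\frac{dH}{dt} + \delta \int |\nabla\sqrt{1+n}|^2 dx \le C(1+H(t))$, and Gronwall's inequality together with $H(t_0) < \infty$ (which follows from the smoothing of the local mild solution constructed in \thref{local-exist} at any positive time) gives $\sup_{t_0 \le t < T} H(t) < \infty$, as required. The main obstacle is the second step: at critical mass the Gagliardo--Nirenberg bound is exactly saturated, so obtaining a genuine $\delta > 0$ requires carefully combining $L\log L$ control, the free-energy dissipation, and a concentration-type splitting---this is where the full strength of the critical-mass hypothesis $M = 8\pi$ is consumed. The incompressible Navier--Stokes piece influences the argument only indirectly, through the transport of $n$ in the underlying dynamics.
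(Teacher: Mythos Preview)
Your differential identity
\[
\frac{d}{dt}H(t) + 4\int_{\R^2}|\nabla\sqrt{1+n}|^2\,dx + \int_{\R^2} n\log(1+n)\,dx = \int_{\R^2} n^2\,dx
\]
is correct, and it is true that the concentrating stationary profiles $n_\lambda = 8\lambda/(\lambda+|x|^2)^2$ saturate the comparison $\int n^2 \sim 4\int|\nabla\sqrt{1+n}|^2$ as $\lambda\to0$, so at $M=8\pi$ the Gagliardo--Nirenberg route is indeed borderline. The fatal gap is your proposed mechanism for extracting a strict slack $\delta>0$. You invoke ``the upper bound on $\int n\log n$ that comes from the monotonicity of $\mathcal{F}[n,u]$ \ldots\ combined with the logarithmic Hardy--Littlewood--Sobolev inequality at critical mass,'' but at $M=8\pi$ that combination gives precisely
\[
\mathcal{F}[n_0,u_0]\ \ge\ \Bigl(1-\tfrac{M}{8\pi}\Bigr)\int n\log n\,dx - C(M) + \tfrac12\|u\|_2^2,
\]
and the coefficient in front of $\int n\log n$ vanishes. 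So the free energy yields \emph{no} control on the entropy of $n$ at critical mass (this is exactly why \thref{u-a-priori} only bounds $\|u\|_2^2$, with the $T$-independence noted as special to $M=8\pi$). Your amplitude splitting $n\mathbf{1}_{\{n>K\}}\le n\log(1+n)/\log(1+K)$ then only bounds the high-amplitude $L^1$ mass by $H(t)/\log(1+K)$; to turn this into a gain $\delta>0$ in the dissipation coefficient you would need this to be \emph{small}, i.e.\ an a~priori bound on $H(t)$, which is exactly what you are trying to prove. The argument is circular at the one point that matters.

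The paper's proof is structurally different and supplies the missing idea. Rather than a global amplitude splitting, it performs a \emph{spatial} decomposition of $\R^2$ into an exterior $\{|x|\ge R_0\}$, an annulus, and an interior ball, following \cite{NO-2016}. In the exterior a Littlewood--Paley dyadic localization closes the modified-entropy inequality because the local mass in each shell is small (\thref{NO-prop3.2}); bootstrapping then yields $L^\infty$ bounds on $n$, $u$, $\nabla c$ in annuli (\thref{NO-lem3.4}--\thref{NO-lem3.11}, \thref{NO-prop3.12}). The crucial step is the interior: by the strong maximum principle (\thref{local-exist}(vi)) one has $n\ge\delta>0$ on a small ball inside the annulus, so the mass inside $\{|x|<R\}$ is at most $8\pi-\delta\pi\varepsilon_0^2<8\pi$. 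This \emph{strict local subcriticality} is what allows the Brezis--Merle inequality (\thref{NO-lem2.7}) to bound $\int\exp(|c_m|/2(1-a))\Psi_R\,dx$ and close a Gronwall inequality for the localized functional $H_{\mathrm{int}}(t;R)$ (\thref{NO-prop3.16}). The Navier--Stokes component is \emph{not} passive in this scheme: bounds on $u$ from \thref{u-a-priori} and \thref{u-regularity}--\thref{u-regularity-1} are used throughout the exterior and annulus estimates, and an extra term $F_3$ involving $u$ and the pressure must be handled in the interior (\thref{lem-F3}). In short, the genuine source of slack at critical mass is spatial --- positivity of $n$ forces mass to leak outside any fixed ball --- and your global interpolation approach does not access it.
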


\medskip

Although the PKS--NS system shares many features with the PKS system because of $u$ being divergence-free, there are still some significant properties holds for PKS but fails for PKS--NS. It is well-known that solutions of the PKS system with suitable spatial decay also satisfy the conservation of the first moment (the center of mass) $\int_{\R^2} n(x,t)\,x\, dx$, and the equation of the second moment (the variance) $\frac{d}{dt} \int_{\R^2} |x|^2 n(x,t)\, dx = 4M\bke{1-\frac{M}{8\pi}}$. From the evolution equation of the second moment, we can easily see that solutions with finite initial second moment do not exist globally if $M>8\pi$. Moreover, the second moment is conserved for the critical case $M=8\pi$, which is one of the keys to prove the global existence in \cite{BCM-CPAM2008}. However, for the PKS--NS system \eqref{PKS-NS}, unlike the PKS system, the dynamics of the first moment and the second moment are more delicate due to the coupling from the velocity field. In fact, they satisfy
\[\frac{d}{dt}\int_{\R^2} n(x,t)x_i\,dx = \int_{\R^2} n(x,t)u_i(x,t)\,dx\]
and
\EQ{\label{eq-2nd-moment-PKSNS}
\frac{d}{dt} \int_{\R^2} |x|^2 n(x,t)\,dx = 4M - \frac{M^2}{2\pi} + 2 \int_{\R^2}n(x,t) u(x,t)\cdot x\, dx.
}
The sign of the last term in \eqref{eq-2nd-moment-PKSNS} is unknown. It is unclear how to control the integral unless the solution is radially symmetric \cite[Corollary 1]{GH-arxiv2020}, in which case the integral vanishes (the system gets decoupled). The lack of control over the second moment is the main obstacle to bound solutions uniform in time. Instead of using the second moment, we employ the following modified free energy functional
\EQ{\label{eq-mfree}
H[n,u](t) := \int_{\R^2} (1+n(t)) \log(1+n(t))\, dx - \frac12 \int_{\R^2} n(t) c(t)\, dx + \frac12 \int_{\R^2} |u(t)|^2\, dx
}
and the Brezis--Merle inequality as in \cite{NO-2016} to estimate the modified free energy in the interior region in the proof of \thref{NO-thm3.1}. Details of the proof of \thref{NO-thm3.1} are given in Section \ref{S-apriori-PKS}. It is worth mentioning that the modified entropy $\int_{\R^2}(1+n(t)) \log(1+n(t))\, dx$ in \eqref{eq-mfree} is non-negative and has been used to get nonnegative global solutions for parabolic-parabolic Keller--Segel system in \cite{NO-2011} and \cite{Mizoguchi-2013}.

\medskip

\begin{remark}
Unlike the free energy functional $\mathcal{F}[n,u]$, the modified energy $H[n,u]$ defined in \eqref{eq-mfree} does not decrease in time. In fact, it formally satisfies
\EQN{
\frac{d}{dt}\, H[n,u] 
&= -\int_{\R^2} \abs{\na \log(1+n) - \frac{\na c}2}^2 dx - \int_{\R^2} n \abs{\na \log(1+n) - \na c}^2 dx - \int_{\R^2} |\na u|^2\, dx\\
&\quad + \int_{\R^2} \frac{|\na c|^2}4\, dx.
}
\end{remark}

\medskip

Let us explain the idea for the proof of our main result, \thref{NO-thm1.2}. First of all, we employ the free energy functional $\mathcal{F}[n,u]$ defined in \eqref{eq-free-energy} and the logarithmic Hardy--Littlewood--Sobolev inequality to obtain the a priori estimate of the fluid velocity. For the critical case $M=8\pi$, the energy of fluid velocity is uniformly bounded in time (see \thref{u-a-priori}), and thus the velocity of mild solution verify the Prodi--Serrin conditions for regularity (see \thref{u-regularity} and \thref{u-regularity-1}). The regularity of $u$ allows us to control all additional terms arising from the coupling when deriving the a priori entropy estimate of the advection-Patlak--Keller--Segel system $\eqref{PKS-NS}_1$-$\eqref{PKS-NS}_2$ in Section \ref{S-apriori-PKS}. For the subcritical regime $M<8\pi$, we improve the result of \cite{GH-arxiv2020} by reducing the regularity assumption on the initial data $(n_0,u_0)$. In fact, we show that solutions with the initial conditions \eqref{initial-n} and \eqref{initial-u} become $H^s$ in short time, thus the global-in-time existence follows directly from \cite{GH-arxiv2020}.

\medskip

The rest of this paper is organized as follows. In Section \ref{S-preliminary}, we prepare some analysis tools that used in this paper. In Section \ref{S-mild-solution}, we introduce the notion of mild solutions of \eqref{PKS-NS} and discuss local existence and regularity of the mild solutions. In Section \ref{S-apriori-NSE}, we obtain a priori estimates of the forced Navier--Stokes equation. In Section \ref{S-apriori-PKS}, we derive a priori estimates of the modified entropy and prove \thref{NO-thm3.1}. Section \ref{S-global-exist} is devoted to prove \thref{NO-thm1.2}, the global existence of solutions to \eqref{PKS-NS}.

\bigskip

\section{Preliminary}\label{S-preliminary}

\medskip

Before proving our main theorem, we recall some well-known and useful lemmas. 

\medskip

In the light of the critical Sobolev embedding in $\R^2$, we define the class of functions of the bounded mean oscillations $BMO(\R^2)$ by 
\[
\norm{f}_{BMO} := \sup_{B} \frac1{|B|} \int_B |f-f_B|\, dx \quad \text{ with } f_B:= \frac1B \int_B f\, dx,
\]
where the supremum being taken over the set of Euclidean balls $B$ and $|B|$ is the Lebesgue measure of $B$. It is well-known that $H^1(\R^2)$ is embedded in $BMO(\R^2)\cap L^2(\R^2)$ as a corollary of the Poicar\'{e} inequality.

\medskip

In view of the modified entropy $\int_{\R^2} (1+n) \log(1+n)\, dx$ appears in the modified free energy, we recall the notion of the Orlicz space.

\medskip

\begin{defn}[Orlicz space \cite{Tr-1967, BMM-2011}]
Let $\phi:\R_+\to\R_+$ be a convex function such that
\EQ{\label{def-orlicz}
\lim_{s\to0_+} \frac{\phi(s)}s = 0,\quad \lim_{s\to\infty} \frac{\phi(s)}s = \infty.
}
Then the Orlicz class $L_\phi(\R^2)$ consists of all measurable functions $f:\R^2\to\C$ satisfying 
\[
\int_{\R^2} \phi(|f(x)|) dx <\infty.
\]
\end{defn}

\medskip

Since $\phi(s)=(1+s)\log(1+s)-s$ is a non-negative, convex function satisfying \eqref{def-orlicz}, non-negative functions with finite total mass and finite modified entropies forms the Orlicz class $L_{(1+\cdot)\log(1+\cdot)}=:L\log L$. If $f\in L\log L$, then the solution $\psi$ to the Poisson equation 
\EQ{\label{eq-Poisson}
-\De\psi = f \quad \text{ in }\R^2
}
is a locally bounded function. If $f\log(1+|x|)\in L^1$ then $\psi$ becomes a locally integrable function.

\medskip

The following proposition of $BMO$ estimate for the solution of two dimensional Poisson equation is well known.

\medskip

\begin{prop}[$BMO$ estimate]\thlabel{BMO-est}
Let $f\log(2+|x|)\in L^1(\R^2)$ and $\psi$ be the solution of the Poisson equation \eqref{eq-Poisson}. Then we have
\[
\norm{\psi}_{BMO} \le C\norm{f}_1,
\]
where $C$ is a constant independent of $f$.
\end{prop}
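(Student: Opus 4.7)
The plan is to work directly from the explicit Newtonian representation
\[
\psi(x) = \frac{1}{2\pi}\int_{\R^2}\log\frac{1}{|x-y|}\,f(y)\,dy,
\]
which is well defined modulo an additive constant thanks to the hypothesis $f\log(2+|x|)\in L^1(\R^2)$. Fix an arbitrary Euclidean ball $B=B(x_0,r)\subset\R^2$ and split $f=f_1+f_2$ with $f_1:=f\chi_{2B}$ (near field) and $f_2:=f\chi_{(2B)^c}$ (far field); let $\psi_i$ be the Newtonian potential of $f_i$, so that $\psi=\psi_1+\psi_2$ up to a constant. Since the $BMO$ seminorm is insensitive to additive constants, it suffices to control the mean oscillation of $\psi_1$ and $\psi_2$ on $B$ separately by $C\norm{f}_1$ with $C$ independent of $B$ and $f$.

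For the far field, $\psi_2$ is harmonic on $2B$, and differentiation under the integral is legitimate because $|x-y|\ge r$ whenever $x\in B$ and $y\in(2B)^c$; this yields
\[
|\na\psi_2(x)|\le \frac{1}{2\pi}\int_{(2B)^c}\frac{|f(y)|}{|x-y|}\,dy\le \frac{\norm{f}_1}{2\pi r}\qquad\text{for all }x\in B,
\]
so the oscillation of $\psi_2$ on $B$ is at most $2r\cdot\norm{f}_1/(2\pi r) = \norm{f}_1/\pi$, hence the mean oscillation is $\le C\norm{f}_1$.

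For the near field, the key trick is to normalise the logarithmic kernel at scale $r$: replace $\psi_1$ by
\[
\widetilde\psi_1(x):=\frac{1}{2\pi}\int_{2B}\log\frac{r}{|x-y|}\,f(y)\,dy,
\]
which differs from $\psi_1$ by a single global additive constant and hence has the same $BMO$ seminorm. Swapping the order of integration and passing to polar coordinates centred at $y\in 2B$, followed by the dimensionless substitution $\rho=rt$, one finds
\[
\int_B\abs{\log\frac{r}{|x-y|}}\,dx \le \int_{|z|\le 3r}\abs{\log\frac{r}{|z|}}\,dz = Cr^2
\]
with $C$ a universal constant. Consequently $\frac{1}{|B|}\int_B|\widetilde\psi_1|\,dx\le C\norm{f}_1$, and a fortiori
\[
\frac{1}{|B|}\int_B\abs{\widetilde\psi_1-(\widetilde\psi_1)_B}\,dx \le \frac{2}{|B|}\int_B|\widetilde\psi_1|\,dx \le C\norm{f}_1,
\]
so the near-field mean oscillation is also controlled by $C\norm{f}_1$.

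Combining the two estimates and taking the supremum over all balls $B\subset\R^2$ gives $\norm{\psi}_{BMO}\le C\norm{f}_1$. The only delicate point is the rigorous justification of the decomposition $\psi=\psi_1+\psi_2+\text{const}$ and of the differentiation under the integral sign in the far-field step; for these the assumption $f\log(2+|x|)\in L^1$ guarantees absolute convergence of all the relevant integrals. The remaining ingredients are the standard scale-invariant Calder\'on--Zygmund type manipulations, so I do not expect any serious obstacle beyond these justifications.
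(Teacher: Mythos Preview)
Your proof is correct, but the paper takes a shorter, more conceptual route. Instead of splitting $f$ into near and far field parts relative to a fixed ball, the paper invokes directly the fact that $\log|x|\in BMO(\R^2)$ and applies a single Fubini argument: writing
\[
\psi(x)-\psi_B=\int_{\R^2}\Bigl(\log|x-y|-\frac{1}{|B|}\int_B\log|z-y|\,dz\Bigr)f(y)\,dy,
\]
one averages over $x\in B$, swaps integrals, and after the translation $x\mapsto x+y$ the inner integral is exactly the mean oscillation of $\log|\cdot|$ over the translated ball $B+y$, hence bounded by the universal constant $\norm{\log|\cdot|}_{BMO}$. Multiplying by $|f(y)|$ and integrating gives the result in one line.

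Your near/far decomposition is in effect a hands-on reproof of $\log|x|\in BMO$: the far-field gradient bound handles balls away from the singularity, and your scale-normalised near-field computation $\int_{|z|\le 3r}|\log(r/|z|)|\,dz=Cr^2$ is precisely the estimate showing bounded mean oscillation on balls near the origin. So the two arguments are morally equivalent; the paper's version is more economical because it packages this computation into the known $BMO$ membership of the logarithm, while yours is more self-contained and exhibits the Calder\'on--Zygmund template explicitly, which can be useful if one later needs variants of the estimate.
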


\begin{proof}
It is well known that $\log|x|\in BMO$. Since
\[
\psi(x) - \frac1{|B|} \int_B \psi(y)\, dy = \int_{\R^2} \bkt{\log|x-y| - \frac1{|B|} \int_B \log|z-y|\, dz} f(y)\, dy,
\]
we have
\EQN{
\frac1{|B|} \int_B& \abs{\psi(x) - \frac1{|B|} \int_B \psi(y)\, dy} dx\\
&\le \frac1{|B|} \int_{\R^2} \int_B \abs{\log|x-y| - \frac1{|B|} \int_B \log|z-y|\, dz} dx |f(y)|\, dy\\
& = \frac1{|B|} \int_{\R^2} \int_{B_y} \abs{\log|x| - \frac1{|B_y|} \int_{B_y} \log|z|\, dz} dx |f(y)|\, dy,\quad B_y = B+y,\\
&\le \frac1{|B|}\, \norm{\log\abs{\cdot}}_{BMO} \norm{f}_1,
}
completing the proof.
\end{proof}

\medskip

$BMO$ functions are locally $L^p$. In fact, using the John--Nirenberg inequality, one can prove the following lemma.

\medskip

\begin{lem}[\hspace{-0.125cm}$\text{\cite[(7), pp. 144]{Stein-book}}$]\thlabel{BMO-def-est}
Suppose $f\in BMO(\R^2)$. Then for any $1<p<\infty$, $f$ is locally in $L^p$, and 
\[
\frac1{|B|} \int_B |f-f_B|^p\, dx \le C_p \norm{f}_{BMO}^p
\]
for any balls $B$.
\end{lem}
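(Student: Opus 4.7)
The plan is to deduce this from the John--Nirenberg inequality, which is the standard tool for converting the single scale bound in the $BMO$ seminorm into bounds for all higher moments. First I would state (or recall from Stein's book) the John--Nirenberg inequality: there exist absolute constants $c_1,c_2>0$ such that for every $f\in BMO(\R^2)$, every ball $B$, and every $\lambda>0$,
\[
\abs{\bket{x\in B : |f(x)-f_B|>\lambda}} \le c_1\, e^{-c_2\lambda/\norm{f}_{BMO}}\, |B|.
\]
This is the substantive input. Its proof goes by a Calderón--Zygmund stopping time argument applied to $f-f_B$ on $B$: at level $\alpha=2\norm{f}_{BMO}$ one extracts a disjoint family of sub-balls whose total measure is at most $|B|/2$ and on whose complement $|f-f_B|\le\alpha$ a.e., and then one iterates to get the exponential decay. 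I would sketch this briefly or simply cite it, since it is classical.

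Granted John--Nirenberg, the lemma follows from the layer-cake formula. For any ball $B$ and $1<p<\infty$,
\[
\int_B |f-f_B|^p\, dx = p\int_0^\infty \lambda^{p-1}\, \abs{\bket{x\in B:|f(x)-f_B|>\lambda}}\, d\lambda
\le c_1 |B|\, p \int_0^\infty \lambda^{p-1} e^{-c_2\lambda/\norm{f}_{BMO}}\, d\lambda.
\]
The last integral is a Gamma integral: with the substitution $t=c_2\lambda/\norm{f}_{BMO}$ it equals $c_2^{-p}\norm{f}_{BMO}^p\, \Gamma(p)$. Setting $C_p := c_1\, p\, c_2^{-p}\, \Gamma(p)$ and dividing by $|B|$ yields the stated bound
\[
\frac{1}{|B|}\int_B |f-f_B|^p\, dx \le C_p\, \norm{f}_{BMO}^p.
\]

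The only real obstacle is John--Nirenberg itself; once it is in hand the rest is a one line layer-cake computation. In particular no structure beyond the $BMO$ seminorm of $f$ enters, so the constant $C_p$ depends only on $p$ (through $\Gamma(p)$ and the absolute constants $c_1,c_2$) and is independent of both $f$ and the ball $B$, which is exactly what the lemma asserts. Since in the paper this bound is invoked only as a black box to convert the $BMO$ estimate of \thref{BMO-est} into local $L^p$ control of the chemoattractant, sketching the proof to this level of detail should suffice.
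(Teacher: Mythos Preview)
Your proposal is correct and follows exactly the approach indicated in the paper: the lemma is stated with a citation to Stein and is prefaced by the remark that it follows from the John--Nirenberg inequality, without a detailed proof being given. Your layer-cake computation from John--Nirenberg is the standard argument and matches what the paper implicitly invokes.
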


\medskip

We also make use of the logarithmic Hardy--Littlewood--Sobolev inequality to derive the energy estimate of the velocity field $u$.  

\medskip

\begin{prop}[Logarithmic Hardy--Littlewood--Sobolev Inequality \cite{Beckner1993, CL1992}]\thlabel{log-HLS}
Let $f$ be a nonnegative function in $L^1(\R^2)$ such that $f\log f $ and $f\log(1+|x|^2)$ belong to $L^1(\R^2)$. If $\int_{\R^2}f\, dx = M$, then
\[
\int_{\R^2} f\log f\, dx + \frac2M \int_{\R^2}\int_{\R^2} f(x) f(y) \log|x-y|\, dxdy \ge -C(M)
\]
with $C(M):= M(1+\log\pi - \log M)$.
\end{prop}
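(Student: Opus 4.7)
The plan is to establish the inequality via duality with the Moser--Trudinger--Onofri inequality on $\R^2$, identifying the extremal explicitly; an alternative route is the competing-symmetries argument of Carlen--Loss.

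First I would reduce to the case $M=1$. Setting $g = f/M$ so that $\int g\, dx = 1$, a direct substitution gives
\[
\int f\log f\, dx + \frac{2}{M}\iint f(x)f(y)\log|x-y|\, dx\, dy = M\left[\int g\log g\, dx + 2\iint g(x)g(y)\log|x-y|\, dx\, dy + \log M\right],
\]
so the claim reduces to
\[
\mathcal{J}[g] := \int g\log g\, dx + 2\iint g(x)g(y)\log|x-y|\, dx\, dy \ge -(1+\log\pi)
\]
for all probability densities $g$ on $\R^2$ with $g\log g$ and $g\log(1+|x|^2)$ in $L^1$. A formal Lagrange-multiplier analysis of the constrained minimization of $\mathcal J$ yields the Euler--Lagrange equation $\log g + 4\int g(y)\log|x-y|\, dy + 1 = \lambda$, which after taking $-\Delta$ becomes $-\Delta\log g = 8\pi g$ and is solved by translates and dilates of the Cauchy-type density $m_0(x) = \pi^{-1}(1+|x|^2)^{-2}$; a direct computation using the elementary integral $\int_1^\infty u^{-2}\log u\, du = 1$ and the conformal invariance of the double integral gives $\mathcal{J}[m_0] = -(1+\log\pi)$.

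Next I would invoke the Onofri inequality on $\R^2$,
\[
\log\int_{\R^2} e^{\phi}\, d\mu_0 \le \int_{\R^2}\phi\, d\mu_0 + \frac{1}{16\pi}\int_{\R^2}|\nabla\phi|^2\, dx, \qquad d\mu_0 := m_0\, dx,
\]
valid for smooth $\phi$ with $\nabla\phi\in L^2(\R^2)$ and $e^\phi\in L^1(d\mu_0)$, and dualize it through the Gibbs variational principle for relative entropy: for any probability density $g$,
\[
\int g\log\frac{g}{m_0}\, dx = \sup_{\phi}\left\{\int g\phi\, dx - \log\int e^{\phi}\, d\mu_0\right\}.
\]
Combined with Onofri, this gives, for every admissible $\phi$,
\[
\int g\log\frac{g}{m_0}\, dx \ge \int \phi(g - m_0)\, dx - \frac{1}{16\pi}\int|\nabla\phi|^2\, dx.
\]
Choosing $\phi$ to be a decay-renormalized form of the Newtonian potential $\psi(x) = -4\pi\int g(y)G(x-y)\, dy$ with $G(x) = -(2\pi)^{-1}\log|x|$, so that $-\Delta\phi = 4\pi(g-m_0)$ modulo conformal corrections, and using $\int|\nabla\phi|^2 = 4\pi\int\phi(g - m_0)$ together with the expansion $\int g\log(g/m_0) = \int g\log g + \log\pi + 2\int g\log(1+|x|^2)$, one matches terms to obtain exactly $\mathcal{J}[g]\ge -(1+\log\pi)$, with equality at $g = m_0$.

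The main obstacle is Onofri's inequality itself, which is a sharp geometric inequality traditionally proved via Moser--Trudinger estimates on $S^2$ together with conformal invariance, and the careful bookkeeping of the logarithmic divergence at infinity, since $\log|x-y|$ is unbounded and the double integral must be renormalized against a $\log(1+|x|^2)$ background -- this is precisely where the integrability hypothesis $f\log(1+|x|^2)\in L^1$ enters. To sidestep Onofri entirely one can run the Carlen--Loss competing-symmetries argument: introduce the stereographic Schwarz rearrangement $R$ and a M\"obius centering transformation $S$ on probability densities, verify that $\mathcal{J}[R g]\le\mathcal{J}[g]$ and $\mathcal{J}[S g]\le\mathcal{J}[g]$ with equality only at $m_0$ (using classical rearrangement inequalities for $R$ and the conformal invariance of $\iint gg\log|x-y|$ under $S$), and show that $(SR)^n g\to m_0$ in a sense strong enough to propagate the sharp lower bound $\mathcal{J}[m_0] = -(1+\log\pi)$ back to arbitrary $g$.
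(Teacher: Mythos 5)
The paper does not actually prove this proposition: it is quoted directly from Beckner \cite{Beckner1993} and Carlen--Loss \cite{CL1992}, and the two routes you propose (duality with the Euclidean Onofri inequality, and competing symmetries) are precisely the proofs given in those two references. Your scaling reduction to $\mathcal{J}[g]\ge-(1+\log\pi)$ for probability densities, the identification of the extremal $m_0(x)=\pi^{-1}(1+|x|^2)^{-2}$ through the Liouville equation $-\Delta\log m_0=8\pi m_0$, and the value $\mathcal{J}[m_0]=-(1+\log\pi)$ are all correct.

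However, the Onofri-duality step as written does not close, because of a factor of $2$ in your choice of test function. Writing $h=g-m_0$ and using $\int m_0(y)\log|x-y|\,dy=\frac12\log(1+|x|^2)$ together with $\iint m_0(x)m_0(y)\log|x-y|\,dx\,dy=\frac12$, the identity you are implicitly matching is
\[
\mathcal{J}[g]+(1+\log\pi)=\int_{\R^2} g\log\frac{g}{m_0}\,dx+2\iint h(x)h(y)\log|x-y|\,dx\,dy ,
\]
so you must prove $\int g\log(g/m_0)\,dx\ge -2\iint h(x)h(y)\log|x-y|\,dx\,dy$, and the right-hand side here is nonnegative since $h$ has mean zero. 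The Gibbs--Onofri bound gives $\int g\log(g/m_0)\,dx\ge\int\phi h\,dx-\frac1{16\pi}\norm{\na\phi}_2^2$; with your choice $-\Delta\phi=4\pi h$ one has $\int\phi h\,dx=\frac1{4\pi}\norm{\na\phi}_2^2$, so the right-hand side equals $\frac{3}{16\pi}\norm{\na\phi}_2^2=\frac34\bke{-2\iint hh\log|x-y|}$, which is strictly weaker than what is required. The correct (and optimal) choice is $-\Delta\phi=8\pi h$, the same multiplier as in the Liouville equation, for which $\int\phi h\,dx-\frac1{16\pi}\norm{\na\phi}_2^2=\frac1{16\pi}\norm{\na\phi}_2^2=-2\iint hh\log|x-y|$ exactly. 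With that correction, plus the routine approximation arguments justifying the integration by parts $\norm{\na\phi}_2^2=8\pi\int\phi h\,dx$ and the admissibility of $\phi$ in both Onofri and the entropy duality (which is where the hypothesis $f\log(1+|x|^2)\in L^1$ enters), your first argument becomes a complete proof; the competing-symmetries alternative you sketch is exactly the argument of \cite{CL1992} and needs no such fix.
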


\medskip

The logarithmic Hardy--Littlewood--Sobolev inequality has been broadly applied to the PKS system. It is the key ingredient to show the global-in-time existence in \cite{BDP-EJDE2006} and to characterize the blowup profile in \cite{BCM-CPAM2008}. To be more precise, it is used to derive an upper bound of the entropy from the dissipative free energy functional for the PKS system.

\medskip

As mentioned in the introduction, we employ the modified free energy functional $H[n,u]$ given in \eqref{eq-mfree}. The following lemma implies the equivalence of the free energy function $\mathcal{F}[n,u]$ and the modified one $H[n,u]$ under the weaker condition that $n_0\log(2+|x|)\in L^1$.

\medskip

\begin{lem}[\hspace{-0.125cm}$\text{\cite[Lemma 2.3]{NO-2016}}$]\thlabel{NO-lem2.3-2016}
Let $f$ be a nonnegative measurable function on a measurable set $\Omega$ in $\R^n$ satisfying $f\log(2+|x|)\in L^1(\Omega)$. Then 
\[
f\log f \in L^1(\Omega)\qquad \Leftrightarrow \qquad (1+f)\log(1+f)\in L^1(\Omega).
\]
In fact,
\[
\int_\Omega (1+f) \log(1+f)\, dx \le 2\int_\Omega f|\log f|\, dx + (2\log2) \int_\Omega f\, dx
\]
and
\[
\int_\Omega f|\log f|\, dx \le \int_\Omega (1+f) \log(1+f)\, dx + 2\al \int_\Omega f\log(2+|x|)\, dx + \frac1e \int_\Omega \frac1{(2+|x|)^\al}\, dx,
\]
where $n<\al<\infty$.
\end{lem}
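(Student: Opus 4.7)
The plan is to establish both inequalities as pointwise bounds in the integrands and then integrate over $\Omega$; the biconditional equivalence follows immediately from the two inequalities together with the hypothesis $f\log(2+|x|)\in L^1(\Omega)$ and the integrability of $(2+|x|)^{-\al}$ on $\R^n$ for $\al>n$.

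For the first inequality, I would prove the pointwise bound
\[
(1+f)\log(1+f) \le 2f|\log f| + (2\log 2)\, f \quad \text{for all } f \ge 0.
\]
When $f \ge 1$ this is immediate from $1+f \le 2f$ together with $\log(1+f) \le \log(2f) = \log 2 + \log f = \log 2 + |\log f|$. For $0 \le f \le 1$ a brief calculus check suffices: setting $g(f) := 2f|\log f| + (2\log 2)f - (1+f)\log(1+f)$, one has $g(0) = g(1) = 0$, $g'(0^+) = +\infty$ and $g'(1^-) = \log 2 - 3 < 0$, so $g$ has a unique interior critical point on $(0,1)$ and is nonnegative throughout. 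Integration then gives the first inequality.

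For the second inequality, I would split $\Omega = \{f \ge 1\} \cup \{f < 1\}$. On $\{f \ge 1\}$ we have $|\log f| = \log f \le \log(1+f)$, hence $f|\log f| \le (1+f)\log(1+f)$. On $\{f < 1\}$ the key step is the algebraic identity
\[
\log\frac{1}{f} = 2\al\, \log(2+|x|) + \log\frac{1}{f(2+|x|)^{2\al}},
\]
which yields $f|\log f| = 2\al\, f\log(2+|x|) + f\log\frac{1}{f(2+|x|)^{2\al}}$. The second term is nonpositive where $f(2+|x|)^{2\al} \ge 1$, while on the complementary set one applies the elementary inequality $-a\log a \le 1/e$ on $(0,1)$ with $a = f(2+|x|)^{2\al}$ to obtain
\[
f\log\frac{1}{f(2+|x|)^{2\al}} \le \frac{1}{e\,(2+|x|)^{2\al}} \le \frac{1}{e\,(2+|x|)^{\al}}.
\]
Summing the two regional contributions produces the stated second inequality.

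There is no serious obstacle beyond the careful bookkeeping of cases: the main delicate point is verifying the pointwise bound on $[0,1]$ in the first inequality (where one cannot just compare crude sizes), and choosing the weight power $2\al$ in the decomposition so that integrability of the tail $(2+|x|)^{-\al}$ on $\R^n$ is exactly governed by the hypothesis $\al > n$. The equivalence between $f\log f\in L^1$ and $(1+f)\log(1+f)\in L^1$ then follows by combining the two estimates with the assumed integrability of $f\log(2+|x|)$.
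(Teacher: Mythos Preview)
Your argument is correct. The paper does not supply its own proof of this lemma; it is quoted verbatim from \cite[Lemma~2.3]{NO-2016} and used as a black box, so there is nothing in the present paper to compare against. Your pointwise case-splitting approach is the standard one and matches what one finds in the original reference.

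One small remark on presentation: in the first inequality on $[0,1]$, the passage from ``$g'(0^+)=+\infty$, $g'(1^-)<0$'' to ``unique interior critical point, hence $g\ge0$'' is a bit telegraphic. The cleanest justification is that $g''(f)=-2/f-1/(1+f)<0$, so $g$ is strictly concave on $(0,1)$; a concave function vanishing at both endpoints lies above the chord and is therefore nonnegative on the interval. This closes the step without needing to count critical points.
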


\medskip

The following lemma is the key to reach the global existence \thref{NO-thm1.2} from \thref{NO-thm3.1}. It gives us a control of the $L^3_{t,x}$ norm of $n$ in the energy equality of the density $n$.

\medskip

\begin{lem}[\hspace{-0.125cm}$\text{\cite[Lemma 2.1 (2)]{NO-2011}}$]
For any $\ve>0$, there exists $C_\ve>0$ such that for $f\ge0$
\EQ{\label{NO-2011-lem2.1-1}
\norm{f}_2 \le \ve \norm{(1+f)\log(1+f)}_1^{\frac12} \norm{\na f}_2^{\frac12} + C(\ve) \norm{f}_1^{\frac12}
}
\EQ{\label{NO-2011-lem2.1-2}
\norm{f}_3 \le \ve \norm{(1+f)\log(1+f)}_1^{\frac13} \norm{\na f}_2^{\frac23} + C(\ve) \norm{f}_1^{\frac13}
}
where $C(\ve)=O((e^{\ve^{-3}}-1)^{\frac23})$ as $\ve\to0$.
\end{lem}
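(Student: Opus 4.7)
The plan is a truncation argument combined with a two-dimensional Gagliardo--Nirenberg interpolation, with the truncation level chosen to depend exponentially on $\ve^{-p}$. Set $A:=\|(1+f)\log(1+f)\|_1$, $B:=\|\na f\|_2$, and $D:=\|f\|_1$; it is enough to prove a single $p$th-power bound of the shape
\[
\|f\|_p^p \,\le\, C_0\, \ve^p\, A\, B^{p-1} + C(\ve)^p\, D, \qquad p=2,3,
\]
since $(x+y)^{1/p} \le x^{1/p}+y^{1/p}$ and universal constants may then be absorbed by a rescaling of $\ve$.

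For $K>0$ to be chosen, I split $\int f^p = \int_{\{f \le K\}} f^p + \int_{\{f > K\}} f^p$ and bound the first integral by $K^{p-1} D$. Introducing the truncation $g:=(f-K)_+ \in H^1(\R^2)$, the convexity inequality $(g+K)^p \le 2^{p-1}(g^p+K^p)$ on $\{f>K\}$ together with Chebyshev's estimate $|\{f>K\}| \le D/K$ yield
\[
\int_{\{f>K\}} f^p \,\le\, 2^{p-1}\|g\|_p^p + 2^{p-1} K^{p-1} D,
\]
so the task reduces to controlling $\|g\|_p^p$.

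For this I use the scaling-critical two-dimensional Gagliardo--Nirenberg inequality $\|g\|_p \le C \|\na g\|_2^{1-1/p}\|g\|_1^{1/p}$ (the interpolation between $L^1$ and $\dot H^1$ in $\R^2$), which combined with $\|\na g\|_2 \le B$ gives $\|g\|_p^p \le C B^{p-1}\|g\|_1$. The entropy enters through the pointwise bound
\[
(1+f)\log(1+f) \,\ge\, f\,\log(1+K) \qquad \text{on } \{f>K\},
\]
which integrates to $\|g\|_1 \le \int_{\{f>K\}} f \le A/\log(1+K)$.

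Assembling,
\[
\|f\|_p^p \,\le\, C\, K^{p-1} D + \frac{C\, A\, B^{p-1}}{\log(1+K)},
\]
and the choice $K = e^{c/\ve^p}-1$, with $c$ depending on the 2D Gagliardo--Nirenberg constant, makes the second term at most $C_0\,\ve^p A B^{p-1}$ and forces $C(\ve) = O(K^{(p-1)/p}) = O((e^{\ve^{-p}}-1)^{(p-1)/p})$. For $p=3$ this matches the claimed $O((e^{\ve^{-3}}-1)^{2/3})$, and the $p=2$ case is controlled a fortiori by the same $C(\ve)$. The one point requiring care is the pairing between the Gagliardo--Nirenberg exponent $\theta = 1-1/p$ available in $\R^2$ and the entropy-based $L^1$ control on the high part $g$; the remainder is bookkeeping.
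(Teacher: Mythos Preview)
The paper does not supply its own proof of this lemma; it is quoted verbatim from \cite{NO-2011} and used as a black box. Your truncation-plus-Gagliardo--Nirenberg argument is correct and is in fact the standard route to such inequalities: the decomposition at level $K$, the entropy bound $\int_{\{f>K\}} f \le A/\log(1+K)$, and the 2D interpolation $\|g\|_p \le C\|\nabla g\|_2^{1-1/p}\|g\|_1^{1/p}$ combine exactly as you describe, and the exponential choice of $K$ produces the stated growth of $C(\ve)$. The only cosmetic point is that the rescaling of $\ve$ to absorb the Gagliardo--Nirenberg constant should be made explicit if you want the asymptotic $C(\ve)=O((e^{\ve^{-3}}-1)^{2/3})$ to hold with the constant $1$ in the exponent rather than some $c>0$; otherwise the argument is complete.
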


\medskip

In order to control the free energy in the interior region, we need the following lemma concerning the Poisson equation in $\R^2$, which is a consequence of the Brezis--Merle inequality under the zero Dirichlet boundary condition.

\medskip

\begin{lem}[\hspace{-0.125cm}$\text{\cite[Lemma 2.7]{NO-2016}}$]\thlabel{NO-lem2.7}
Let $\Omega$ be a bounded domain in $\R^2$ with smooth boundary. For $g\in L^2(\Omega)$, let $v\in W^{2,2}(\Omega)$ be a solution of $-\De v = g$ in $\Omega$. If $\norm{g}_{L^1(\Omega)}<4\pi$, then
\[
\int_\Omega \exp\bke{|v(x)|}\, dx \le \frac{4\pi^2}{4\pi - \norm{g}_{L^1(\Omega)}}\, \textup{d}(\Omega)^2 \exp\bke{\sup_{\pd\Omega}|v(x)|}
\]
where $\textup{d}(\Omega)$ is the diameter of $\Omega$.
\end{lem}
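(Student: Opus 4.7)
The plan is to reduce the statement to the classical Brezis--Merle inequality by splitting off the boundary values. Write $v = v_0 + v_1$, where $v_0 \in W^{2,2}(\Omega) \cap W^{1,2}_0(\Omega)$ solves the Dirichlet problem
\[
-\Delta v_0 = g \quad \text{in } \Omega, \qquad v_0 = 0 \quad \text{on } \pd\Omega,
\]
and $v_1 := v - v_0$ is therefore harmonic in $\Omega$ with $v_1 = v$ on $\pd\Omega$. By the maximum principle applied to $\pm v_1$, one has $|v_1(x)| \le \sup_{\pd\Omega}|v|$ for every $x \in \Omega$, and hence pointwise
\[
\exp(|v(x)|) \le \exp(|v_0(x)|)\,\exp\bke{\sup_{\pd\Omega}|v|}.
\]

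Next I would invoke the Brezis--Merle inequality applied to the zero-boundary-value part $v_0$: for every $\delta \in (0, 4\pi)$,
\[
\int_\Omega \exp\bke{\frac{(4\pi - \delta)\,|v_0(x)|}{\norm{g}_{L^1(\Omega)}}}\, dx \le \frac{4\pi^2}{\delta}\, \textup{d}(\Omega)^2.
\]
The hypothesis $\norm{g}_{L^1(\Omega)} < 4\pi$ allows the choice $\delta := 4\pi - \norm{g}_{L^1(\Omega)} > 0$, which makes the exponent on the left collapse to exactly $|v_0(x)|$, giving
\[
\int_\Omega \exp(|v_0(x)|)\, dx \le \frac{4\pi^2}{4\pi - \norm{g}_{L^1(\Omega)}}\, \textup{d}(\Omega)^2.
\]
Integrating the pointwise inequality from the first paragraph over $\Omega$ and inserting this bound yields the desired estimate. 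The $L^2$ assumption on $g$ is only used to guarantee that the Dirichlet problem for $v_0$ admits a solution in $W^{2,2}(\Omega)$ so that the decomposition $v = v_0 + v_1$ is legitimate within the stated regularity class.

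The only real content is the Brezis--Merle inequality itself, which I would treat as a black box since it is a well-known result; its standard proof uses Schwarz symmetrization to reduce to the case when $\Omega$ is a disc and $g$ is radial, and then an explicit computation on the ball. If one had to reprove it, this symmetrization step would be the main obstacle. Granted Brezis--Merle, the argument above is essentially a two-line reduction via the superposition $v = v_0 + v_1$ and the maximum principle for the harmonic part.
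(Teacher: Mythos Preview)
Your argument is correct and is precisely the standard route to this estimate. The paper does not give its own proof of this lemma; it simply quotes it from \cite[Lemma~2.7]{NO-2016}, where the proof is exactly the decomposition $v=v_0+v_1$ into a zero Dirichlet part and a harmonic remainder, followed by the Brezis--Merle inequality for $v_0$ and the maximum principle for $v_1$, as you wrote.

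One small remark on your parenthetical: the original Brezis--Merle proof does not proceed via Schwarz symmetrization but rather by the pointwise bound $|v_0(x)|\le \frac{1}{2\pi}\int_\Omega \log\frac{\textup{d}(\Omega)}{|x-y|}\,|g(y)|\,dy$ (coming from the Green's function comparison) together with Jensen's inequality applied to the probability measure $|g(y)|\,dy/\norm{g}_1$. This is a quicker route if you ever need to reproduce the inequality itself, but it is irrelevant to the correctness of your reduction, which only uses Brezis--Merle as a black box.
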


\medskip

It follows from the following lemma that the $L^\infty$-norm of $\na c$ is controlled by the $L^q$-norm of $n$, $q\in(2,\infty)$.

\medskip

\begin{lem}[\hspace{-0.125cm}$\text{\cite[Lemma 2.5]{Nagai-2011}}$]\thlabel{Nagai2011-lem2.5}
Let $2<q\le\infty$. For $f\in L^1\cap L^q$,
\[
\norm{\na \frac1{2\pi}\, \log \frac1{|\cdot|} * f}_\infty \le C_q \norm{f}_1^{\frac{q-2}{2(q-1)}} \norm{f}_q^{\frac{q}{2(q-1)}},
\]
where
\[
C_q = (2\pi)^{1/2} \bke{\frac{q-1}{q-2}}^{1/2} \bket{\bke{\frac{q}{q-1}}^{\frac{q-2}{2(q-1)}} + \bke{\frac{q}{q-1}}^{-\frac{q}{2(q-1)}}}.
\]
\end{lem}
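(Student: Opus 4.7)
The plan is to use the explicit convolution representation together with a standard near/far decomposition. The Newtonian potential satisfies $\na\bke{\frac{1}{2\pi}\log\frac{1}{|\cdot|}*f}(x) = -\frac{1}{2\pi}\int_{\R^2}\frac{x-y}{|x-y|^2}\,f(y)\,dy$, so the task reduces to bounding the convolution of $f$ against the kernel $K(z)=-\frac{z}{2\pi|z|^2}$, whose modulus is $|K(z)|=\frac{1}{2\pi|z|}$. I introduce a cutoff radius $R>0$ to be chosen later and split the integral into the near region $\{|x-y|<R\}$ and the far region $\{|x-y|\ge R\}$, estimating the two pieces separately.

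For the near piece I apply H\"older's inequality with exponents $q$ and $q'=q/(q-1)$. Since $q>2$ gives $q'<2$, the function $|z|^{-q'}$ is integrable at the origin in $\R^2$, and a direct polar-coordinate computation yields $\int_{|z|<R}|z|^{-q'}\,dz=\frac{2\pi}{2-q'}R^{2-q'}$. Taking the $q'$-th root and using the identity $(2-q')/q'=(q-2)/q$, this gives a near-part bound of the form $A(q)\,R^{(q-2)/q}\norm{f}_q$ with an explicit $q$-dependent constant $A(q)$. For the far piece I bound $|K|$ pointwise by $\frac{1}{2\pi R}$ and pair with the $L^1$ norm of $f$, producing a contribution of $\frac{1}{2\pi R}\norm{f}_1$. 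Combining the two yields a bound, uniform in $x$, of the form $A(q)R^{(q-2)/q}\norm{f}_q + \frac{1}{2\pi}R^{-1}\norm{f}_1$.

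Finally I optimize in $R$ by differentiating and setting the derivative to zero. Because the exponents of $R$ in the two terms are $(q-2)/q$ and $-1$, the stationary point satisfies $R^{2(q-1)/q} \sim \norm{f}_1/\norm{f}_q$, i.e., $R_\ast = c(q)\bke{\norm{f}_1/\norm{f}_q}^{q/(2(q-1))}$. Substituting back produces precisely the interpolation exponents $\norm{f}_1^{(q-2)/(2(q-1))}\norm{f}_q^{q/(2(q-1))}$ in the statement. The main obstacle is then purely computational bookkeeping: one must track all $q$-dependent constants through the optimization carefully to reproduce the explicit form of $C_q$, where the two summands in the bracketed expression arise respectively from the near-part contribution $A(q)R_\ast^{(q-2)/q}\norm{f}_q$ and the far-part contribution $\frac{1}{2\pi R_\ast}\norm{f}_1$ evaluated at the optimizer. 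The endpoint $q=\infty$ is handled by the same split, replacing the $L^{q'}$ computation with the direct evaluation $\int_{|z|<R}|z|^{-1}\,dz=2\pi R$, and balancing $\norm{f}_\infty R$ against $\norm{f}_1/R$, which is consistent with sending $q\to\infty$ in the constant $C_q$.
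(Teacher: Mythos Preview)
The paper does not supply its own proof of this lemma; it is quoted verbatim from \cite[Lemma 2.5]{Nagai-2011} in Section~\ref{S-preliminary} and used as a black box. Your near/far decomposition with H\"older on $\{|x-y|<R\}$, the trivial $L^1$ bound on $\{|x-y|\ge R\}$, and optimization in $R$ is the standard argument and produces exactly the interpolation exponents $\norm{f}_1^{(q-2)/(2(q-1))}\norm{f}_q^{q/(2(q-1))}$; it is correct. One minor caveat on the bookkeeping you flag: the precise form of the bracketed factor in $C_q$ depends on \emph{which} $R$ one plugs in (true minimizer versus, say, the $R$ that equalizes the two terms), so if after tracking constants your bracket comes out with $q/(q-2)$ in place of $q/(q-1)$, that reflects a different but equally valid choice of $R_\ast$ rather than an error in the method.
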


\medskip

We denote for any real-valued function $f=f(x)$ that $f^+(x) = \max\{f(x),0\}$ and $f^-(x) = (-f)^+(x)$, so that $f=f^*-f^-$. In the same spirit as \cite[Lemma 2.2]{BCM-CPAM2008} where the negative part of the entropy is controlled by the second moment, we can control the negative part of the entropy by the weighted $L^1$ norm with the weight $(1+\log(1+|x|^2))$.

\medskip

\begin{lem}[Control of the negative part of the entropy]\thlabel{lem-neg-entropy}
For any $g$ such that $(1+\log(1+|x|^2)) g \in L^1_+(\R^2)$, we have $g\log g^- \in L^1(\R^2)$ and
\[
\int_{\R^2} g(x) \log^-g(x)\, dx
\le 2 \int_{\R^2} g(x) \log(1+|x|^2)\, dx + \log \pi \int_{\R^2} g(x)\, dx + \frac1e.
\]
\end{lem}

\begin{proof}
Let $u:= g \mathbbm{1}_{\{g\le1\}}$ and $m := \int_{\R^2} u\, dx \le M:=\int_{\R^2} g\, dx$. Then
\[
\int_{\R^2} u\bkt{\log u + 2\log(1+|x|^2)} dx
= \int_{\R^2} U \log U \mu\, dx - m \log \pi,
\]
where $U:= u/\mu$ and $\mu(x) = \frac1{\pi(1+|x|^2)^2}$ so that $\int_{\R^2} \mu\, dx =1$. By Jensen's inequality,
\[
\int_{\R^2} U\log U \mu\, dx \ge \bke{\int_{\R^2} U\, \mu dx} \log \bke{\int_{\R^2} U \mu\, dx} 
= m\log m 
\ge - \frac1e.
\]
So
\[
\int_{\R^2} u\bkt{\log u + 2\log(1+|x|^2)} dx
\ge - \frac1e - m\log \pi.
\]
Therefore,
\[
\int_{\R^2} g \log^- g\, dx 
= - \int_{\R^2} u \log u\, dx
\le 2 \int_{\R^2} u \log(1+|x|^2)\, dx + m \log \pi + \frac1e,
\]
completing the proof since $u\le g$ and $m\le M$.
\end{proof}

\bigskip

\section{Local-in-time existence of mild solutions}\label{S-mild-solution}

\medskip

We begin with the definition of the mild solution of the PKS--NS system \eqref{PKS-NS}.

\medskip

\begin{defn}[Mild Solution]
Given $(n_0,u_0) \in L^1\times L^2_{\si}$, we define $(n(t),u(t))$ to be a mild solution of the Patlak--Keller--Segel--Navier--Stokes system \eqref{PKS-NS} on $[0,T)$ with initial data $(n_0,u_0)$ if
\EN{
\item [(i)] $n \in C([0,T);L^1) \cap C((0,T); L^{4/3})$ and $u \in C([0,T);L^2) \cap C((0,T); L^4)$,
\item [(ii)] $\underset{0<t<T}{\sup} t^{1/4} \bke{\norm{n(t)}_{4/3} + \norm{u(t)}_4}< \infty$,
\item [(iii)] $(n,u)$ satisfies the integral equations
\EQ{\label{Duhamel-n}
n(t) = e^{t\De} n_0 - \int_0^t e^{(t-s)\De} \na \cdot (n(s)\na c(s) + n(s)u(s))\, ds,
}
\EQ{\label{Duhamel-u}
u(t) = e^{t\De} u_0 - \int_0^t e^{(t-s)\De} {\bf P}\na \cdot (\na c(s)\otimes \na c(s) + u(s) \otimes u(s))\, ds,
}
where $-\De c(s) = n(s)$.
}
\end{defn}

\medskip

In this section, we discuss the local existence and regularity of mild solutions to \eqref{PKS-NS}.

\medskip

Let us write \eqref{Duhamel-n}-\eqref{Duhamel-u} as
\[
(n(t),u(t)) = (e^{t\De} n_0,e^{t\De} u_0) - B((n,u),(n,u)),
\]
where $B=(B_1,B_2)$ is a bilinear form in which $B_1$ and $B_2$ are bilinear forms defined by
\[
B_1((m,v),(n,u)) := \int_0^t e^{(t-s)\De} \na \cdot  (m(s)\na c(s) + m(s) u(s))\, ds,
\]
\[
B_2((m,v),(n,u)) := \int_0^t e^{(t-s)\De} {\bf P} \na \cdot (\na b(s) \otimes \na c(s) + v(s) \otimes u(s))\, ds
\]
in which $-\De c(s) = n(s)$ and $-\De b(s) = m(s)$.

\medskip

By the Hardy--Littlewood--Sobolev inequality, for any $q\in(2,\infty)$
\EQ{\label{HLS}
\norm{\na c}_{q}\lec\norm{n}_{\frac{2q}{2+q}}.
}
We recall the classical $L^q$-$L^p$ estimates of heat and Stokes semigroups. For any $1\le q \le p \le \infty$ $(p\neq 1, q\neq\infty)$, there holds
\EQ{\label{semigp-est}
\norm{e^{t\De} f}_p + \norm{e^{t\De} {\bf P} f}_p &\lec t^{\frac1p - \frac1q} \norm{f}_q,\\
\norm{e^{t\De} \na\cdot F}_p + \norm{e^{t\De} {\bf P} \na\cdot F}_p &\lec t^{-\frac12 + \frac1p - \frac1q} \norm{F}_q.
}

\medskip

Let $\calE_T$ be a Banach space defined as
\EQN{
\calE_T = 
&\left\{
(n,u)\in L^\infty(0,T; L^1)\times L^\infty(0,T; L^2):\right.\\ 
&\qquad \left.t^{1/4}(n(\cdot,t), u(\cdot,t)) \in L^\infty(0,T; L^{4/3})\times L^\infty(0,T; L^{4})
\right\}
}
with the norm
\EQN{
\norm{(n,u)}_{\calE_T}:=& 
\sup_{t\in(0,T)} t^{1/4}\bke{\norm{n(\cdot,t)}_{4/3} + \norm{u(\cdot,t)}_{4}}
+
\sup_{t\in(0,T)}\bke{\norm{n(\cdot,t)}_{1} + \norm{u(\cdot,t)}_{2}}
\\
:=& \norm{(n,u)}_{X_T} + \norm{(n,u)}_{Y_T}.
}

\medskip

We first establish the local-in-time existence of mild solutions to \eqref{PKS-NS} and some important properties of the solutions in the following theorem.

\medskip

\begin{thm}\thlabel{local-exist}
Given $n_0\in L^1$ and $u_0\in L^2_\si$, there exists $T\in(0,\infty)$ such that the Cauchy problem \eqref{PKS-NS} has a unique mild solution $(n,u)$ on $[0,T)$. Moreover, $(n,u)$ satisfies the following properties:
\EN{
\item [(i)] $n(t)\to n_0$ in $L^1$ and $u(t)\to u_0$ in $L^2$;
\item [(ii)] for every $1\le p\le \infty$, there holds $n\in C((0,T];L^p)$ and $\sup_{0<t<T}(t^{1-1/p}\norm{n(t)}_p)<\infty$;
\item [(iii)] for every $1\le p\le 2$, there holds $u\in C((0,T];L^{\frac{2p}{2-p}})$ and $\sup_{0<t<T}(t^{1-1/p}\norm{u(t)}_{\frac{2p}{2-p}})<\infty$;
\item [(iv)] for every $m\in\mathbb{Z}_+$, $l\in\mathbb{Z}_+^2$ and $1<p<\infty$, there holds $\pd_t^m\pd_x^l n, \pd_t^m\pd_x^l u \in C((0,T];L^p)$;
\item [(v)] $(n,u)$ is a classical solution of \eqref{PKS-NS} in $\R^2\times(0,T)$;
\item [(vi)] If $n_0\ge0$ and $n_0\neq0$, then $n(x,t)>0$ for all $(x,t)\in\R^2\times(0,T)$;
\item [(vii)] If $n_0\log(1+|x|)\in L^1$, then $n(t)\log(1+|x|)\in L^1$ for all $0<t<T$.
}
\end{thm}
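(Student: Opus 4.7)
The plan is to construct the local mild solution by Banach contraction on $\calE_T$, then upgrade it via parabolic smoothing, the strong maximum principle, and a weighted test. I would write the system as the abstract fixed-point equation
\[
(n,u) = \Psi(n,u) := \bke{e^{t\De}n_0,\, e^{t\De}u_0} - B\bke{(n,u),(n,u)},
\]
and establish a bilinear bound $\norm{B((m,v),(n,u))}_{\calE_T}\lesssim \norm{(m,v)}_{\calE_T}\norm{(n,u)}_{\calE_T}$ with constant independent of $T$. The model estimate uses H\"older combined with HLS \eqref{HLS}: $\norm{n(\na c+u)}_1\lesssim\norm{n}_{4/3}(\norm{n}_{4/3}+\norm{u}_4)$, after which \eqref{semigp-est} reduces $t^{1/4}\norm{B_1}_{4/3}$ to a convergent Beta-integral $t^{1/4}\int_0^t(t-s)^{-3/4}s^{-1/2}ds$; the remaining components ($B_1$ in $L^1$, $B_2$ in $L^4$ and $L^2$, and the corresponding Lipschitz differences) are analogous. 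The linear part $\norm{(e^{t\De}n_0,e^{t\De}u_0)}_{\calE_T}$ is only bounded by $\norm{n_0}_1+\norm{u_0}_2$, not small; to overcome this I would use a density splitting $n_0 = n_0^{\textup{sm}} + n_0^{\textup{reg}}$ with $n_0^{\textup{reg}}\in L^{4/3}$ and $\norm{n_0^{\textup{sm}}}_1$ arbitrarily small (and similarly for $u_0\in L^2$), so that
\[
\sup_{0<t<T} t^{1/4}\norm{e^{t\De}n_0}_{4/3} \le C\norm{n_0^{\textup{sm}}}_1 + T^{1/4}\norm{n_0^{\textup{reg}}}_{4/3}
\]
can be driven below any prescribed $\ve$. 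A standard Picard iteration then yields the unique fixed point in a small ball of $\calE_T$, and (i) follows from the strong continuity of the heat and Stokes semigroups together with the vanishing of the Duhamel integrals in $L^1\times L^2$ as $t\to 0^+$.

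For (ii)--(v), I would bootstrap. The weighted $L^{4/3}$ and $L^4$ controls on $(n,u)$ give $\na c\in L^q$ for every finite $q>2$ after any positive delay, so iterating the Duhamel formula with $\norm{e^{(t-s)\De}\na\cdot F}_p\lesssim (t-s)^{-1/2-(1/q-1/p)}\norm{F}_q$ raises the integrability of $n$ (and of $u$, via the analogous Stokes bound) to any $p\in[1,\infty]$ in finitely many steps, producing the scalings $t^{1-1/p}\norm{n(t)}_p$ and $t^{1-1/p}\norm{u(t)}_{2p/(2-p)}$ claimed in (ii)--(iii). Once $n,u$ belong to $L^\infty$ after an arbitrary positive delay, Schauder-type parabolic estimates, or one further Duhamel iteration on H\"older norms, propagate to all space--time derivatives $\pa_t^m\pa_x^l n,\pa_t^m\pa_x^l u$, giving (iv) and hence the classical solvability in (v). For (vi), I would rewrite $\eqref{PKS-NS}_1$ as
\[
\pa_t n - \De n + (u + \na c)\cdot\na n = n^2 \ge 0
\]
(using $-\De c = n$); since $u+\na c$ is locally bounded and H\"older continuous on $\R^2\times(0,T)$ by (iv), the strong maximum principle for linear parabolic operators yields $n(x,t)>0$ throughout $\R^2\times(0,T)$ whenever $0\not\equiv n_0\ge 0$.

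The weighted bound (vii) I would obtain by testing $\eqref{PKS-NS}_1$ against $\log(1+|x|)$; using $\na\cdot u = 0$ and integration by parts gives
\[
\frac{d}{dt}\int_{\R^2} n\log(1+|x|)\,dx = \int_{\R^2} n(u + \na c)\cdot\na \log(1+|x|)\,dx + \int_{\R^2} n\, \De \log(1+|x|)\,dx.
\]
With $|\na\log(1+|x|)|\le 1$ and the $L^p$ bounds on $n$, $u$, $\na c$ from (ii)--(iii) (plus the local integrability of $\De\log(1+|x|)$), Gr\"onwall closes the estimate and the weighted $L^1$ bound of $n$ propagates from $t=0$. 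The main technical obstacle is the fine book-keeping in the bilinear estimate: the scaling-critical nonlinearity $\na\cdot(n\na c)$ rigidly fixes the exponents $(4/3,4)$ and the weight $t^{1/4}$ in $\calE_T$, and any other calibration breaks either the convergence of the relevant Beta-integrals or the density-based smallness of the linear part. Everything downstream is a standard, if laborious, bootstrap.
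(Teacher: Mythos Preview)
Your proposal is correct and follows essentially the same route as the paper: bilinear estimates in the scaling-critical space $\calE_T$ with smallness of the linear evolution obtained by density, then Picard iteration, then Duhamel bootstrap for (ii)--(iii), standard parabolic regularity for (iv)--(v), maximum principle for (vi), and a weighted test for (vii). Two minor cosmetic differences are worth noting: for (vi) the paper tests $\eqref{PKS-NS}_1$ against $n^-$ to get nonnegativity (then invokes the strong maximum principle), rather than rewriting the equation with source $n^2$; and for (vii) the paper tests against $\log(1+|x|^2)$, whose Laplacian is bounded ($|\De\log(1+|x|^2)|\le4$), which avoids the mild singularity of $\De\log(1+|x|)\sim|x|^{-1}$ near the origin and makes the bound a direct time integration (no Gr\"onwall is actually needed in either version, since the weighted integral does not reappear on the right).
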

\begin{proof}
By \eqref{semigp-est} and \eqref{HLS}, we have
\EQN{
\norm{B_1((m,v),(n,u))}_{4/3} 
&~ \lec \int_0^t (t-s)^{-\frac12+\frac34-1} \norm{m(s)\na c(s) + m(s)u(s)}_{1}\, ds\\
&~ \lec \int_0^t (t-s)^{-\frac34} \norm{m(s)}_{4/3}\norm{\na c(s) + u(s)}_{4}\, ds\\
&~ \lec \bke{\int_0^t (t-s)^{-\frac34} s^{-\frac14} s^{-\frac14}\, ds} \norm{(m,v)}_{X_T} \norm{(n,u)}_{X_T}\\
&~ \lec t^{-1/4} \norm{(m,v)}_{X_T} \norm{(n,u)}_{X_T}
}
and
\EQN{
\norm{B_2((m,v),(n,u))}_{4} 
&~ \lec \int_0^t (t-s)^{-\frac12+\frac14-\frac12} \norm{\na b(s)\otimes \na c(s) + v(s) \otimes u(s)}_{2}\, ds\\
&~ \lec \int_0^t (t-s)^{-\frac34} \bke{\norm{\na b(s)}_{4}\norm{\na c(s)}_{4} + \norm{v(s)}_{4}\norm{u(s)}_{4}} ds\\
&~ \lec \int_0^t (t-s)^{-\frac34} \bke{\norm{m(s)}_{4/3}\norm{n(s)}_{4/3} + \norm{v(s)}_{4}\norm{u(s)}_{4}} ds\\
&~ \lec \bke{\int_0^t (t-s)^{-\frac34} s^{-\frac14} s^{-\frac14}\, ds} \norm{(m,v)}_{X_T} \norm{(n,u)}_{X_T}\\
&~ \lec t^{-1/4} \norm{(m,v)}_{X_T} \norm{(n,u)}_{X_T}.
}
Thus, we have proved that
\EQ{\label{local-exist-bilinear}
\norm{B((m,v),(n,u))}_{X_T} \lec \norm{(m,v)}_{X_T} \norm{(n,u)}_{X_T}.
}
We claim that if $n_0\in L^1$ and $u_0\in L^2$, then
\[
\lim_{t\to0} t^{1/4} \bke{\norm{e^{t \De} n_0}_{4/3} + \norm{e^{t \De} u_0}_{4}}= 0.
\]
The limit holds if $n_0$ and $u_0$ are smooth and have a compact support. Therefore, the claim holds by a density argument. With this claim and the bilinear estimate \eqref{local-exist-bilinear}, the local existence of a solution $(n,u)\in X_T$ of \eqref{PKS-NS} follows from the Picard iteration. The solution $(n,u)$ is in $Y_T$, and hence a mild solution of \eqref{PKS-NS}, since
\[
\norm{n(t)}_1 = \int_{\R^2} n(x,t)\, dx = \int_{\R^2} e^{t\De} n_0(x)\, dx = \int_{\R^2} n_0(x)\, dx
\]
and 
\[
\norm{u(t)}_2 \le \norm{e^{t\De} u_0}_2 + \norm{B_2((n,u),(n,u))}_2 \lec \norm{u_0}_2 + \norm{(n,u)}_{X_T}^2.
\]

It is easy to see
\[
\norm{n(t) - n_0 }_1 \le \norm{e^{t\De} n_0 - n_0}_1 + \norm{\int_0^t e^{(t-s)\De}\na\cdot(n(s)\na c(s) + n(s) u(s))\, ds}_1 \to 0
\]
and
\[
\norm{u(t) - u_0 }_2 \le \norm{e^{t\De} u_0 - u_0}_2 + \norm{\int_0^t e^{(t-s)\De} {\bf P}\na\cdot(\na c(s)\otimes \na c(s) + u(s)\otimes u(s))\, ds}_1 \to 0,
\]
as $t\to0_+$, which proves (i).

From \eqref{Duhamel-n} and \eqref{Duhamel-u}, using \eqref{HLS}, we have for $1\le q\le p\le\infty$ $(p\neq1, q\neq\infty)$ that
\EQ{\label{local-exist-ii-n}
\norm{n(t)}_p 
&\lec \norm{e^{t\De} n_0}_p + \int_0^t (t-s)^{-\frac12+\frac1p-\frac1q} \norm{n(s)\na c(s) + n(s)u(s)}_q\, ds\\
&\lec t^{\frac1p-1} \norm{n_0}_1 + \int_0^t (t-s)^{\frac1p-\frac2a} \bke{\norm{n(s)}_a^2 + \norm{n(s)}_a \norm{u(s)}_{\frac{2a}{2-a}}}\, ds,
}
and for $1\le r\le \frac{2p}{2-p} \le\infty$ $(\frac{2p}{2-p} \neq1, r\neq\infty)$ that
\EQ{\label{local-exist-ii-u}
\norm{u(t)}_{\frac{2p}{2-p}}
&\lec \norm{e^{t\De} u_0}_{\frac{2p}{2-p}} + \int_0^t (t-s)^{-\frac12+\frac{2-p}{2p}-\frac1r} \norm{\na c(s)\otimes \na c(s) + u(s)\otimes u(s)}_r\, ds\\
&\lec t^{\frac1p-1} \norm{n_0}_2 + \int_0^t (t-s)^{\frac1p-\frac2a} \bke{\norm{n(s)}_a^2 + \norm{u(s)}_{\frac{2a}{2-a}}^2}\, ds,
}
where $a=\frac{4q}{2+q}=\frac{2r}{1+r}<2$ and $\frac1p-\frac2a>-1$. 

For $1\le p<2$, we have $1\le\frac{2p}{2-p}<\infty$. Taking $a=\frac43$ so that $q=1$, $r=2$, and $\frac1p-\frac2a>-1$, \eqref{local-exist-ii-n} and \eqref{local-exist-ii-u} become
\EQ{\label{local-exist-ii-1}
\norm{n(t)}_p + \norm{u(t)}_{\frac{2p}{2-p}} 
\lec t^{\frac1p-1} + \int_0^t (t-s)^{\frac1p-\frac32} s^{-\frac12}\, ds
\lec t^{\frac1p-1}\qquad\qquad \forall\, 1\le p<2.
}

For $p=2$, we choose $a\in(\frac43,2)$ in \eqref{local-exist-ii-n} and \eqref{local-exist-ii-u}, say $a=\frac53$ so that $q=\frac{10}7$, $r=5$, and $\frac1p-\frac2q=-\frac7{10}>-1$, and get
\EQN{
\norm{n(t)}_2 + \norm{u(t)}_{\infty} 
\lec t^{-\frac12} + \int_0^t (t-s)^{-\frac7{10}} s^{-\frac45}\, ds
\lec t^{-\frac12},
}
where we used \eqref{local-exist-ii-1} with $p=\frac53$. This completes the proof of (iii). 

For $2<p<\infty$, we choose $a=\frac{4p}{1+2p}\in(\frac85,2)$ in \eqref{local-exist-ii-n} so that $q=\frac{2p}{1+p}\le p$, and $\frac1p-\frac2a=\frac{1-2p}{2p}>-1$, and get
\EQN{
\norm{n(t)}_p 
\lec t^{\frac1p-1} + \int_0^t (t-s)^{-1+\frac1{2p}} s^{\frac1{2p}-1 }\, ds
\lec t^{\frac1p-1}\qquad\qquad \forall\, 2<p<\infty,
}
where we used \eqref{local-exist-ii-1} with $p\in(\frac85,2)$.

For $p=\infty$, from \thref{Nagai2011-lem2.5}, $\norm{\na c}_\infty \lec \norm{n}_1^{\frac14} \norm{n}_3^{\frac34}$. Therefore,
\EQN{
\norm{n(t)}_\infty 
&\lec t^{-1} \norm{n(t/2)}_1 + \int_{t/2}^t (t-s)^{-\frac12 - \frac13} \bke{\norm{\na c(s)}_\infty + \norm{u(s)}_\infty} \norm{n(s)}_3\, ds\\
&\lec t^{-1} \norm{n_0}_1 + \int_{t/2}^t (t-s)^{-\frac56} s^{-\frac76}\, ds\\
&\lec t^{-1},
}
proving (ii).

We omit the proof of (iv) and (v) as they follow from the technique used in the proof of \cite[Proposition 2.3]{Nagai-2011}. The positivity of mild solutions (vi) is a direct consequence of the strong maximum principle. In fact, the proof is identical to that of \cite[Proposition 2.7]{Nagai-2011} for PKS system in which the $n$-equation is tested against $n^-$, the negative part of the density $n$. In our case, when we test $\eqref{PKS-NS}_1$ against $n^-$, the additional term vanishes because $\int_{\R^2} u\cdot\na n\cdot n^-\, dx = \int_{\R^2} u\cdot\na n^-\cdot n^-\, dx=0$. We skip the full details and refer the interested reader to the proof of \cite[Proposition 2.7]{Nagai-2011}.

To show (vii), note that 
\[
|\na \log(1+|x|^2)| \le \frac{2|x|}{1+|x|^2} \le 1 \qquad \text{ and } \qquad
|\De \log(1+|x|^2)| = \frac4{(1+|x|^2)^2} \le 4.
\]
So
\EQN{
\frac{d}{dt} & \int_{\R^2} n \log(1+|x|^2)\, dx \\
& = - \int_{\R^2} u\cdot\na n\log(1+|x|^2)\, dx + \int_{\R^2} \De n \log(1+|x|^2)\, dx - \int_{\R^2} \na \cdot(n\na c) \log(1+|x|^2)\, dx\\
& = \int_{\R^2} n u\cdot\na\log(1+|x|^2)\, dx + \int_{\R^2} n \De \log(1+|x|^2)\, dx + \int_{\R^2} n\na c\cdot\na \log(1+|x|^2)\, dx\\
& \le \int_{\R^2} n|u|\, dx + 4M + \int_{\R^2} n|\na c| dx\\
& \le \norm{n}_{4/3} \norm{u}_4 + 4M + \norm{n}_{4/3} \norm{\na c}_4\\
& \le 4M + C(\norm{n_0}_1,\norm{u_0}_2)\, t^{-\frac12}.
}
Therefore
\EQ{\label{est-nlogx}
\int_{\R^2} n (t)\log(1+|x|^2)\, dx \le \int_{\R^2} n_0 \log(1+|x|^2)\, dx + 4Mt + C(\norm{n_0}_1,\norm{u_0}_2)\, t^{\frac12}.
}
\end{proof}

\medskip

\begin{remark}\thlabel{Nagai-2011-rmk2.1}
It follows the same argument as in \cite[Remark 2.1]{Nagai-2011} with the estimate of bilinear forms $B_1$ and $B_2$ that if $n_0\in L^1\cap L^p$ and $u_0\in L^2\cap L^{\frac{2p}{2-p}}$ for $4/3\le p<2$, then $n\in BC([0,T);L^1\cap L^p)$ and $u\in BC([0,T);L^2\cap L^{\frac{2p}{2-p}})$.
\end{remark}

\medskip

\begin{remark}
By \thref{free-energy}, the mild solution constructed in \thref{local-exist} is a free-energy solution because it is a classical solution.
\end{remark}

\bigskip

\section{A priori estimates for the forced Navier--Stokes equation}\label{S-apriori-NSE}

\medskip

In order to have a better control of the modified free energy $H[n,u]$ in Section \ref{S-apriori-PKS}, we derive the following energy estimate of $u$ using the free energy $\mathcal{F}[n,u]$ and the logarithmic Hardy--Littlewood--Sobolev inequality.

\medskip

\begin{lem}[A priori bound of $u$]\thlabel{u-a-priori}
Suppose that $(n_0,u_0)$ with $n_0$ satisfying \eqref{initial-n} and $u_0\in L^2_{\si}$, and $\norm{n_0}_1\le 8\pi$. Let $(n,u)$ be a mild solution on $[0,T)$ given in \thref{local-exist}. Then
\[
\frac12\, \norm{u(t)}_2^2 + \int_0^t \norm{\na u(\tau)}_2^2\, d\tau
\le C(\norm{n_0}_1, \norm{n_0\log n_0}_1, \norm{u_0}_2,T).
\]
The constant $C$ is independent of $T$ if $\norm{n_0}_1= 8\pi$.
\end{lem}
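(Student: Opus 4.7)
Since the mild solution from \thref{local-exist} is classical on $(0,T)$, integrating the identity in \thref{free-energy} yields the free-energy equality
\[
\mathcal F[n,u](t) + \int_0^t\!\int_{\R^2}\bigl[n|\nabla(\log n-c)|^2+|\nabla u|^2\bigr] dx\,d\tau = \mathcal F[n_0,u_0].
\]
Both dissipation integrals on the left are non-negative, so the task reduces to extracting $\tfrac12\|u(t)\|_2^2$ from a lower bound on $\mathcal F[n,u](t)$.

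The key ingredient will be \thref{log-HLS} applied to $n(t)$. Using $c=\tfrac{1}{2\pi}\log\tfrac1{|\cdot|}*n$ one has $-\tfrac12\int nc = \tfrac{1}{4\pi}\iint n(x)n(y)\log|x-y|\,dxdy$, and log--HLS rearranges to $\int n\log n - \tfrac{4\pi}{M}\int nc \ge -C(M)$. Setting $\alpha := M/(8\pi)\in(0,1]$, the convex decomposition
\[
\int n\log n - \tfrac12\int nc = (1-\alpha)\int n\log n + \alpha\Bigl(\int n\log n - \tfrac{4\pi}{M}\int nc\Bigr)
\]
yields
\[
\mathcal F[n,u](t) \ge (1-\alpha)\int n(t)\log n(t)\,dx - \alpha\,C(M) + \tfrac12\|u(t)\|_2^2.
\]

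At the critical mass $M=8\pi$ we have $\alpha=1$, so the $(1-\alpha)\int n\log n$ term drops and the free-energy equality forces
\[
\tfrac12\|u(t)\|_2^2 + \int_0^t\|\nabla u(\tau)\|_2^2\,d\tau \le \mathcal F[n_0,u_0] + C(8\pi),
\]
independent of $T$. In the subcritical regime $M<8\pi$ the factor $(1-\alpha)>0$ requires a lower bound on $\int n(t)\log n(t)\,dx$. The plan is to use the relative-entropy inequality (Jensen applied to the convex function $s\mapsto s\log s$) against the probability density $\phi_k(x):=\tfrac{k-1}{\pi}(1+|x|^2)^{-k}$ for a fixed $k>1$, obtaining
\[
\int n(t)\log n(t)\,dx \ge M\log\tfrac{M(k-1)}{\pi} - k\int n(t)\log(1+|x|^2)\,dx,
\]
and then bounding the weighted moment on the right by the propagation estimate \eqref{est-nlogx} in \thref{local-exist}(vii), which grows at most linearly in $t$.

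The expected main obstacle is the subcritical case, where the lower bound just derived introduces an explicit $T$-dependence through \eqref{est-nlogx}; this is consistent with the statement, since the constant $C$ is only claimed to be $T$-independent at the critical mass. As a preliminary, finiteness of $\mathcal F[n_0,u_0]$ on the right-hand side should be verified: the kinetic term is $\tfrac12\|u_0\|_2^2$ by \eqref{initial-u}, the entropy is $\|n_0\log n_0\|_1$ by \eqref{initial-n}, and the potential term is finite by log--HLS combined with a direct split of $\iint n_0(x)n_0(y)\log(1/|x-y|)\,dx\,dy$ across the regions $|x-y|<1$ and $|x-y|\ge 1$, using the $\log(1+|x|)$-moment assumption in \eqref{initial-n}.
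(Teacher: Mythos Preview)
Your argument is correct and follows the same overall architecture as the paper: integrate the free-energy identity, apply \thref{log-HLS} to split $\mathcal F[n,u](t)$ as $(1-\alpha)\int n\log n - \alpha\,C(M) + \tfrac12\|u\|_2^2$, and in the subcritical case bound $\int n(t)\log n(t)$ from below via the propagated moment \eqref{est-nlogx}. The one genuine technical difference is in that last lower bound. The paper re-invokes \thref{log-HLS} a second time, reducing the problem to an \emph{upper} bound on $\iint n(x,t)n(y,t)\log|x-y|\,dx\,dy$, which it obtains by a pointwise estimate on $c(x,t)$; your Jensen/relative-entropy argument against the reference density $\phi_k=\tfrac{k-1}{\pi}(1+|x|^2)^{-k}$ bypasses this detour and lands directly on $\int n\log n \ge M\log\tfrac{M(k-1)}{\pi} - k\int n\log(1+|x|^2)$. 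Both routes feed into \eqref{est-nlogx} in exactly the same way, so the conclusions and the $T$-dependence match; your version is slightly more self-contained since it avoids estimating $c$ pointwise. One small imprecision: in your verification that $\mathcal F[n_0,u_0]$ is finite, the region $|x-y|<1$ of the potential is controlled by the \emph{entropy} hypothesis (via Young's inequality $ab\le a\log a - a + e^b$ with $b=\log(1/|x-y|)$), not the $\log(1+|x|)$-moment; the moment handles only $|x-y|\ge1$.
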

\begin{proof}
By \thref{free-energy},
\[
\mathcal{F}[n,u](t) = \mathcal{F}[n_0,u_0] - \int_0^t \int_{\R^2} \bkt{n|\na(\log n - c)|^2 + |\na u|^2} dx.
\]
By \thref{log-HLS}, 
\[
\mathcal{F}[n_0,u_0] 
\ge \bke{1 - \frac{M}{8\pi}} \int_{\R^2} n(t) \log n(t)\, dx - \frac{M}{8\pi}\, C(M) + \frac12\, \norm{u(t)}_2^2 + \int_0^t \norm{\na u(\tau)}_2^2\, d\tau,
\]
or
\[
\frac12\, \norm{u(t)}_2^2 + \int_0^t \norm{\na u(\tau)}_2^2\, d\tau 
\le \mathcal{F}[n_0,u_0] - \bke{1 - \frac{M}{8\pi}}\int_{\R^2} n(x) \log n(x)\, dx + \frac{M}{8\pi}\, C(M).
\]
Our goals is to bound $\int_{\R^2} n\log n$ from below.
In fact, using \thref{lem-neg-entropy}
\EQN{
\int_{\R^2} n \log n\, dx 
&\ge - \int_{\R^2} n \log n^-\, dx\\
&\ge -2 \int_{\R^2} n \log(1+|x|^2)\, dx - M \log \pi - \frac1e\\
&\ge - 2 \int_{\R^2} n_0 \log(1+|x|^2)\, dx - 8 Mt - 2 C(\norm{n_0}_1,\norm{u_0}_2)\, t^{\frac12} - M \log \pi - \frac1e
}
where we used \eqref{est-nlogx} in the last inequality. We conclude that
\EQN{
\frac12\, &\norm{u(t)}_2^2 + \int_0^t \norm{\na u(\tau)}_2^2\, d\tau\\ 
& \le \mathcal{F}[n_0,u_0] - \bke{1 - \frac{M}{8\pi}}\int_{\R^2} n(x) \log n(x)\, dx + \frac{M}{8\pi}\, C(M)\\
& \le \mathcal{F}[n_0,u_0] + \bke{1 - \frac{M}{8\pi}} \bkt{ 2 \norm{n_0\log(1+|x|^2)}_1 + 8 MT + 2 C(\norm{n_0}_1,\norm{u_0}_2)\,T^{\frac12} + M \log \pi + \frac1e},
}
completing the proof.
\end{proof}

\medskip

\begin{remark}\thlabel{u-regularity}
By \cite[Lemma V.1.2.1 b)]{Sohr-book2001}
\EQN{
\bke{ \int_0^T \norm{u(\tau)}_q^s\, d\tau}^{1/s}
&\lec \frac12\,\bke{\underset{t\in[0,T)}{\text{ess\,sup}}\, \norm{u(t)}_2}^2 + \int_0^T \norm{\na u(\tau)}_2^2\, d\tau
}
for $2\le q<\infty$, $1\le s\le\infty$ satisfying $\frac2q + \frac2s = 1$. Therefore, \thref{u-a-priori} gives
\[
u\in L^s(0,T; L^q(\R^2)), \quad 2\le q<\infty, \quad 1\le s\le\infty, \quad \frac2q + \frac2s = 1
\]
with 
\[
\norm{u}_{L^s(0,T; L^q(\R^2))} \le C(\norm{n_0}_1, \norm{n_0\log n_0}_1, \norm{u_0}_2,T)
\]
in which the constant $C$ is independent of $T$ if $\norm{n_0}_1= 8\pi$.
\end{remark}

\medskip

\begin{remark}\thlabel{u-regularity-1}
If, additionally, $u_0\in W^{1,2}_{0,\si}(\R^2)$ then $\pd_t u$, $u\cdot\na u$ and $\na P$ are in $L^2(\R^2\times(0,T))$ by \cite[Theorem V.1.8.1]{Sohr-book2001}. Note that \cite[Theorem V.1.8.1]{Sohr-book2001} holds for the case $f=\na\cdot F$, $F\in L^4(\R^2\times(0,T))$. 
\end{remark}

\bigskip

\section{A priori entropy estimates for the advection-Patlak--Keller--Segel system}\label{S-apriori-PKS}

\medskip

This section is devoted to prove \thref{NO-thm3.1}, following a similar approach as in \cite{NO-2016} for PKS system. The key to derive the global-in-time a priori entropy estimate of $\eqref{PKS-NS}_1$-$\eqref{PKS-NS}_2$ is to split $\R^2$ into a ball (interior region), the complement of a larger ball (exterior region), and the annulus that connects the two regions. The main difficulties here for the PKS-NS system come from the coupling of velocity $u$ which is not present in \cite{NO-2016}, and extra efforts are needed to control the terms generated from $u$. We will use the regularity estimates of Navier-Stokes equation with forcing.

\medskip

\subsection{Entropy estimates for exterior regions}

\medskip

In this subsection, we obtain the a priori estimate for large $|x|$. To this end, we 
set 
\[
H_{\text{ext}}(t;R) := \int_{|x|\ge R} \bkt{(1+n(t))\log(1+n(t)) - n(t)} dx,
\]
and introduce the Littlewood--Paley partition of unity.

\medskip

\begin{defn}
Let $\phi(r)$ be a function in $C^\infty_0(0,\infty)$ such that $\phi(r)\in[0,1]$ and $\textup{supp}\,\phi\subset(1/2,2)$.
Then a family of the cut off functions $\{\Phi_k\}_{k=0}^\infty$ is called as the Littlewood--Paley partition of the unity in $\R^2$ if they satisfy 
\[
\Phi_k(x) = \phi(2^{-k}|x|),\qquad x\in \R^2, k=0,1,2,\ldots
\]
and $\sum_{k=0}^\infty \Phi_k\in C^\infty(0,\infty)$ with $\sum_{k=0}^\infty \Phi_k(x)=1$ for $|x|>1$ and $\textup{supp}\bke{\sum_{k=0}^\infty \Phi_k}\subset (1/2,\infty)$.
\end{defn}

\medskip

The Littlewood--Paley partition of unity $\{\Phi_k\}_{k=0}^\infty$ satisfies $\Phi_k\in C^\infty_0(\R^2)$, $\Phi_k(x)\in[0,1]$, 
\[
\textup{supp}\,\Phi_k\subset A_k := \{x\in\R^2: 2^{k-1}\le |x|\le 2^{k+1}\},
\] 
and
\[
|\na \Phi_k| \le C2^{-k}\Phi_k^{5/6},\qquad 
|\na \Phi_k^{1/2}|\le C2^{-k}\Phi_k^{1/3},\qquad
|\na^2\Phi_k|\le C2^{-2k}\Phi_k^{2/3},
\]
where $C>0$ is a constant independent of $k$.

\medskip

\begin{prop}\thlabel{NO-prop3.2}
There exists a sufficiently large $R_0$ depending only on $T$, $\norm{n_0}_1$, and $\norm{u_0}_2$ such that
\EQ{\label{NO-eq3.3}
\sup_{0\le t<T} H_{\text{ext}}(t;R_0) &+ \int_0^T \int_{|x|\ge R_0} \frac{|\na n(t)|^2}{1+n(t)}\, dxdt \\
&\qquad \le 2 H_{\text{ext}}(t;R_0/2) + C(\norm{n_0}_1, \norm{n_0\log n_0}_1, \norm{u_0}_2)T,
}
\EQ{\label{NO-eq3.4}
\int_0^T \int_{|x|\ge R_0} n^2(t)\, dxdt 
\le C \norm{n_0}_1 H_{\text{ext}}(t;R_0/2) + C(\norm{n_0}_1, \norm{n_0\log n_0}_1, \norm{u_0}_2)T.
}
\end{prop}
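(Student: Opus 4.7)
The plan is to localize the modified entropy to the Littlewood--Paley annuli $A_k=\{2^{k-1}\le|x|\le 2^{k+1}\}$ carrying the cut-offs $\Phi_k$, derive a differential inequality for $h_k(t):=\int_{\R^2}\Phi_k\,[(1+n)\log(1+n)-n]\,dx$, and sum over $k\ge k_0$ with $2^{k_0}\simeq R_0$, choosing $R_0$ large enough that every commutator is either absorbable by the dissipation or contributes only $O(T)$. The key departure from \cite{NO-2016} is the appearance of a new commutator generated by the transport term $u\cdot\na n$, which must be handled using the spacetime velocity regularity of \thref{u-regularity}.

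Testing $\eqref{PKS-NS}_1$ against $\Phi_k\log(1+n)$ and integrating by parts, the Laplacian yields the principal dissipation $\int\Phi_k\,|\na n|^2/(1+n)\,dx$ plus a commutator $\int\log(1+n)\,\na n\cdot\na\Phi_k\,dx$; the chemotactic drift, after writing $\frac{n}{1+n}\na n=\na[n-\log(1+n)]$, integrating by parts once more, and using $-\De c=n$, produces the source $\int\Phi_k\,n[n-\log(1+n)]\,dx$ plus two further commutators of the form $\int(\cdots)\,\na\Phi_k\cdot\na c\,dx$; and the transport term $u\cdot\na n$, thanks to $\na\cdot u=0$, contributes only the velocity commutator $E_k^{\rm adv}:=\int[(1+n)\log(1+n)-n]\,u\cdot\na\Phi_k\,dx$. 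Collecting,
\[
\tfrac{d}{dt}h_k + \int\Phi_k\tfrac{|\na n|^2}{1+n}\,dx \;=\; \int\Phi_k\,n[n-\log(1+n)]\,dx + R_k^{\rm diff}+R_k^{\rm drift}+E_k^{\rm adv}.
\]

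The next step is to bound each piece on the right. The chemotactic source is $\lesssim \int_{A_k}n^2$, controlled via a localized 2D Gagliardo--Nirenberg estimate $\norm{n}_{L^2(A_k)}^2\lec \norm{n}_{L^1(A_k)}\int\Phi_k\,|\na\sqrt{1+n}|^2\,dx+\text{l.o.t.}$; choosing $R_0$ large enough that $\norm{n(t)}_{L^1(|x|\ge R_0/2)}$ is small (using $\norm{n(t)}_1=M$ together with the tail bound \eqref{est-nlogx}), the source is absorbable by the dissipation. The commutators $R_k^{\rm diff},R_k^{\rm drift}$ are handled with the pointwise estimate $|\na\Phi_k|\le C2^{-k}\Phi_k^{5/6}$, Cauchy--Schwarz, and \thref{Nagai2011-lem2.5} to bound $\norm{\na c}_\infty$ in terms of $\norm{n}_1$ and $\norm{n}_q$. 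The new term is bounded by H\"older in space and time with the Prodi--Serrin pair $(q,s)=(4,4)$ admitted by \thref{u-regularity}:
\[
\int_0^T|E_k^{\rm adv}|\,dt \;\lec\; 2^{-k}\,\norm{u}_{L^4((0,T)\times\R^2)}\,\norm{\Phi_k^{5/6}\,[(1+n)\log(1+n)-n]}_{L^{4/3}((0,T)\times\R^2)},
\]
and the last factor is again interpolated via localized Gagliardo--Nirenberg between $\norm{1+n}_{L^1(A_k)}$ and $\int\Phi_k\,|\na\sqrt{1+n}|^2\,dx$, leaving only remainders controlled by $C(\norm{n_0}_1,\norm{n_0\log n_0}_1,\norm{u_0}_2)\,T$.

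Summing the resulting differential inequality over $k\ge k_0$, the $2^{-k}$-decay in every commutator gives a convergent geometric series, the dissipation is partly absorbed on the left, and using $\sum_{k\ge k_0}\Phi_k(x)\le\chi_{\{|x|\ge R_0/2\}}$ to identify the initial datum piece with $H_{\rm ext}(0;R_0/2)$, time integration produces \eqref{NO-eq3.3}. The $L^2$-estimate \eqref{NO-eq3.4} then follows by applying the same localized Gagliardo--Nirenberg $\norm{n(t)}_{L^2(A_k)}^2\lec \norm{n(t)}_{L^1(A_k)}\int_{A_k}|\na n|^2/(1+n)\,dx$ on each $A_k$ and summing against the bound just established. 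The main obstacle is $E_k^{\rm adv}$: since there is no pointwise control of $u$, one is forced to invest the Prodi--Serrin regularity from \thref{u-a-priori}--\thref{u-regularity} and calibrate the H\"older exponents carefully so that the entropy factor $(1+n)\log(1+n)$ is reabsorbable by the dissipation $\int\Phi_k\,|\na n|^2/(1+n)$ rather than blowing the estimate up.
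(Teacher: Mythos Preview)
Your global scheme matches the paper's: localize to the annuli $A_k$, derive the differential inequality for $h_k$, absorb the chemotactic source via localized Gagliardo--Nirenberg together with smallness of $\sup_t\int_{A_k}n(t)\,dx$ for large $k$, control $E_k^{\rm adv}$ through \thref{u-regularity}, and sum over $k\ge k_0$. For $E_k^{\rm adv}$ the paper uses the pair $(q,s)=(3,6)$ pointwise in time rather than your $(4,4)$ spacetime H\"older: after H\"older and Young one gets $E_k^{\rm adv}(t)\le\int_{A_k}[(1+n)\log(1+n)-n]^{3/2}\Phi_k\,dx+C2^{-3k}\|u(t)\|_3^3$, the first piece is reduced exactly as term $IV$ in \cite{NO-2016} to $\bke{\int_{A_k}n}\int\Phi_k\,|\nabla n|^2/(1+n)+\text{l.o.t.}$, and then $\int_0^T\|u\|_3^3\,dt\le\frac12\int_0^T\|u\|_3^6\,dt+\frac T2$. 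Your variant is plausibly workable, but the interpolation of $\norm{[(1+n)\log(1+n)-n]\Phi_k^{5/6}}_{L^{4/3}_{t,x}}$ back into the time-integrated dissipation is more delicate than the pointwise route and should be checked explicitly.

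There is, however, a genuine gap in your handling of $R_k^{\rm drift}$. You propose to bound $\|\nabla c\|_\infty$ via \thref{Nagai2011-lem2.5}, but that lemma requires $\|n\|_q$ for some $q>2$, and no such bound is available at this stage in terms of the admissible data $\|n_0\|_1,\|n_0\log n_0\|_1,\|u_0\|_2,T$; the entropy estimate you are proving is precisely the first step toward any higher integrability of $n$, so this is circular. The paper (following \cite{NO-2016}) sidesteps this entirely by subtracting the mean: with $c_m(t)=c(t)-[c(t)]_{B_{2^{k+2}}}$ one has $\nabla c_m=\nabla c$, so the equation is unchanged, and after one further integration by parts the drift commutators carry $c_m$ rather than $\nabla c$. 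Then \thref{BMO-est} and \thref{BMO-def-est} give $\|c_m\|_{L^p(B_{2^{k+2}})}\lesssim\|c\|_{BMO}\lesssim\|n(t)\|_1=\|n_0\|_1$, closing the estimate using only $L^1$ information on $n$; this is the origin of the powers of $\|c\|_{BMO}$ appearing in the bounds on $I$--$IV$. The BMO substitution is the essential ingredient you are missing.
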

\begin{proof}
For fixed $k$, we let $c_m(t) = c(t) - [c(t)]_B$, where $[c(t)]_B = (1/|B|) \int_B c(t)\,dx$, $B=B_{2^{k+2}}(0)$. The equation exchanging $c(t)$ into $c_m(t)$ also holds, that is, $-\De c_m = n$. Note that by the equation of $n$,
\[
\pd_t \bket{\bkt{(1+n)\log(1+n)-n}\Phi_k} = 
- u\cdot\na n \log(1+n) \Phi_k
+ (\De n)\log(1+n)\Phi_k - \na\cdot (n\na c)\log(1+n) \Phi_k.
\]
Since
\EQN{
(\De n)\log(1+n)\Phi_k 
&= 
\nabla\cdot\bkt{\nabla n \log(1+n)\Phi_k 
- ((1+n)\log(1+n)-n)\nabla \Phi_k} \\
&\quad + ((1+n)\log(1+n)-n)\Delta \Phi_k 
- \frac{|\nabla n|^2}{1+n}\Phi_k
}
and
\EQN{
-\nabla\cdot(n\nabla c_m)\log(1+n) \Phi_k
&=
- \na\cdot\bkt{n\log(1+n)(\na c_m)\Phi_k}
+ n\log(1+n)(\na c_m)\cdot \na\Phi_k\\
&\quad + n\na\log(1+n)\cdot(\na c_m)\Phi_k,
}
we have
\EQN{
\pd_t &\bket{\bkt{(1+n)\log(1+n)-n}\Phi_k} + \frac{|\na n|^2}{1+n}\, \Phi_k\\
& = - u\cdot\na n \log(1+n) \Phi_k \\
& \quad + \na\cdot\bket{\na n\log(1+n)\Phi_k - \bkt{(1+n)\log(1+n) - n}\na\Phi_k - n\log(1+n)(\na c_m)\Phi_k}\\
& \quad + \bkt{(1+n)\log(1+n) - n}\De\Phi_k + n\log(1+n)\na c_m\cdot\na\Phi_k\\
& \quad + n\na\log(1+n)\cdot(\na c_m)\Phi_k.
}
We then have
\EQN{
\frac{d}{dt} \int_{\R^2} \bket{\bkt{(1+n)\log(1+n) - n}\Phi_k}& dx + \int_{\R^2} \frac{|\na n|^2}{1+n}\, \Phi_k\, dx\\
& = - \int_{\R^2} u\cdot\na n \log(1+n) \Phi_k\, dx + I + II + III + IV,
}
where
\EQN{
| I | &+ | II | + | III | + | IV |\\
& \le \bke{9\ve + C\int_{A_k} n(t)\, dx} \int_{\R^2} \frac{|\na n(t)|^2}{1+n(t)}\,\Phi_k\, dx\\
& \quad + C2^{-2n}(\norm{n_0}_1 + \norm{n_0}_1^2) + C(1+\ve)^2 \int_{\R^2} n(t)\Phi_k\, dx\\
& \quad + C \ve^{-5} 2^{-4n} \norm{c(t)}_{\text{BMO}}^6 + C2^{-4n} \norm{c(t)}_{\text{BMO}}^3 + C2^{-4n},
}
where $\ve$ is an arbitrary number with $0<\ve<1$ (see \cite[(3.20)]{NO-2016}). Now, using the fact that $\na\cdot u=0$ and the similar computation for $IV$ on page 87 of \cite{NO-2016}, we get
\EQN{
- \int_{\R^2} u\cdot\na n& \log(1+n) \Phi_k\, dx\\
=& - \int_{\R^2} u\cdot\na\bkt{(1+n)\log(1+n) - n}\Phi_k\, dx\\
=& \int_{\R^2} u\cdot\na\Phi_k \bkt{(1+n)\log(1+n) - n} dx\\
\le&~ C2^{-k} \int_{A_k} |u| \bkt{(1+n)\log(1+n) - n} \Phi_k^{5/6}\, dx\\
\le&~ C2^{-k} \bke{\int_{A_k} \bkt{(1+n)\log(1+n) - n}^{3/2} \Phi_k^{5/4}\, dx}^{2/3} \bke{\int_{A_k} |u|^3\, dx}^{1/3}\\
\le&~ \int_{A_k} \bkt{(1+n)\log(1+n) - n}^{3/2} \Phi_k\, dx + C2^{-3k}\norm{u}_3^3\\
\le&~ \bke{\int_{A_k}n(t)\,dx} \bke{\int_{\R^2} \frac{|\na n(t)|^2}{1+n(t)}\,\Phi_k\, dx} \\
&~+ C\ve 2^{-2k}\bke{\int_{\R^2}n_0\,dx}^2 + C\ve(1+\ve)\int_{\R^2}n(t)\Phi_k\,dx + C2^{-3k}\norm{u}_3^3.
}
By the same argument as in the proof of \cite[Proposition 3.2]{NO-2016} and taking $\ve = 1/36$, we can choose $N$ sufficiently large such that for all $k\ge N$,
\[
9\ve + (C+1) \sup_{0\le t<T} \int_{A_k} n(t)\, dx \le \frac12,
\]
and thus
\EQN{
&\frac{d}{dt} \int_{\R^2} \bket{\bkt{(1+n)\log(1+n) - n}\Phi_k} dx + \frac12 \int_{\R^2} \frac{|\na n|^2}{1+n}\, \Phi_k\, dx\\
&\le C(1+\ve)^2 \int_{\R^2} n(t)\Phi_k\, dx + C2^{-2k}\bke{\norm{c(t)}_{\text{BMO}}^6 + \norm{c(t)}_{\text{BMO}}^3 + \norm{c(t)}_{\text{BMO}}^2 + \norm{c(t)}_{\text{BMO}} + 1}\\
&\quad + C2^{-3k}\norm{u}_3^3\\
&\le C(1+\ve)^2 \int_{\R^2} n(t)\Phi_k\, dx + C2^{-2k}\bke{\norm{n_0}_1^6 + \norm{n_0}_1^3 + \norm{n_0}_1^2 + \norm{n_0}_1 + 1} + C2^{-3k} \norm{u}_3^3
}
since $\norm{c(t)}_{\text{BMO}} \lec \norm{n(t)}_1 = \norm{n_0}_1$ by \thref{BMO-est}. Thus, we have
\EQ{\label{NO-eq3.23}
\int_{\R^2} &\bkt{(1+n(t))\log(1+n(t)) - n(t)}\Phi_k\, dx + \frac12 \int_0^t \int_{\R^2} \frac{|\na n|^2}{1+n}\,\Phi_k\, dxds\\
&\le \int_{\R^2} \bkt{(1+n_0)\log(1+n_0) - n_0}\Phi_k\, dx + C\int_0^T\int_{\R^2} n(t)\Phi_k\, dx\\
&\quad + C2^{-2k}\bke{\norm{n_0}_1^6 + \norm{n_0}_1^3 + \norm{n_0}_1^2 + \norm{n_0}_1 + 1}T + C2^{-3k} \int_0^T \norm{u(t)}_3^3\, dt.
}
Note that we have $\norm{u}_{L^6(0,T;L^3(\R^2))}\le C(\norm{n_0}_1, \norm{n_0\log n_0}_1, \norm{u_0}_2)$ by \thref{u-regularity}. Then
\EQ{\label{est-u33}
\int_0^T \norm{u(t)}_3^3\, dt \le \int_0^T \bke{\frac{\norm{u(t)}_3^6}2 + \frac12} dt \le C(\norm{n_0}_1, \norm{n_0\log n_0}_1, \norm{u_0}_2)\, T.
}
Adding \eqref{NO-eq3.23} for $k\ge N$ and using $\sum_{k=N}^\infty \Phi_k(x)=1$ for $|x|\ge2^N$, we have
\EQN{
&\int_{|x|\ge 2^N} \bkt{(1+n(t))\log(1+n(t)) - n(t)} dx + \frac12 \int_0^t \int_{|x|\ge 2^N} \frac{|\na n|^2}{1+n}\, \Phi_k\, dxds\\
&\le \int_{|x|\ge 2^{N-1}} \bkt{(1+n_0)\log(1+n_0) - n_0} dx + C\bke{\norm{n_0}_1^6 + \norm{n_0}_1^3 + \norm{n_0}_1^2 + \norm{n_0}_1 + 1}T\\
&\quad + C(\norm{n_0}_{1}, \norm{n_0\log n_0}_1,\norm{u_0}_2)\, T,
}
proving \eqref{NO-eq3.3} with $R_0=2^N$. Next, we have (see \cite[(3.24)]{NO-2016})
\EQ{\label{NO-eq3.24}
\int_{\R^2} n^2(t) \Phi_k\, dx \le 2\norm{n_0}_1 \bke{\int_{\R^2} \frac{|\na n|^2}{1+n}\, \Phi_k\, dx} + C2^{-2k}\norm{n_0}_1^2 + 4\int_{\R^2} n(t)\Phi_k\, dx.
}
Using \eqref{NO-eq3.23} to the first term on the right hand side of \eqref{NO-eq3.24} and adding these for all $k\ge N$, we conclude that
\EQN{
\int_0^T \int_{|x|\ge 2^N} n^2\, dxdt &\le 2 \norm{n_0}_1 \int_0^T \int_{|x|\ge 2^N} \frac{|\na n|^2}{1+n}\, dxdt + C(\norm{n_0}_1^2+\norm{n_0}_1)T\\
& \le 4 \norm{n_0}_1 \int_{|x|\ge 2^{N-1}} \bkt{(1+n_0)\log(1+n_0) - n_0} dx\\
& \quad + C \norm{n_0}_1 \bke{\norm{n_0}_1^6 + \norm{n_0}_1^3 + \norm{n_0}_1^2 + \norm{n_0}_1 + 1}T\\
& \quad + C(\norm{n_0}_{1}, \norm{n_0\log n_0}_1, \norm{u_0}_2) \norm{n_0}_1\,T 
+ C(\norm{n_0}_1^2+\norm{n_0}_1)\,T.
}
This proves \eqref{NO-eq3.4}.
\end{proof}

\medskip

\subsection{A priori estimates for exterior regions}\label{NO-sec3.2}

Let $R_0>0$ be given in \thref{NO-prop3.2}, and choose it larger so that $n_0(x)\le 1$ for $|x|>R_0$. Then
\[
\sup_{0<t<T} \int_{|x|\ge R_0} (1+n(t)) \log(1+n(t))\, dx \le C(n_0,u_0,T),
\]
\[
\int_0^T \int_{|x|\ge R_0} \frac{|\na n|^2}{1+n}\, dxdt \le C(n_0,u_0,T),
\]
\[
\int_0^T \int_{|x|\ge R_0} n^2\, dxdt \le C(n_0,u_0,T),
\]
and for $R\ge R_0$ and $p\in[1,\infty)$
\[
\int_{|x|\ge R} n_0^p(x)\,dx \le \int_{|x|\ge R} n_0(x)\,dx.
\]
For $R>1$, let $\Phi_R\in C^\infty(\R^2)$ be such that $\Phi_R(x)\in[0,1]$, $\Phi_R(x)=1$ for $|x|\ge R$, $\textup{supp}\,\Phi_R\subset\R^2\setminus \overline{B_{R/2}(0)}$,
and
\[
|\na \Phi_R| \le CR^{-1}\Phi_R^{5/6},\qquad 
|\na(\Phi_R^{1/2})|\le CR^{-1}\Phi_R^{1/3},\qquad
|\na^2\Phi_R| \le CR^{-2}\Phi_R^{2/3},
\]
where $C>0$ is a constant independent of $R$. Then
\[
\textup{supp}\,(\na \Phi_R) \subset A_R^*:=\{x\in\R^2: R/2\le |x|\le R\}.
\]

\medskip

\begin{lem}\thlabel{NO-lem3.4}
For any $R\ge 2R_0$,
\EQ{\label{NO-eq3.28}
\sup_{0<t<T} \int_{|x|\ge R} n^2(t)\, dx + \frac12 \int_0^T \int_{|x|\ge R} |\na n|^2\, dxdt
\le C(\norm{n_0}_1,\norm{n_0\log n_0}_1,\norm{u_0}_2,T,R).
}
\end{lem}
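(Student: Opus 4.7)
\emph{Proof plan.} The strategy is to adapt the weighted energy argument of \cite[Lemma 3.4]{NO-2016} from the PKS system to the PKS--NS setting; the principal new difficulty is the advection term produced by the coupling with the fluid velocity $u$, which is handled via the spacetime regularity of $u$ established in \thref{u-a-priori} and \thref{u-regularity}.

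Multiply $\eqref{PKS-NS}_1$ by $n\Phi_R^2$ and integrate over $\R^2$. After integrating the transport term by parts using $\na\cdot u=0$ and closing the chemotactic term using $-\Delta c=n$, one obtains the weighted $L^2$-energy identity
\begin{align*}
\tfrac12\,\tfrac{d}{dt}\int n^2\Phi_R^2\,dx+\int|\na n|^2\Phi_R^2\,dx &= \tfrac12\int n^3\Phi_R^2\,dx+\tfrac12\int n^2\Delta(\Phi_R^2)\,dx\\
&\quad+\tfrac12\int n^2\na c\cdot\na(\Phi_R^2)\,dx+\tfrac12\int n^2 u\cdot\na(\Phi_R^2)\,dx.
\end{align*}
The last three ``boundary-supported'' terms are localized on $A_R^*$, where $|\na(\Phi_R^2)|\lec R^{-1}$ and $|\Delta(\Phi_R^2)|\lec R^{-2}$. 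Their time integrals are controlled as follows: the $\Delta(\Phi_R^2)$-term directly by \eqref{NO-eq3.4}; the $\na c$-term via H\"older together with the Hardy--Littlewood--Sobolev estimate $\|\na c\|_4\lec\|n\|_{4/3}$ and \eqref{NO-eq3.4}; and the new $u$-term by H\"older on $A_R^*$ combined with the regularity $u\in L^4(0,T;L^4(\R^2))$ supplied by \thref{u-regularity} together with the $L^2_{t,x}$ bound \eqref{NO-eq3.4}.

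The principal nonlinear contribution is the cubic self-interaction $\tfrac12\int n^3\Phi_R^2$. Applying the interpolation inequality \eqref{NO-2011-lem2.1-2} to $f=n\eta$, where $\eta$ is a smooth cutoff equal to $1$ on $\textup{supp}\,\Phi_R$ and supported in $\{|x|\ge R_0\}$, yields
\[
\int n^3\Phi_R^2\,dx\le\|n\eta\|_3^3\lec\ve\,\|(1+n)\log(1+n)\|_{L^1(|x|\ge R_0)}\,\|\na(n\eta)\|_2^2+C(\ve)\|n_0\|_1,
\]
where the entropy factor is uniformly bounded in $t$ by \eqref{NO-eq3.3}. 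The gradient factor satisfies $\|\na(n\eta)\|_2^2\lec\int\eta^2|\na n|^2\,dx+CR_0^{-2}\int_{|x|\ge R_0}n^2\,dx$; choosing $\ve$ small and iterating the argument through a finite chain of nested cutoffs between the radii $R_0$ and $R$ (each step uses the previously obtained $L^2_{t,x}$-bound of $\na n$ on the larger region) allows the diffusion term to be absorbed into the left-hand side at every step.

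Integrating the resulting differential inequality in $t\in[0,T)$, combining the estimates above, and observing that $n_0\le 1$ on $\textup{supp}\,\Phi_R$ so that $\int n_0^2\Phi_R^2\le\|n_0\|_1$, we arrive at \eqref{NO-eq3.28}. The main obstacle compared to \cite[Lemma 3.4]{NO-2016} is the advection remainder $\tfrac12\int n^2 u\cdot\na(\Phi_R^2)$; its control relies crucially on the spacetime $L^s_tL^q_x$-regularity of $u$ (with $2/s+2/q=1$) produced by \thref{u-a-priori} and \thref{u-regularity}, which in turn rests on the critical/subcritical mass assumption via the logarithmic Hardy--Littlewood--Sobolev inequality.
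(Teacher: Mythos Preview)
Your proposal follows the right template---a weighted $L^2$ energy identity for $n$ plus an interpolation to absorb the cubic self-interaction---but the execution of the cubic step does not close, and there is a secondary issue in the $\na c$ boundary term.

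\textbf{The cubic term.} Applying the global inequality \eqref{NO-2011-lem2.1-2} to $f=n\eta$ with a cutoff $\eta\equiv1$ on $\textup{supp}\,\Phi_R$ forces $\textup{supp}\,\eta$ to be \emph{strictly larger} than $\textup{supp}\,\Phi_R$. The resulting gradient factor $\int\eta^2|\na n|^2$ therefore lives on a larger set than the diffusion term $\int\Phi_R^2|\na n|^2$ on the left, so it cannot be absorbed. Your iteration does not rescue this: marching the cutoffs outward toward $R_0$, the base step would still require an $L^2_{t,x}$ bound on $\na n$ over a region reaching inside (or at least up to) $\{|x|\ge R_0\}$, and no such bound is available---\thref{NO-prop3.2} only yields $\int_0^T\!\int_{|x|\ge R_0}|\na n|^2/(1+n)$, not $\int|\na n|^2$. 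The paper circumvents this by quoting the genuinely \emph{localized} inequality \cite[(3.31)]{NO-2016}, which bounds $\int n^3\Phi_R$ by
\[
\Big(\ve\!\int_{A_R^*}(1+n)\log(1+n)\,dx\Big)\int|\na n|^2\Phi_R\,dx\; +\;C\Big(\int n^{3/2}|\na\Phi_R^{1/2}|\Big)^2+C(\ve)\norm{n_0}_1,
\]
with the \emph{same} weight $\Phi_R$ on the gradient. Absorption is then immediate once $\ve$ is chosen so that $\ve\sup_t\int_{A_R^*}(1+n)\log(1+n)=\tfrac12$, and the commutator term is controlled by \eqref{NO-eq3.4}.

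\textbf{The $\na c$ boundary term.} The HLS bound $\norm{\na c}_4\lec\norm{n}_{L^{4/3}(\R^2)}$ needs the \emph{global} $L^{4/3}$ norm of $n$, which \eqref{NO-eq3.4} (an estimate on $\{|x|\ge R_0\}$ only) does not supply, and which is not among the quantities $\norm{n_0}_1,\norm{n_0\log n_0}_1,\norm{u_0}_2$ permitted in the final constant. The paper instead subtracts the mean, works with $c_m=c-[c]_B$ on $B=B_{2R}(0)$, and uses $\norm{c_m}_{L^p(B)}\lec\norm{c}_{BMO}\lec\norm{n_0}_1$ from \thref{BMO-est} and \thref{BMO-def-est}. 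The two $c_m$-terms are then dominated, via Young, by $\tfrac13\int n^3\Phi_R+\tfrac12\int|\na n|^2\Phi_R+CR^{-4}(\norm{n_0}_1^6+1)$, and the first two pieces are reabsorbed.

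Your handling of the advection remainder is essentially correct; the paper does the same thing with $L^3$ in place of $L^4$, writing $\tfrac12\int n^2u\cdot\na\Phi_R\le CR^{-3}\norm{u}_3^3+\tfrac56\int n^3\Phi_R$ and using $\int_0^T\norm{u}_3^3\,dt\le C$ from \eqref{est-u33}, but either exponent pair from \thref{u-regularity} works.
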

\begin{proof}
For fixed $R\ge 2R_0$, we let $c_m(t) = c(t) - [c(t)]_B$, where $[c(t)]_B = (1/|B|) \int_B c(t)\,dx$, $B=B_{2R}(0)$. Then $\norm{c_m}_p\lec \norm{c}_{BMO}$ for $1<p<\infty$ by \thref{BMO-def-est}. Multiplying the equation of $n$ by $n\Phi_R$, integrating on $\R^2$, and performing integration by parts, we have
\EQN{
\frac12&\,\frac{d}{dt} \int_{\R^2} n^2(t)\Phi_R\, dx + \int_{\R^2} |\na n(t)|^2\Phi_R\, dx\\
&= -\int_{\R^2} nu\cdot\na n\Phi_R\, dx + \int_{\R^2} n(\De n)\Phi_R\, dx - \int_{\R^2} n\na\cdot(n\na c)\Phi_R\, dx + \int_{\R^2} |\na n|^2\Phi_R\, dx\\
&= -\int_{\R^2} nu\cdot\na n\Phi_R\, dx + \frac12 \int_{\R^2} n^3\Phi_R\, dx + \frac12 \int_{\R^2} n^2(1-c_m)\De\Phi_R\, dx - \int_{\R^2} n\na n\cdot\na \Phi_R c_m\, dx,
}
where
\[
\frac12 \int_{\R^2} n^2(1-c_m)\De\Phi_R\, dx 
\le \frac14 \int_{\R^2} n^3\Phi_R\, dx + CR^{-4}(\norm{c}_{\text{BMO}}^3+1)
\]
and
\[
- \int_{\R^2} n\na n\cdot\na \Phi_R c_m\, dx 
\le \frac13 \int_{\R^2} n^3\Phi_R\, dx + \frac12 \int_{\R^2} |\na n|^2\Phi_R\, dx + CR^{-4}\norm{c}_{\text{BMO}}^6.
\]
Hence
\EQN{
\frac{d}{dt} \int_{\R^2}& n^2\Phi_R\, dx + \int_{\R^2} |\na n|^2\Phi_R\, dx \\
&\le -\int_{\R^2} nu\cdot\na n\Phi_R\, dx + \frac{13}6 \int_{\R^2} n^3\Phi_R\, dx + CR^{-4}\bke{\norm{c}_{\text{BMO}}^6 + \norm{c}_{\text{BMO}}^3 + 1},
}
where 
\EQN{
- \int_{\R^2} nu\cdot\na n \Phi_R\, dx 
= -\frac12 \int_{\R^2} u\cdot\na (n^2) \Phi_R\, dx
=&~ \frac12 \int_{\R^2} n^2 u\cdot\na \Phi_R\, dx\\
\le&~ \frac12\, CR^{-1} \int_{A_R^*} n^2 |u|\Phi_R\, dx\\
\le&~ CR^{-3} \norm{u}_3^3 + \frac56 \int_{\R^2} n^3\Phi_R\, dx
}
since $u$ is divergence-free.
Thus,
\EQN{
\frac{d}{dt} \int_{\R^2}& n^2\Phi_R\, dx + \int_{\R^2} |\na n|^2\Phi_R\, dx \\
&\le 3 \int_{\R^2} n^3\Phi_R\, dx + CR^{-4}\bke{\norm{c}_{\text{BMO}}^6 + \norm{c}_{\text{BMO}}^3 + 1} + CR^{-3} \norm{u}_3^3,
}
and from a similar argument as that of \cite[(3.31)]{NO-2016} we have
\EQN{
\frac{d}{dt} \int_{\R^2} &n^2\Phi_R\, dx + \bke{1-\ve \int_{A_R^*} (1+n)\log(1+n)\, dx} \int_{\R^2} |\na n|^2 \Phi_R\, dx\\
& \le C\bke{\int_{\R^2} n^{3/2}|\na \Phi_R^{1/2}|\, dx}^2 + C(\ve) \int_{\R^2} n\Phi_R\, dx + CR^{-4}\bke{\norm{c}_{\text{BMO}}^6 + \norm{c}_{\text{BMO}}^3 + 1}\\
&\quad + CR^{-3}\norm{u}_3^3.
}
Choosing $\ve>0$ such that
\[
1-\ve \int_{A_R^*} (1+n)\log(1+n)\, dx = \frac12,
\]
we have
\EQN{
\frac{d}{dt} \int_{\R^2} &n^2\Phi_R\, dx + \frac12 \int_{\R^2} |\na n|^2 \Phi_R\, dx\\
& \le CR^{-2}\norm{n_0}_1 \int_{|x|\ge R/2} n^2\, dx + C(\ve)\norm{n_0}_1 + C(\norm{n_0}_1) R^{-4} + CR^{-3}\norm{u}_3^3.
}
Recall from \eqref{est-u33} that $\int_0^T \norm{u(t)}_3^3\, dt \le C(\norm{n_0}_1, \norm{n_0\log n_0}_1, \norm{u_0}_2)\, T$. Integrating the above inequality from $0$ to $t$ with respect to the time variable, we have
\EQN{
\int_{\R^2} n^2(t)\Phi_R\, dx + \frac12 \int_0^t \int_{\R^2} |\na n|^2\Phi_R\, dxds
& \le \int_{\R^2} n_0^2\Phi_R\, dx + C(\norm{n_0}_1,\norm{n_0\log n_0}_1,\norm{u_0}_2,T,R)\\
& \le \int_{\R^2} n_0\Phi_R\, dx + C(\norm{n_0}_1,\norm{n_0\log n_0}_1,\norm{u_0}_2,T,R).
}
This proves \eqref{NO-eq3.28}.
\end{proof}

\medskip

\begin{lem}\thlabel{NO-lem3.5}
For any $R\ge 2^2 R_0$,
\[
\int_0^T \int_{|x|\ge R} n^4\, dxdt \le C(\norm{n_0}_1, \norm{n_0\log n_0}_1, \norm{u_0}_2, T,R).
\]
\end{lem}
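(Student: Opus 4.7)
The plan is to bootstrap from the $L^\infty_t L^2_x \cap L^2_t \dot H^1_x$ control of $n$ on the exterior region $\{|x|\ge R/2\}$, provided by \thref{NO-lem3.4}, to the desired $L^4_{t,x}$ control on $\{|x|\ge R\}$, using the two-dimensional Gagliardo--Nirenberg inequality
\[
\norm{f}_{4}^{4} \le C\, \norm{f}_{2}^{2}\, \norm{\na f}_{2}^{2}, \qquad f\in H^1(\R^2).
\]
This is cleaner than re-running a weighted energy argument (e.g.\ multiplying the $n$-equation by $n^3\Phi_R$ and handling three new coupling terms), and it is sufficient since \thref{NO-lem3.4} already supplies the two ingredients on the correct localized region.

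Concretely, since $R\ge 2^2 R_0=2\cdot (2R_0)$, we have $R/2\ge 2R_0$, so \thref{NO-lem3.4} applied with radius $R/2$ yields uniform-in-time bounds on $\int_{|x|\ge R/2} n^2(t)\, dx$ and on $\int_0^T \int_{|x|\ge R/2} |\na n|^2\, dxdt$. Take the cutoff $\Phi_R$ from Section~\ref{NO-sec3.2}: $\Phi_R\equiv 1$ on $\{|x|\ge R\}$, $\mathrm{supp}\,\Phi_R \subset \{|x|\ge R/2\}$, and $|\na\Phi_R|\le CR^{-1}$. Applying Gagliardo--Nirenberg to $n(t)\Phi_R\in H^1(\R^2)$ and integrating in $t$, one obtains
\[
\int_0^T \int_{\R^2} (n\Phi_R)^4\, dxdt
\;\le\; C\,\Big(\sup_{0<t<T}\norm{n(t)\Phi_R}_{2}^{2}\Big) \int_0^T \norm{\na(n(t)\Phi_R)}_{2}^{2}\, dt.
\]
Expanding $|\na(n\Phi_R)|^2 \le 2\Phi_R^2 |\na n|^2+2 n^2|\na\Phi_R|^2$, the first term integrates to the dissipation bound of \thref{NO-lem3.4}, while the second is at most $CR^{-2}T\sup_{t}\int_{|x|\ge R/2} n^2\,dx$, again finite by \thref{NO-lem3.4}. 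Since $\Phi_R=1$ on $\{|x|\ge R\}$,
\[
\int_0^T\!\!\int_{|x|\ge R} n^4\, dxdt \;\le\; \int_0^T\!\!\int_{\R^2} (n\Phi_R)^4 \, dxdt \;\le\; C(\norm{n_0}_1,\norm{n_0\log n_0}_1,\norm{u_0}_2,T,R),
\]
which is the claim.

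There is no serious obstacle here: the argument is a routine 2D interpolation combined with a cutoff, and the coupling with $u$ plays no role at all (the velocity only enters indirectly through the constants in \thref{NO-lem3.4}). The only point requiring mild care is choosing $R\ge 2^2 R_0$ so that $\mathrm{supp}\,\na\Phi_R \subset \{R/2\le |x|\le R\}$ lies inside the region where the hypotheses of \thref{NO-lem3.4} are available.
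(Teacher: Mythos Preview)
Your argument is correct and is essentially the same as the paper's: the paper simply defers to \cite[Lemma 3.5]{NO-2016}, whose proof is precisely the localized Gagliardo--Nirenberg (Ladyzhenskaya) interpolation you carry out, feeding in the $L^\infty_tL^2_x$ and $L^2_t\dot H^1_x$ bounds on $\{|x|\ge R/2\}$ from \thref{NO-lem3.4}. Your observation that the coupling with $u$ is invisible at this step is exactly why the paper can cite the PKS argument verbatim.
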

\begin{proof}
The lemma is a consequence of \thref{NO-lem3.4} following the same proof of \cite[Lemma 3.5]{NO-2016}.
\end{proof}

\medskip

\begin{lem}\thlabel{NO-lem3.6}
For any $R\ge 2^3 R_0$,
\[
\sup_{0<t<T} \int_{|x|\ge R} n^3(t)\, dx + \int_0^T \int_{|x|\ge R} |\na n^{3/2}|^2 \Phi_R\, dxdt
\le C(\norm{n_0}_1, \norm{n_0\log n_0}_1, \norm{u_0}_2, T,R).
\]
\end{lem}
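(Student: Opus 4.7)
The plan is to adapt the strategy of \thref{NO-lem3.4} and \thref{NO-lem3.5}, now testing the density equation $\eqref{PKS-NS}_1$ against $n^2 \Phi_R$. Standard integration by parts together with $-\Delta c = n$, $\nabla\cdot u = 0$, and the identity $n|\nabla n|^2 = \frac{4}{9}|\nabla n^{3/2}|^2$ should yield an energy equation of the form
\[
\frac{1}{3}\,\frac{d}{dt}\int_{\R^2} n^3\Phi_R\, dx + \frac{8}{9}\int_{\R^2}|\nabla n^{3/2}|^2 \Phi_R\, dx = \frac{2}{3}\int_{\R^2} n^4 \Phi_R\, dx + J_1 + J_2 + J_3,
\]
where $J_1 = \frac{1}{3}\int n^3 \Delta\Phi_R\, dx$, $J_3 = \frac{1}{3}\int n^3 \nabla c_m\cdot\nabla\Phi_R\, dx$ with $c_m := c - [c]_{B_{2R}(0)}$ (so that $\nabla c_m = \nabla c$ while $\norm{c_m}_p \lec \norm{c}_{BMO} \lec \norm{n_0}_1$ by \thref{BMO-def-est} and \thref{BMO-est}), and
\[
J_2 = \frac{1}{3}\int_{\R^2} n^3\, u\cdot\nabla\Phi_R\, dx
\]
is the genuinely new term, produced by $\nabla\cdot u = 0$ after integration by parts.

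The $n^4$-term on the right is integrable in time on $[0,T]$ by \thref{NO-lem3.5} applied with radius $R/2 \ge 2^2 R_0$, while $J_1$ and $J_3$ can be handled exactly as in \cite[(3.31)]{NO-2016}: $|J_1| \lec R^{-2}\int_{A_R^*} n^3 \Phi_R^{2/3}\, dx$ is bounded by interpolation against \thref{NO-lem3.4} and \thref{NO-lem3.5}, while for $J_3$ one uses the Young splitting $n^3|\nabla c_m|\le \frac{1}{2} n^4+\frac{1}{2} n^2|\nabla c_m|^2$ together with the Hardy--Littlewood--Sobolev bound \eqref{HLS} on $\nabla c$. The genuinely new difficulty is $J_2$. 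Using $|\nabla\Phi_R|\lec R^{-1}\Phi_R^{5/6}$ and H\"older's inequality gives
\[
|J_2|(t) \lec R^{-1}\bke{\int_{A_R^*}n^4(t)\, dx}^{3/4}\norm{u(t)}_{L^4(A_R^*)},
\]
and integrating in time with a further H\"older split yields
\[
\int_0^T |J_2|(t)\, dt \lec R^{-1}\bke{\int_0^T\!\!\int_{|x|\ge R/2}\! n^4\, dxdt}^{3/4}\bke{\int_0^T \norm{u(t)}_4^4\, dt}^{1/4}.
\]
Both factors are finite, the first by \thref{NO-lem3.5} and the second by \thref{u-regularity} with the admissible Prodi--Serrin pair $(q,s)=(4,4)$.

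Putting these estimates together in the energy inequality and integrating over $[0,t]\subset[0,T]$, and noting that $\int_{\R^2} n_0^3 \Phi_R\, dx \le \int_{|x|\ge R/2} n_0(x)\, dx \le \norm{n_0}_1$, since $R_0$ was chosen in Section \ref{NO-sec3.2} large enough that $n_0\le 1$ on $\{|x|\ge R_0\}$, will give both the $L^\infty_t L^3_x$ bound on $n$ and the $L^2_{t,x}$ bound on $\nabla n^{3/2}$ claimed in the lemma. The main obstacle throughout is the advective term $J_2$: it cannot be absorbed into the dissipation $|\nabla n^{3/2}|^2 \Phi_R$ directly, so it must instead be absorbed into already-controlled quantities, which is precisely enabled by the Prodi--Serrin regularity of $u$ from \thref{u-a-priori}--\thref{u-regularity} together with the exterior $L^4_{t,x}$ control of $n$ from \thref{NO-lem3.5}.
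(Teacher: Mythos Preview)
Your proposal follows the same route as the paper: test $\eqref{PKS-NS}_1$ with $n^2\Phi_R$, derive exactly the energy identity you wrote, and control each term on the right. Your treatment of the advective term $J_2$ via space--time H\"older and the Prodi--Serrin bound $u\in L^4_tL^4_x$ from \thref{u-regularity} is equivalent to the paper's, which instead uses the pointwise Young inequality $n^3|u|\le\tfrac34 n^4+\tfrac14|u|^4$ before integrating; both produce a contribution bounded by $\int_0^T\|u\|_4^4\,dt + \int_0^T\!\int_{|x|\ge R/2}n^4$.

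The one place your sketch does not quite close is $J_3$. The route you propose---the Young split $n^3|\nabla c_m|\le\tfrac12 n^4+\tfrac12 n^2|\nabla c_m|^2$ followed by the global HLS bound \eqref{HLS}, $\|\nabla c\|_4\lec\|n\|_{4/3}$---leads after a further H\"older/Young step to a time integral of $\|n(t)\|_{4/3}^4$. But the mild-solution estimate only gives $\|n(t)\|_{4/3}\lec t^{-1/4}$, so $\|n(t)\|_{4/3}^4\lec t^{-1}$ is not integrable on $[0,T]$ and the bound fails at $t=0$. The paper (following \cite[Lemma~3.6]{NO-2016}) never invokes global HLS on $\nabla c$ here; instead it integrates by parts to move the derivative off $c_m$, producing one term with $\nabla n^{3/2}$ that is absorbed into the dissipation $\int|\nabla n^{3/2}|^2\Phi_R$, and remainders controlled by $\int_{A_R^*}|c_m|^p\lec R^2\|c\|_{BMO}^p\lec R^2\|n_0\|_1^p$ via \thref{BMO-def-est} and \thref{BMO-est}. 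Those BMO contributions are \emph{uniformly} bounded in $t$, which is precisely what makes the integration on $[0,T]$ go through. With this correction to $J_3$, your argument matches the paper's.
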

\begin{proof}
Multiplying the equation of $n$ by $n^2\Phi_R$, integrating over $\R^2$, and using integration by parts and the fact that $u$ is divergence-free, we have
\EQN{
\frac13\, \frac{d}{dt}& \int_{\R^2} n^3 \Phi_R\, dx + \frac89 \int_{\R^2} |\na n^{3/2}|^2\Phi_R\, dx\\
&= -\int_{\R^2} (u\cdot\na n) n^2\Phi_R\, dx + \frac13 \int_{\R^2} n^3 \De\Phi_R\, dx - \int_{\R^2} n^2\na \cdot(n\na c_m)\Phi_R\, dx\\
&= \frac13 \int_{\R^2} n^3 u\cdot\na\Phi_R\, dx + \frac13 \int_{\R^2} n^3 \De\Phi_R\, dx + \frac13 \int_{\R^2} n^3\na c_m\cdot\na\Phi_R\, dx + \frac23 \int_{\R^2} n^4\Phi_R\, dx,
}
where $c_m(t) = c(t) - [c(t)]_B$, $[c(t)]_B = (1/|B|) \int_B c(t)\,dx$, $B=B_{2R}(0)$ satisfies $\norm{c_m}_p\lec \norm{c}_{BMO}$ by \thref{BMO-def-est}. So
\EQN{
\frac{d}{dt}& \int_{\R^2} n^3 \Phi_R\, dx + \frac83 \int_{\R^2} |\na n^{3/2}|^2\Phi_R\, dx\\
&= \int_{A_R^*} n^3 u\cdot\na\Phi_R\, dx + \int_{\R^2} n^3 \De\Phi_R\, dx + \int_{\R^2} n^3\na c_m\cdot\na\Phi_R\, dx + 2\int_{\R^2} n^4\Phi_R\, dx\\
&\le CR^{-1} \int_{A_R^*} n^3|u|\Phi_R^{5/6}\, dx + CR^{-2} \int_{|x|\ge R/2} n^3\, dx + \int_{\R^2} n^3 \na c_m\cdot\na\Phi_R + 2\int_{\R^2} n^4\, dx
\\
&\le CR^{-1} \norm{u(t)}_4^4 + CR^{-1} \int_{|x|\ge R/2} n^4\, dx + F(t),
}
where
\EQN{
F(t) =&~ CR^{-2} \int_{|x|\ge R/2} n^3\, dx + C\int_{|x|\ge R/2} n^4\, dx\\
&~ + CR^{-6}\bke{\norm{c(t)}_{\text{BMO}}^8 + \norm{c(t)}_{\text{BMO}}^4},
}
and $\norm{u}_{L^4(\R^2\times(0,T))}\le C(\norm{n_0}_1, \norm{n_0\log n_0}_1, \norm{u_0}_2)$ by \thref{u-regularity}. By \thref{NO-lem3.4} and \thref{NO-lem3.5} we have
\[
\int_0^T F(t)\, dx \le C(\norm{n_0}_1, \norm{n_0\log n_0}_1,\norm{u_0}_2, T,R).
\]
Therefore,
\EQN{
\int_{\R^2} n^3(t)\Phi_R\, dx +& \int_0^t \int_{\R^2} |\na n^{3/2}|^2 \Phi_R\, dxds\\
&\qquad \le \int_{\R^2} n_0^3\Phi_R\, dx + C(\norm{n_0}_1, \norm{n_0\log n_0}_1, \norm{u_0}_2, T,R)\\
&\qquad \le \int_{\R^2} n_0\Phi_R\, dx + C(\norm{n_0}_1, \norm{n_0\log n_0}_1, \norm{u_0}_2,T,R).
}
This completes the proof of the lemma.
\end{proof}

\medskip

\subsection{A priori estimates for annuli}
Let $R_0>0$ be given in \thref{NO-prop3.2}. Throughout the rest of this paper, we denote
\EQ{\label{def-AR}
A_R = \{x\in\R^2:R/2\le|x|\le2R\}.
}
For $R\ge1$ we let $\tilde \Phi_R\in C^\infty_0(\R^2)$ be such that $\tilde \Phi_R(x)\in[0,1]$, $\tilde \Phi_R(x)=1$ for $R/2\le |x|\le 2R$, 
\[
\text{supp}\, \tilde \Phi_R \subset \tilde A_R := \{x\in\R^2:R/3\le|x|\le3R\},
\]
and
\[
|\na \tilde \Phi_R| \le CR^{-1} \tilde \Phi_R^{5/6},\qquad 
|\na \tilde \Phi_R^{1/2}| \le CR^{-1} \tilde \Phi_R^{1/3},\qquad
|\na^2 \tilde \Phi_R|\le CR^{-2} \tilde \Phi_R^{2/3},
\]
where $C>0$ is independent of $R$. We put $m(x,t) = n(x,t) \tilde \Phi_R(x)$, which satisfies $m\in C([0,T);L^2(\R^2)) \cap L^\infty(0,T;L^2(\R^2))$ and 
\EQ{\label{NO-eq3.38}
\pd_tm - \De m = f,\qquad 0<t<T,\quad x\in\R^2,
}
where
\EQ{\label{NO-eq3.39}
f = -u\cdot\na n \tilde \Phi_R - (2\na \tilde \Phi_R + \tilde \Phi_R\na c)\cdot\na n - (\De \tilde \Phi_R)n + \tilde \Phi_R n^2.
}
Denote $w(x,t) = u(x,t) \tilde \Phi_R(x)$, then 
\EQ{\label{NO-eq3.38-u}
\pd_t w - \De w = g,\qquad 0<t<T,\quad x\in\R^2,
}
where
\EQ{\label{NO-eq3.39-u}
g = -(u\cdot\na)u \tilde \Phi_R - \na P \tilde \Phi_R + n \na c \tilde \Phi_R - 2 \na u\cdot \na \tilde \Phi_R - u(\De\tilde \Phi_R).
}

\medskip

\begin{lem}\thlabel{NO-lem3.7}
There exists $R_1\ge 2^5 R_0$ such that for any $R\ge R_1$, $\na c$ and $c_m$ are bounded on $A_R\times(0,T)$, where $c_m(t) = c(t) - \bkt{c(t)}_B$ in which $\bkt{c(t)}_B = (1/|B|)\int_B c(t)\, dx$, $B=B_{2R}(0)$.
\end{lem}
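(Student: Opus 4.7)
The plan is to decompose $c$ into near-field and far-field contributions relative to the annulus $A_R$, and then exploit the exterior a priori $L^p$ bounds from Section \ref{NO-sec3.2} to control each piece. Choose $R_1 := 2^5 R_0$, which ensures that for every $R\ge R_1$ we have $R/4 \ge 2^3 R_0$, so that \thref{NO-lem3.6} applies on the exterior region $\{|y|\ge R/4\}$ and yields $\sup_{0<t<T}\|n(t)\chi_{\{|y|\ge R/4\}}\|_3 < \infty$ and the analogous $L^2$ bound from \thref{NO-lem3.4}. Write $c = c_1 + c_2$, where $-\Delta c_i = n_i$ in $\R^2$ with $n_1 = n\chi_{\{|y|\le R/4\}}$ and $n_2 = n\chi_{\{|y|>R/4\}}$, so that $c_1,c_2$ are given by convolution with $\tfrac{1}{2\pi}\log(1/|\cdot|)$.

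For $\nabla c$, the ``far-source'' piece is elementary: for $x \in A_R$ every $y \in \mathrm{supp}(n_1)$ satisfies $|x-y|\ge R/4$, hence $|\nabla c_1(x)| \le \tfrac{1}{2\pi}\int \tfrac{n_1(y)}{|x-y|}\,dy \le \tfrac{2M}{\pi R}$. For the ``near-source'' piece we invoke \thref{Nagai2011-lem2.5} with $q=3$ applied to $n_2$, yielding
\[
\|\nabla c_2\|_\infty \le C_3\,\|n_2\|_1^{1/4}\|n_2\|_3^{3/4} \le C_3\, M^{1/4}\bigl(\sup_{0<t<T}\|n(t)\chi_{\{|y|>R/4\}}\|_3\bigr)^{3/4},
\]
which is finite and uniform in $t\in(0,T)$ by \thref{NO-lem3.6}.

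For $c_m$, we first bound $c$ pointwise on $A_R$. Since the supports in $c_1$ give $|x-y|\in[R/4,9R/4]$, we get $|c_1(x)| \le \tfrac{M}{2\pi}\log(9R/4)$. For $c_2(x)$, split the integration at $|x-y|=1$: the near-singular piece is estimated by H\"older using that $|\log|\cdot||\in L^{3/2}(B_1)$, combined with the uniform $L^3$ bound on $n_2$; the part where $|x-y|>1$ uses $\log|x-y|\le \log(|x|+|y|)$, giving a $\log(6R)\cdot M$ contribution when $|y|\le 4R$ and a contribution dominated by $\int n(t)\log(1+|y|)\,dy$ when $|y|>4R$, the latter being uniformly finite on $(0,T)$ by \eqref{est-nlogx} and the hypothesis $n_0\log(1+|x|)\in L^1$. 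To pass from $c$ to $c_m$ we bound the spatial average $[c]_B$: by Fubini,
\[
|[c]_B| \le \tfrac{1}{2\pi|B|}\int_{\R^2}\Bigl(\int_B|\log|y-z||\,dy\Bigr)n(z)\,dz,
\]
where the inner integral is $\le CR^2(1+\log R)$ for $|z|\le 3R$ and $\le |B|(\log|z|+C)$ for $|z|>3R$, so the total is controlled by $M$ and $\|n(t)\log(1+|x|)\|_1$, once again uniformly in $t$ via \eqref{est-nlogx}. Combining,
\[
\|c_m\|_{L^\infty((0,T)\times A_R)} \le \|c\|_{L^\infty((0,T)\times A_R)} + \sup_{0<t<T}|[c]_B(t)| < \infty.
\]

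The main technical obstacle is the uniform-in-time control of the near-singular contribution $\int_{|x-y|\le 1}\log(1/|x-y|)\,n(y)\,dy$ and of the average $[c]_B$: I avoid relying on the short-time smoothing estimate of \thref{local-exist}(ii) (which blows up as $t\to 0^+$) by instead using the time-uniform exterior $L^3$ bound of \thref{NO-lem3.6} on the region $\{|y|\ge R/4\}$, and this is precisely the reason $R_1$ is chosen to satisfy $R_1/4 \ge 2^3 R_0$.
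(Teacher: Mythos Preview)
Your proof is correct and matches the paper's approach: the paper itself defers to \cite[Lemma 3.7]{NO-2016} and identifies \thref{NO-lem3.6} as the sole new input, and your near/far decomposition of $c$ together with \thref{Nagai2011-lem2.5} applied to $n_2=n\chi_{\{|y|>R/4\}}$ (whose uniform-in-time $L^3$ bound comes exactly from \thref{NO-lem3.6} once $R/4\ge 2^3R_0$) is precisely the expected realization of that argument. The pointwise estimate of $c$ on $A_R$ and the separate control of $[c]_B$ via \eqref{est-nlogx} are likewise the standard way to handle $c_m$ in this setting.
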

\begin{proof}
The lemma is a consequence of \thref{NO-lem3.6} following the same proof of \cite[Lemma 3.7]{NO-2016}.
\end{proof} 

\medskip

\begin{lem}\thlabel{lem-u-bound-annulus}
There exists $R_2\ge 3 R_1$ such that for any $R\ge R_2$, $u$ is bounded on $A_R\times(0,T)$.
\end{lem}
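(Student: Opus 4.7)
Mirroring the strategy of \thref{NO-lem3.7}, I would set $w(x,t) = u(x,t)\,\tilde\Phi_R(x)$, which is compactly supported in $\tilde A_R$ and satisfies the forced heat equation $\partial_t w - \Delta w = g$ with $g$ given by \eqref{NO-eq3.39-u}. Since $\tilde\Phi_R \equiv 1$ on $A_R$, it suffices to prove $w \in L^\infty(\R^2 \times (0,T))$, and I would approach this via the Duhamel representation
\[
w(t) = e^{t\Delta}(u_0\tilde\Phi_R) + \int_0^t e^{(t-s)\Delta} g(s)\,ds
\]
combined with the $L^p$-$L^q$ heat-semigroup bounds \eqref{semigp-est} and a piece-by-piece analysis of $g$.

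The key is to place each term of $g$ in a mixed $L^p(0,T;L^q(\R^2))$ space whose Duhamel integral lies in $L^\infty$. By \thref{NO-lem3.7} (for $R \ge R_1$), $\nabla c$ is bounded on $\tilde A_R \times (0,T)$; combined with $n \in L^4(0,T; L^4(\tilde A_R))$ from \thref{NO-lem3.5}, the chemotactic forcing $n\nabla c\,\tilde\Phi_R$ lies in $L^4(0,T; L^4(\R^2))$. The commutator terms $\nabla u \cdot \nabla\tilde\Phi_R$ and $u\,\Delta\tilde\Phi_R$ are handled via $\nabla u \in L^2(0,T;L^2(\R^2))$ from \thref{u-a-priori} and $u \in L^s(0,T;L^q(\R^2))$ for Prodi--Serrin pairs $2/s+2/q=1$ from \thref{u-regularity}. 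The nonlinear-and-pressure piece $(u\cdot\nabla u)\tilde\Phi_R + \nabla P\,\tilde\Phi_R$ lies in $L^2(\R^2 \times (0,T))$ by \thref{u-regularity-1}; to compensate for this being critical in 2D, I would rewrite $u\cdot\nabla u = \nabla\cdot(u\otimes u)$ using $\nabla\cdot u = 0$, and invoke the sharper heat-kernel estimate $\|e^{t\Delta}\nabla\cdot F\|_\infty \lec t^{-1/q-1/2}\|F\|_q$ with $u\otimes u \in L^{s/2}(0,T; L^{q/2}(\R^2))$, upgrading the integrability via the Prodi--Serrin regularity of $u$.

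Inserting these bounds into the Duhamel representation: the initial-data term $\|e^{t\Delta}(u_0\tilde\Phi_R)\|_\infty \lec t^{-1/2}\|u_0\|_2$ is bounded away from $t=0$; the $L^4_tL^4_x$ forcing contributes a finite $L^\infty$ term via $\int_0^t (t-s)^{-1/4}\|g(s)\|_4\,ds < \infty$; and the divergence-structured nonlinearity gives a bounded contribution via $\int_0^t (t-s)^{-1/q-1/2}\|u\otimes u(s)\|_q\,ds$ for an appropriate pair $(p,q)$ with $1/p + 1/q < 1/2$. The main obstacle I anticipate is precisely this last step: the nonlinear term $u\cdot\nabla u$ is critical at the $L^2$-scale for the 2D heat equation, and only the combination of its divergence structure with the full Prodi--Serrin regularity of $u$ (together with the pressure control from \thref{u-regularity-1}) provides the extra margin needed to push the forcing into a subcritical $L^p_tL^q_x$ space for which Duhamel yields a bounded $w$, hence $u$ bounded on $A_R \times (0,T)$.
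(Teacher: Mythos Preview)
Your setup via the localized heat equation for $w=u\tilde\Phi_R$ and the Duhamel formula is the same as the paper's, and your term-by-term accounting is largely correct. The difference lies in how the critical nonlinear-and-pressure piece is handled.

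The paper does not attempt a direct Duhamel-to-$L^\infty$ estimate. It simply verifies that the \emph{entire} forcing $g$ lies in $L^2(0,T;L^2(\R^2))$ --- using \thref{u-regularity-1} for the global terms $(u\cdot\nabla u)\tilde\Phi_R$ and $(\nabla P)\tilde\Phi_R$, and \thref{NO-lem3.4} together with \thref{NO-lem3.7} for the terms involving $n$ and $\nabla c$ --- and then invokes the argument of \cite[Lemma~3.8]{NO-2016}. That argument is a maximal-regularity bootstrap: $g\in L^2_{t,x}$ and $w(0)\in H^1$ first give $w\in L^2(0,T;H^2)\cap C([0,T];H^1)$, and this improved regularity on the compactly supported $w$ is then fed back to reach $L^\infty$.

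Your proposed workaround has a genuine gap. Writing $u\cdot\nabla u=\nabla\cdot(u\otimes u)$ and appealing only to Prodi--Serrin regularity $u\in L^s_tL^q_x$, $\tfrac2s+\tfrac2q=1$, gives $u\otimes u\in L^{s/2}_tL^{q/2}_x$, i.e.\ $L^P_tL^Q_x$ with $\tfrac1P+\tfrac1Q=1$. But your own condition for the divergence-form Duhamel integral $\int_0^t(t-s)^{-1/2-1/Q}\|u\otimes u(s)\|_Q\,ds$ to be finite is $\tfrac1P+\tfrac1Q<\tfrac12$; the Prodi--Serrin scaling is therefore still exactly critical and does not close. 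Moreover, the pressure term $(\nabla P)\tilde\Phi_R$ is not addressed: it is nonlocal, it is not in divergence form, and nothing beyond $\nabla P\in L^2_{t,x}$ from \thref{u-regularity-1} has been extracted. To make a direct Duhamel argument work you would need to exploit the stronger consequence of \thref{u-regularity-1} (namely $u\in L^\infty_tH^1_x\cap L^2_tH^2_x$ globally, hence $u\in L^\infty_tL^p_x$ for all $p<\infty$ and $u\in L^4_tL^\infty_x$ by interpolation), which puts $u\otimes u$ and, via the Riesz-transform representation of $P$, also the localized pressure into a genuinely subcritical space. As written, your sketch stops short of this and the argument does not close.
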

\begin{proof}
By \thref{u-regularity-1} and \thref{NO-lem3.4}, $g$ defined by \eqref{NO-eq3.39-u} is in $L^2(0,T;L^2(\R^2))$. The rest of the proof is identical to that of \cite[Lemma 3.8]{NO-2016}.
\end{proof}

\medskip

\begin{lem}\thlabel{NO-lem3.8}
There exists $R_3\ge 3 R_2$ such that for any $R\ge R_3$, $n$ is bounded on $A_R\times(0,T)$.
\end{lem}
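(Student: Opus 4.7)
The plan is to mirror the strategy of Lemmas~\thref{NO-lem3.7} and~\thref{lem-u-bound-annulus}: interpret $m=n\tilde\Phi_R$ as a solution of the inhomogeneous heat equation \eqref{NO-eq3.38}--\eqref{NO-eq3.39}, show that the source $f$ belongs to $L^p(\R^2\times(0,T))$ for some $p$ large enough that parabolic regularity forces $m\in L^\infty(\R^2\times(0,T))$, and then restrict to $A_R$, where $\tilde\Phi_R\equiv 1$.

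First I would fix $R\ge 3R_2$ so that the support $\tilde A_R$ of $\tilde\Phi_R$ is covered by finitely many annuli of the form $A_{R'}$, $R'\ge R_2$. On each such annulus \thref{NO-lem3.7} gives $\nabla c,\,c_m\in L^\infty$ and \thref{lem-u-bound-annulus} gives $u\in L^\infty$; combining a finite family of them yields uniform bounds for $\nabla c$, $c_m$, and $u$ over $\tilde A_R\times (0,T)$. Over the same region, \thref{NO-lem3.4}, \thref{NO-lem3.5} and \thref{NO-lem3.6} supply $n\in L^\infty(0,T;L^3)$, $\nabla n\in L^2$, $\nabla n^{3/2}\in L^2$ and $n\in L^4$; interpolation together with the Sobolev embedding applied to $n^{3/2}$ then gives $n\in L^q((0,T)\times\tilde A_R)$ for some $q>4$.

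Next I would rewrite the advection contribution via $\nabla\cdot u=0$ as
\[
-u\cdot\nabla n\,\tilde\Phi_R \;=\; -\nabla\cdot\bke{u\,n\,\tilde\Phi_R}+n\,u\cdot\nabla\tilde\Phi_R,
\]
placing it into divergence form with $u\,n\,\tilde\Phi_R\in L^q$ plus a zeroth-order piece supported where $\nabla\tilde\Phi_R\neq 0$. The drift $\tilde\Phi_R\,\nabla c\cdot\nabla n$ and the transport correction $2\nabla\tilde\Phi_R\cdot\nabla n$ are controlled in $L^p$ for some $p>2$ by the pointwise bounds on $\nabla c$ and the estimate $\nabla n\in L^2$ from \thref{NO-lem3.4}; the term $(\Delta\tilde\Phi_R)n$ lies in $L^\infty_t L^q_x$; and the quadratic source $\tilde\Phi_R n^2$ lies in $L^{q/2}$ by the integrability of $n$ obtained above. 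Collecting these, $f$ decomposes as $f=f_0-\nabla\cdot F$ with $f_0,F\in L^p(\R^2\times(0,T))$ for some $p>2$.

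Finally, I would apply maximal $L^p$-regularity for the heat equation on $\R^2\times(0,T)$ with initial datum $m(\cdot,0)=n_0\tilde\Phi_R$, which is bounded (one enlarges $R_0$ at the start of Subsection~\ref{NO-sec3.2} so that $n_0\le 1$ on $\{|x|\ge R_0\}$), to conclude $m\in L^\infty(\R^2\times(0,T))$; restricting to $A_R$ gives the claim. The main obstacle, as in the analogous lemma in~\cite{NO-2016}, is controlling the quadratic source $\tilde\Phi_R n^2$, which is exactly why the higher local integrability of~\thref{NO-lem3.6} rather than the mere $L^2$ bound of~\thref{NO-lem3.4} is needed. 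What is genuinely new compared with the PKS treatment of~\cite{NO-2016} is the advection term $u\cdot\nabla n\,\tilde\Phi_R$, and this is handled through the $L^\infty$ bound on $u$ on $\tilde A_R$ furnished by~\thref{lem-u-bound-annulus}.
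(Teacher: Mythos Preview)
Your overall plan matches the paper's: set $m=n\tilde\Phi_R$, verify that the source $f$ in \eqref{NO-eq3.38}--\eqref{NO-eq3.39} has enough integrability for parabolic regularity to yield $m\in L^\infty$, and observe that the only new feature compared with \cite{NO-2016} is the advection term $u\cdot\nabla n\,\tilde\Phi_R$, handled through the $L^\infty$ bound on $u$ from \thref{lem-u-bound-annulus}. The paper's own proof is one line: it checks $f\in L^2(0,T;L^2(\R^2))$ using the exterior lemmas together with \thref{NO-lem3.7} and \thref{lem-u-bound-annulus}, and then invokes \cite[Lemma~3.8]{NO-2016} verbatim for the passage from $f\in L^2$ to $m\in L^\infty$.

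There is, however, a concrete gap in your execution. You claim that the drift term $\tilde\Phi_R\,\nabla c\cdot\nabla n$ and the correction $2\nabla\tilde\Phi_R\cdot\nabla n$ lie in $L^p$ for some $p>2$ ``by the pointwise bounds on $\nabla c$ and the estimate $\nabla n\in L^2$ from \thref{NO-lem3.4}.'' But a pointwise bound on the coefficient together with $\nabla n\in L^2_{t,x}$ gives only $L^2_{t,x}$; none of the other ingredients you list ($n\in L^\infty_tL^3$, $\nabla n^{3/2}\in L^2$, $n\in L^4$) upgrades $\nabla n$ beyond $L^2$. With those two terms only in $L^2$, your $f_0$ sits at the borderline exponent in two space dimensions and a single application of maximal regularity does not deliver $m\in L^\infty$. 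The fix is either (i) to stay at $L^2$ and carry out the bootstrap that \cite[Lemma~3.8]{NO-2016} performs, which is exactly what the paper cites, or (ii) to push those two terms into divergence form as well, via
\[
2\nabla\tilde\Phi_R\cdot\nabla n=2\nabla\cdot(n\nabla\tilde\Phi_R)-2n\Delta\tilde\Phi_R,\qquad
\tilde\Phi_R\,\nabla c\cdot\nabla n=\nabla\cdot(\tilde\Phi_R n\nabla c)-n\nabla c\cdot\nabla\tilde\Phi_R+\tilde\Phi_R n^2,
\]
after which every entry of $F$ is a bounded factor times $n$ (hence in $L^\infty_tL^3_x\cap L^6_{t,x}$, comfortably above the parabolic threshold), and the remaining zeroth-order pieces involve only $n$ and $n^2$ with bounded coefficients. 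Either route closes the argument.
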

\begin{proof}
By Lemmas in Subsection \ref{NO-sec3.2}, \thref{NO-lem3.7} and \thref{lem-u-bound-annulus}, $f$ defined by \eqref{NO-eq3.39} is in $L^2(0,T;L^2(\R^2))$. The rest of the proof is identical to that of \cite[Lemma 3.8]{NO-2016}.
\end{proof}

\medskip

\begin{lem}\thlabel{NO-lem3.9}
There exists $R_4\ge 3^3 R_3$ such that for any $R\ge R_4$, $\na n$ is bounded on $A_R\times(0,T)$.
\end{lem}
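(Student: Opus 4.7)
The plan is to bootstrap the $L^\infty$ bound on $n$ from \thref{NO-lem3.8} to an $L^\infty$ bound on $\na n$ via parabolic $L^p$-maximal regularity, following the strategy of \cite[Lemma 3.9]{NO-2016}. The only additional input required compared to the Patlak--Keller--Segel setting of \cite{NO-2016} is the $L^\infty$ control on $u$ supplied by \thref{lem-u-bound-annulus}, which allows us to treat the advection term $u\cdot\na n$ in exactly the same way as the drift term $\na c\cdot\na n$.

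Concretely, set $R_4 = 3^3 R_3$ and fix $R \ge R_4$. Choose a cutoff $\tilde\Phi_R \in C^\infty_0(\R^2)$ supported in $\tilde A_R = \{R/3 \le |x| \le 3R\}$ with $\tilde\Phi_R \equiv 1$ on $A_R$, arranged so that $\tilde A_R$ lies inside the annulus on which \thref{NO-lem3.7}, \thref{lem-u-bound-annulus}, and \thref{NO-lem3.8} apply. Put $m = n\tilde\Phi_R$, which satisfies the heat equation \eqref{NO-eq3.38}--\eqref{NO-eq3.39}. By the three cited lemmas, $n$, $u$, and $\na c$ are bounded on $\tilde A_R \times (0,T)$, while $\na n \in L^2(\tilde A_R \times (0,T))$ by \thref{NO-lem3.4}. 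Hence the zero-order contributions $\tilde\Phi_R n^2$ and $(\De\tilde\Phi_R) n$ to $f$ are in $L^\infty$, while the first-order contributions $-u\cdot\na n\,\tilde\Phi_R$ and $-(2\na\tilde\Phi_R + \tilde\Phi_R\na c)\cdot\na n$ are in $L^2(0,T;L^2(\R^2))$.

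Applying parabolic $L^p$-maximal regularity to $\pd_t m - \De m = f$ on $\R^2 \times (0,T)$ then gives $m \in L^2(0,T;W^{2,2}(\R^2)) \cap H^1(0,T;L^2(\R^2))$, so $\na m$ gains integrability both in space (via the 2D embedding $H^1 \hookrightarrow L^q$ for every finite $q$) and in time (via the trace $L^2_t H^2_x \cap H^1_t L^2_x \hookrightarrow C_t H^1_x$). Reinserting the improved integrability of $\na n$ into \eqref{NO-eq3.39} and iterating finitely many times, one drives $f$ into $L^p(\R^2 \times (0,T))$ with $p > 4$. A final application of parabolic Schauder estimates (Ladyzhenskaya--Solonnikov--Ural'tseva) yields $m \in C^{1+\alpha,(1+\alpha)/2}$, so $\na m$ is bounded on $\R^2 \times (0,T)$; since $\tilde\Phi_R \equiv 1$ on $A_R$, this gives the claimed $L^\infty$ bound on $\na n$.

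The main new point relative to \cite[Lemma 3.9]{NO-2016} is the advection term $-u\cdot\na n\,\tilde\Phi_R$ in $f$. Thanks to the $L^\infty$ bound on $u$ from \thref{lem-u-bound-annulus}, this term satisfies the same integrability estimates as $-\tilde\Phi_R \na c\cdot\na n$, and so the bootstrap of \cite{NO-2016} carries over with only cosmetic modifications; the remainder of the argument is unchanged.
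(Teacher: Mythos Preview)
Your proposal is correct, but it reaches the key intermediate estimate by a different mechanism than the paper. The paper's proof is organized in two steps: in Step~1 it derives the uniform-in-time bound $\sup_{0<t<T}\norm{\na n(t)}_{L^2(A_R)}<\infty$ by an explicit energy computation, i.e., by differentiating the $n$-equation, testing against $\na n\,\tilde\Phi_R$, and absorbing the resulting $\int u\cdot\na n\,(\De n)\,\tilde\Phi_R$ and $\int (u\cdot\na n)(\na n\cdot\na\tilde\Phi_R)$ terms using the $L^\infty$ bound on $u$ from \thref{lem-u-bound-annulus} together with Young's inequality and Gr\"onwall. Only after this does the paper observe that $\sup_t\norm{f(t)}_2<\infty$ and defer to \cite[Lemma 3.9]{NO-2016} for the rest.

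You instead bypass the hands-on energy inequality entirely: starting only from $f\in L^2_{t,x}$ (which needs just $\na n\in L^2(\tilde A_R\times(0,T))$ from \thref{NO-lem3.4}), you invoke $L^2$-maximal regularity for the heat equation and the Lions--Magenes trace embedding $L^2_tH^2_x\cap H^1_tL^2_x\hookrightarrow C_tH^1_x$ to recover the same $L^\infty_tL^2_x$ control on $\na m$, then bootstrap in $L^p$. This is cleaner and more modular; the paper's energy argument is more self-contained and makes the dependence on the $L^\infty$ bounds of $u$, $\na c$, and $n$ explicit at the level of the differential inequality. Both routes lead to the same parabolic bootstrap, and your treatment of the new advection term $-u\cdot\na n\,\tilde\Phi_R$ via \thref{lem-u-bound-annulus} is exactly the point the paper highlights. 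One small comment: your iteration implicitly requires shrinking the annulus at each step (the improved regularity is for $\na m$, which equals $\na n$ only where $\tilde\Phi_R\equiv1$); this is standard and is the reason for the factor $3^3$ in $R_4$, but it would be worth saying explicitly.
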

\begin{proof}~\\
{\bf Step 1.} We claim that
\EQ{\label{NO-eq3.43}
\sup_{0<t<T} \norm{\na n(t)}_{L^2(A_R)} < \infty.
}
Indeed, denoting $c_m(t) = c(t) - [c(t)]_B$, $B=B_{2R}(0)$,
we have
\EQN{
&\frac12\, \frac{d}{dt} \int_{\R^2} |\na n|^2 \tilde \Phi_R\, dx\\
&= \int_{\R^2} u\cdot\na n(\De n) \tilde \Phi_R\, dx - \int_{\R^2} |\De n|^2 \tilde \Phi_R\, dx - \int_{\R^2} \na n\cdot\na c_m(\De n) \tilde \Phi_R\, dx - \int_{\R^2} n^2 \De n \tilde \Phi_R\, dx\\
&\quad + \int_{\R^2} (u\cdot\na n)(\na n\cdot\na \tilde \Phi_R)\, dx - \int_{\R^2} \De n\na n\cdot\na \tilde \Phi_R\, dx + \int_{\R^2} (\na n\cdot\na c_m)(\na n\cdot\na \tilde \Phi_R)\, dx\\
&\quad - \int_{\R^2} n^2\na n\cdot \tilde \Phi_R\, dx\\
&\le \int_{\R^2} u\cdot\na n(\De n) \tilde \Phi_R\, dx - \frac34 \int_{\R^2} |\De n|^2 \tilde \Phi_R\, dx + C\norm{\na c_m}_{L^\infty(\tilde{A}_R)}^2 \int_{\tilde{A}_R} |\na n|^2\, dx + C \int_{\tilde{A}_R} n^4\, dx\\
&\quad + \int_{\R^2} (u\cdot\na n)(\na n\cdot\na \tilde \Phi_R)\, dx + \frac14 \int_{\R^2} |\De n|^2 \tilde \Phi_R\, dx + C(\norm{\na c_m}_{L^\infty(\tilde{A}_R)} + 1) \int_{\tilde{A}_R} |\na n|^2\, dx\\
&\quad + C\int_{\tilde{A}_R} n^4\, dx.
}
Since
\EQN{
\int_{\R^2} u\cdot\na& n(\De n) \tilde \Phi_R\, dx\\ 
&\le \int_{\R^2} |u||\na n||\De n| \tilde \Phi_R\, dx\\
&\le C(\ve) \norm{u(t)}_{L^\infty(\tilde{A}_R)} \int_{\tilde{A}_R} |\na n|^2\, dx + \ve \norm{u(t)}_{L^\infty(\tilde{A}_R)} \int_{\R^2} |\De n|^2 \tilde \Phi_R\, dx\\
&\le C(\ve)\, \norm{u}_{L^\infty(\tilde{A}_R\times (0,T))} \int_{\tilde{A}_R} |\na n|^2\, dx  + \ve\, \norm{u}_{L^\infty(\tilde{A}_R\times (0,T))} \int_{\R^2} |\De n|^2 \tilde \Phi_R\, dx\\
&\le C(\ve)\, \norm{u}_{L^\infty(\tilde{A}_R\times (0,T))} \int_{\tilde{A}_R} |\na n|^2\, dx + \frac14 \int_{\R^2} |\De n|^2 \tilde \Phi_R\, dx
}
by choosing $\ve>0$ sufficiently small, and
\EQN{
\int_{\R^2} (u\cdot\na n) (\na n\cdot\na \tilde \Phi_R)\, dx
&\le CR^{-1} \int_{\R^2} |u||\na n|^2 \tilde \Phi_R^{5/6}\\
&\le CR^{-1} \norm{u(t)}_{L^\infty(\tilde{A}_R)} \int_{\tilde{A}_R} |\na n|^2\\
&\le CR^{-1} \norm{u}_{L^\infty(\tilde{A}_R\times (0,T))} \int_{\tilde{A}_R} |\na n|^2,
}
we obtain
\EQN{
\frac{d}{dt}& \int_{\R^2} |\na n|^2 \tilde \Phi_R\, dx + \int_{\R^2} |\De n|^2 \tilde \Phi_R\, dx\\
& \le C\bkt{\norm{\na c_m}_{L^\infty(\tilde{A}_R)}^2 + \norm{\na c_m}_{L^\infty(\tilde{A}_R)} + (C(\ve) + CR^{-1})\norm{u}_{L^\infty(\tilde{A}_R\times (0,T))} + 1}\int_{\tilde{A}_R} |\na n|^2\, dx\\
&\quad +C\int_{\tilde{A}_R} n^4\, dx.
}
The claim of {\bf Step 1}, \eqref{NO-eq3.43} is a direct consequence of the boundedness of $\nabla c$, $n$ and $u$ in the annulus (Lemma \ref{NO-lem3.7}, Lemma \ref{lem-u-bound-annulus}, Lemma \ref{NO-lem3.8}) and a use of Gr\"onwall's inequality.

\medskip

\noindent{\bf Step 2.} 
Let $f$ be as in \eqref{NO-eq3.39}. By \eqref{NO-eq3.43} and 
\EQN{
\norm{(u\cdot\na n) \tilde \Phi_R}_2 
&\le \norm{u(t)}_{L^\infty(\tilde{A}_R)} \bke{\int_{\R^2} |\na n|^2 \tilde \Phi_R\, dx}^{1/2}\\
&\le \norm{u}_{L^\infty(\tilde{A}_R\times (0,T))} \bke{\int_{\R^2} |\na n|^2 \tilde \Phi_R\, dx}^{1/2},
}
we have
\[
\sup_{0<t<T} \norm{f(t)}_2<\infty
\]
in view of \thref{lem-u-bound-annulus}. The rest of the proof is identical to that of \cite[Lemma 3.9]{NO-2016}.
\end{proof}

\medskip

\begin{lem}\thlabel{NO-lem3.10}
There exists $R_5\ge 3 R_4$ such that for any $R\ge R_5$, $\na^2 c$ is bounded on $A_R\times(0,T)$.
\end{lem}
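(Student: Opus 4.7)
The plan is to apply interior Schauder estimates to the Poisson equation $-\Delta c = n$ on small balls contained in the previously-controlled annulus $A_{R_4}$. By \thref{NO-lem3.8} and \thref{NO-lem3.9}, both $n$ and $\na n$ are bounded on $A_{R_4}\times(0,T)$, so $n(\cdot,t)$ is uniformly Lipschitz continuous on $A_{R_4}$, hence in $C^{0,\alpha}(A_{R_4})$ for every $\alpha\in(0,1)$ with a bound independent of $t$. By \thref{NO-lem3.7}, the mean-centered potential $c_m(\cdot,t) = c(\cdot,t) - [c(\cdot,t)]_B$ with $B=B_{2R}(0)$ is also bounded on $A_{R_4}$ uniformly in $t$, and clearly $\na^2 c = \na^2 c_m$.

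First I would fix $R_5 \ge 3R_4$ large enough so that each point $x_0\in A_{R_5}$ admits a ball $B(x_0, \rho_0)$ contained in $A_{R_4}$, where $\rho_0$ is a fixed positive number proportional to $R_4$. On every such ball, $c_m$ solves $-\Delta c_m = n$ with $n\in C^{0,\alpha}$ and $c_m\in L^\infty$, so the standard interior Schauder estimate yields
\[
\norm{c_m}_{C^{2,\alpha}(B(x_0,\rho_0/2))} \le C\bke{\norm{n}_{C^{0,\alpha}(B(x_0,\rho_0))} + \norm{c_m}_{L^\infty(B(x_0,\rho_0))}},
\]
in which $C$ depends only on $\rho_0$ and $\alpha$. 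Since the right-hand side is bounded uniformly in $x_0\in A_{R_5}$ and $t\in(0,T)$ by the lemmas cited above, covering $A_{R_5}$ by finitely many such balls immediately gives a uniform bound on $\na^2 c$ on $A_{R_5}\times(0,T)$.

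The main obstacle is just the geometric bookkeeping required to ensure the Schauder balls lie strictly inside the annulus $A_{R_4}$ where both $n$ and $\na n$ are controlled; this is what forces the enlargement $R_5 \ge 3R_4$. Beyond this, the argument is a standard application of interior elliptic regularity and mirrors the corresponding step in \cite{NO-2016} for the decoupled PKS system. Notably, the Navier--Stokes coupling enters only implicitly through the previously established bounds on $n$ and $\na n$ and plays no direct role in the estimate for $\na^2 c$ at this stage; in particular, the $L^\infty$ bound on $u$ from \thref{lem-u-bound-annulus} is not invoked here.
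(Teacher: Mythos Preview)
Your approach is correct and matches the paper's, which simply records that the result follows from \thref{NO-lem3.8} and \thref{NO-lem3.9} via the proof of \cite[Lemma~3.10]{NO-2016} --- precisely the interior elliptic (Schauder) regularity argument you describe, with the Navier--Stokes coupling playing no direct role. One minor correction to your geometric bookkeeping: the balls $B(x_0,\rho_0)$ for $x_0\in A_R$, $R\ge R_5$, cannot all lie in the fixed annulus $A_{R_4}$ (points with $|x_0|>2R_4$ are already outside it); rather, each such ball lies in $A_{R'}$ with, say, $R'=|x_0|\ge R_5/2\ge R_4$, where \thref{NO-lem3.7}--\thref{NO-lem3.9} equally apply.
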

\begin{proof}
The lemma is a consequence of \thref{NO-lem3.8} and \thref{NO-lem3.9} following the same proof of \cite[Lemma 3.10]{NO-2016}.
\end{proof}

\medskip

\begin{lem}\thlabel{NO-lem3.11}
There exists $R_6\ge 3^2 R_5$ such that for any $R\ge R_6$, $\na^2 n$ is bounded on $A_R\times(0,T)$.
\end{lem}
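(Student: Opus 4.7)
The plan is to mirror the two-step structure of \thref{NO-lem3.9}. First I would choose $R_6 \ge 3^2 R_5$ large enough that for any $R \ge R_6$ the enlarged annulus $\tilde A_R = \{R/3 \le |x| \le 3R\}$ lies inside the region where \thref{NO-lem3.7}, \thref{lem-u-bound-annulus}, \thref{NO-lem3.8}, \thref{NO-lem3.9}, and \thref{NO-lem3.10} apply, so that $u$, $n$, $\na n$, $\na c$, and $\na^2 c$ are all pointwise bounded on $\tilde A_R \times (0,T)$. With $m = n\tilde\Phi_R$ satisfying $\pd_t m - \De m = f$ as in \eqref{NO-eq3.38}--\eqref{NO-eq3.39}, every factor appearing on the right-hand side of \eqref{NO-eq3.39} is then bounded and compactly supported, so $f \in L^\infty(\R^2 \times (0,T))$ after extending by zero outside $\operatorname{supp} \tilde\Phi_R$.

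In Step 1 I would obtain $\sup_{0<t<T}\|\na^2 n(t)\|_{L^2(A_R)}<\infty$ by testing $\pd_t m - \De m = f$ against $-\De m$, which after absorbing the cross term gives
\[
\frac{d}{dt}\int_{\R^2}|\na m|^2\,dx + \int_{\R^2}|\De m|^2\,dx \le \int_{\R^2} f^2\,dx,
\]
and then using the $L^\infty$ boundedness and compact support of $f$ together with Gr\"onwall to conclude both $\sup_t \|\na m\|_2^2 <\infty$ and $\sup_t \|\De m\|_2^2 < \infty$ (the latter via an additional differentiation in time or by parabolic $L^2$-maximal regularity). In Step 2 I would upgrade this to a pointwise bound by bootstrapping on the linear heat equation $\pd_t m - \De m = f$: since $f \in L^\infty$ is compactly supported and $m(0) = n_0 \tilde\Phi_R$ is bounded (using \thref{Nagai-2011-rmk2.1}, we may assume $n_0 \in L^p$ for any fixed $p$ on $\operatorname{supp} \tilde\Phi_R$), parabolic $L^p$-maximal regularity gives $\na^2 m \in L^p(\R^2 \times (0,T))$ for every $p<\infty$, and Sobolev embedding then yields $\na m \in L^\infty(0,T; C^{0,\alpha}(\R^2))$ for every $\alpha \in (0,1)$.

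The hard part is the final upgrade to $\na^2 m \in L^\infty$, which requires $f$ to be H\"older continuous in $(x,t)$; this in turn demands H\"older regularity of $u$, $n$, and $\na n$ on $\tilde A_R$ (while $\na c$ is already Lipschitz by \thref{NO-lem3.10}). I would obtain these by localizing the equations for $n$ and $u$ to $\tilde A_R$ and applying parabolic H\"older estimates (De Giorgi--Nash--Moser, or standard $L^p$ Schauder interpolation) to their cutoff versions, using the already-established $L^\infty$ bounds on all drifts and forcings on $\tilde A_R$. Once $f \in C^{0,\alpha}(\tilde A_R \times (0,T))$ is in place, parabolic Schauder theory for the heat equation delivers $\na^2 m \in L^\infty(\R^2 \times (0,T))$, and since $\tilde\Phi_R \equiv 1$ on $A_R$, this yields exactly the claimed pointwise bound on $\na^2 n$ over $A_R \times (0,T)$.
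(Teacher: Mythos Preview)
Your proposal is correct and follows essentially the same approach as the paper: the key point is that all factors in $f$ from \eqref{NO-eq3.39} are bounded on $\tilde A_R\times(0,T)$, so $\sup_{0<t<T}\norm{f(t)}_\infty<\infty$, after which parabolic regularity for the heat equation $\pd_t m-\De m=f$ yields the pointwise bound on $\na^2 m$ and hence on $\na^2 n$ in $A_R$. The paper's proof is far terser---it checks only the one new coupling term $u\cdot\na n\,\tilde\Phi_R$ is in $L^\infty$ (via \thref{lem-u-bound-annulus} and \thref{NO-lem3.9}) and then refers to \cite[Lemma 3.11]{NO-2016} for the remainder; your write-up essentially supplies the details hidden behind that citation.
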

\begin{proof}
Let $f$ be as in \eqref{NO-eq3.39}. Since $\sup_{0<t<T}\norm{u(t)\cdot\na n(t) \tilde \Phi_R}_\infty<\infty$, we have
\[
\sup_{0<t<T}\norm{f(t)}_\infty<\infty.
\]
The rest of the proof is identical to that of \cite[Lemma 3.11]{NO-2016}.
\end{proof}

\medskip

\begin{prop}\thlabel{NO-prop3.12}
There exists $\tilde R_0>R_0$ such that for any $R\ge \tilde R_0$, the followings hold:
\EN{
\item [(i)] $\pd_t^k \na_x^l n$ $(0\le 2k+l\le 2)$ are bounded on $A_R\times(0,T)$.
\item [(ii)] $c_m$, $\na_x^l c$ $(1\le l\le2)$, and $u$ are bounded on $A_R\times(0,T)$.
\item [(iii)] there exist $x_0$ satisfying $R<|x_0|<2R$, and $\ve_0>0$, $\de>0$ such that
\[
n(x,t) \ge \de \qquad \text{ for } |x-x_0|\le \ve_0,\,0\le t<T.
\]
}
\end{prop}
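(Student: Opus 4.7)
The plan is to assemble (i) and (ii) by collecting the preceding annular regularity estimates and reading $\pd_t n$ off from the PDE, and then to establish (iii) via the strong maximum principle together with a parabolic comparison argument. Take $\tilde R_0 := R_6$, the largest threshold appearing in Lemmas \thref{NO-lem3.7}--\thref{NO-lem3.11}.

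For (i), the bounds on $n$, $\na n$, $\na^2 n$ on $A_R\times(0,T)$ are exactly Lemmas \thref{NO-lem3.8}, \thref{NO-lem3.9}, \thref{NO-lem3.11}. The bound on $\pd_t n$ follows by rewriting the density equation via $-\De c = n$ as
\[
\pd_t n = \De n - u\cdot\na n - \na n\cdot\na c + n^2,
\]
where every term on the right is bounded on $A_R\times(0,T)$ by the lemmas just cited together with \thref{lem-u-bound-annulus}. For (ii), the bounds on $c_m$ and $\na c$ are Lemma \thref{NO-lem3.7}, the bound on $\na^2 c$ is Lemma \thref{NO-lem3.10}, and the bound on $u$ is \thref{lem-u-bound-annulus}.

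For (iii), item (vi) of \thref{local-exist} guarantees $n(x,t)>0$ throughout $\R^2\times(0,T)$, since $n_0\not\equiv 0$ (as $M=8\pi$). By (i), $n$ is continuous on $\overline{A_R}\times\{t_*\}$ for any fixed $t_*\in(0,T)$, so $n(\cdot,t_*)$ attains a strictly positive minimum on the compact annulus, and we may choose $x_0$ with $R<|x_0|<2R$ and $\ve_0,\de_1>0$ such that $n(\cdot,t_*)\ge\de_1$ on $\{|x-x_0|\le\ve_0\}$. Rewriting the density equation as $\pd_t n+(u+\na c)\cdot\na n-\De n = n^2\ge 0$ with drift $u+\na c$ bounded on $A_R\times(0,T)$ by (i)--(ii), a classical smooth positive sub-solution (a smooth bump centered at $x_0$ times a decaying exponential in $t$, the decay rate being controlled by $\|u+\na c\|_{L^\infty(A_R\times(0,T))}$) whose support is kept strictly inside $\{|x-x_0|\le\ve_0\}$ propagates the lower bound forward to $[t_*,T)$; the interval $[0,t_*]$ is then handled by uniform continuity in $t$ (guaranteed by (i)) together with the strong maximum principle applied at arbitrarily small positive times. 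The main obstacle lies in the careful tuning inside this sub-solution construction and in the $[0,t_*]$ piece, which implicitly requires $n_0$ to be non-degenerate near $x_0$ (or that one interpret (iii) on $(0,T)$); the coefficient bounds from (i)--(ii) are precisely what make the classical parabolic comparison available.
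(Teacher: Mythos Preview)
Your handling of (i) and (ii) is correct and coincides with the paper's: both are assembled from \thref{NO-lem3.7}--\thref{NO-lem3.11} and \thref{lem-u-bound-annulus}, with $\partial_t n$ read off from the equation exactly as you do.

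For (iii) the arguments diverge. The paper does not use a barrier. Instead it first shows that $n$, $\nabla n$, $\nabla^2 n$ are uniformly H\"older continuous on $[T/2,T)\times\tilde A_R^*$ by applying local parabolic regularity estimates successively to the divergence--form equations satisfied by $n$, $\nabla n$ and $\partial_j\partial_k n$, using the boundedness of $n(u+\nabla c)$ and its derivatives from (i)--(ii). This H\"older continuity extends $n$ to the \emph{closed} time interval $[T/2,T]$, after which the paper defers to \cite[Proposition 3.12]{NO-2016}: positivity on a compact space--time set then yields the uniform lower bound. The Schauder/H\"older step is precisely what replaces your forward propagation to $t\to T^-$.

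Your comparison route is a legitimate alternative in spirit, but as written it has a gap. For a generic smooth bump $\phi$ supported in $\{|x-x_0|\le\ve_0\}$ the ratio $|\nabla\phi|/\phi$ blows up at the edge of the support, so the inequality
\[
-\mu\phi+(u+\nabla c)\cdot\nabla\phi-\Delta\phi\le 0
\]
cannot be forced by taking $\mu$ large. This is repairable: take $\phi=(\ve_0^2-|x-x_0|^2)_+^k$ with $k\ge2$, so that near the boundary $-\Delta\phi$ is of order $(\ve_0^2-r^2)^{k-2}$ with the favorable sign and dominates the drift term of order $(\ve_0^2-r^2)^{k-1}$; alternatively, invoke directly the interior parabolic Harnack inequality for nonnegative supersolutions of $\partial_t v-\Delta v+(u+\nabla c)\cdot\nabla v=0$ with bounded drift. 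Either fix should be stated rather than hidden under ``classical.'' Your caveat about the interval $[0,t_*]$ is well taken and applies to the paper's formulation too: nothing in \eqref{initial-n} forces a pointwise lower bound on $n_0$, and the downstream application in \thref{NO-prop3.16} (hence \thref{NO-thm3.1}) only needs the lower bound for $t\ge t_0>0$, so reading (iii) on $(0,T)$ is the correct interpretation.
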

\begin{proof}
The assertions (i) and (ii) follow from the preceding lemmas. 

\medskip

To prove (iii), we claim that $\na^l n$ $(0\le l\le 2)$ are uniformly H\"{o}lder continuous on $\tilde{A}_R^* \times [T/2,T)$, where $\tilde{A}_R^*:=\{x\in\R^2: 2R/3\le |x|\le 8R/3\}$.
\EN{
\item [(a)] Since $n$ satisfies $\pd_t n - \De n = -\na\cdot(n(u+\na c))$ and $n(u+\na c)$ is bounded on $\{x\in\R^2:R/3<|x|<3R\}\times(0,T)$, applying local Schauder estimates for parabolic equations, we see that $n$ is uniformly H\"{o}lder continuous on $ \tilde{A}_R^*\times [T/2,T)$.
\item [(b)] Similarly, by $\pd_t (\na n) - \De (\na n) = -\na\cdot(\na(n(u+\na c)))$ and the boundedness of $\na(n(u+\na c))$ in $\{R/3<|x|<3R\}\times (0,T)$, the uniformly H\"{o}lder continuity of $\na n$ in $\tilde{A}_R^*\times [T/2,T)$ is deduced.
\item [(c)] For $\na^2 n$, observe that
\[
\pd_t(\pd_k\pd_j n) - \De (\pd_k\pd_j n) + \na c\cdot\na (\pd_k\pd_j n) - 2n (\pd_k\pd_j n) = \pd_k g_1 + g_2
\]
where $g_1 = - \pd_j(u\cdot\na n) - \na n\cdot\na(\pd_j c)$ and $g_2 = -\na \pd_j n \cdot \na \pd_k c + 2 \pd_kn\pd_jn$. Hence, by the boundedness of $g_1$ and $g_2$ on $\{R/3<|x|<3R\}\times(0,T)$, $\pd_k\pd_j n$ is uniformly H\"{o}lder continuous on $\tilde{A}_R^*\times [T/2,T)$.
}

\medskip

The rest of the proof is identical to that of \cite[Proposition 3.12]{NO-2016} given that $u+\na c$ is bounded on $\{x\in\R^2: R<|x|<2R\}\times [T/2,T)$.
\end{proof}

\medskip

\subsection{Entropy estimates for interior regions}

\medskip

We now derive the a priori estimate for small $|x|$. Define $\Psi_R(x) = \Psi(R^{-1}x)$ where $\Psi\in C^\infty(\R^2)$, $\Psi(x)\in[0,1]$, $\Psi(x)=1$ for $|x|\le1$, $\textup{supp}\,\Psi\subset B_2(0)$.
It follows from the definition of $\Psi_R$ that
\[
\text{supp}\,[\na\Psi_R] \subset A_R,\qquad \text{supp}\,[\De\Psi_R] \subset A_R,
\]
where $A_R$ is an annulus defined in \eqref{def-AR}.

\medskip

We define
\EQN{
H_{\text{int}}(t;R) :=& \int_{\R^2} \bkt{(1+n(t))\log(1+n(t)) - n(t)} \Psi_R\, dx - \frac12 \int_{\R^2} n(t) c_m(t) \Psi_R\, dx\\
& + \frac12 \int_{\R^2} |u|^2 \Psi_R\, dx,
}
where $c_m(t) = c(t) - [c(t)]_B$, $[c(t)]_B=(1/|B|)\int_B c(t)\, dx$, $B=B_{2R}(0)$.

\medskip

\begin{lem}\thlabel{NO-lem3.13}
\EQN{
\frac{d}{dt}\, H_{\text{int}}(t;R) + \int_{\R^2}& n|\na \log(1+n(t)) - \na c_m(t)|^2 \Psi_R\, dx + \int_{\R^2} |\na \log(1+n(t))|^2 \Psi_R\,dx \\
&+ \int_{\R^2} |\na u|^2 \Psi_R\, dx = \int_{\R^2} n(t) \log(1+n(t)) \Psi_R\, dx + F_1(t) + F_2(t) + F_3(t),
}
where
\EQN{
F_1(t) =& \int_{\R^2} \bkt{(1+n(t))\log(1+n(t)) - n(t)} \De\Psi_R\, dx\\
& - \int_{\R^2} n(t) c_m(t) \na c_m(t) \cdot\na \Psi_R\, dx + \int_{\R^2} c_m(t)\na n(t)\cdot \na\Psi_R\, dx\\
&~ + \int_{\R^2}\bkt{n(t)\log(1+n(t)) - \log(1+n(t))} \na c_m(t)\cdot\na\Psi_R\, dx,
}
\[
F_2(t) = -\int_{\R^2} \pd_tc_m(t) \na c_m(t)\cdot \na\Psi_R\, dx - \frac14\,\frac{d}{dt} \int_{\R^2} |c_m(t)|^2 \De \Psi_R\, dx,
\]
and
\EQ{\label{eq-F3}
F_3(t) 
=& \int_{\R^2} \bkt{(1+n)\log(1+n) - n} u\cdot\na \Psi_R\, dx + \frac12\int_{\R^2} |u|^2 u\cdot\na\Psi_R\, dx - \int_{\R^2} u\cdot\na P \Psi_R\, dx\\
& - \int_{\R^2} (u\cdot\na)u\cdot\na \Psi_R\, dx - \int_{\R^2} nc_mu\cdot\na\Psi_R\, dx.
}
\end{lem}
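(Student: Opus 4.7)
The plan is a direct computation: differentiate each of the three pieces of $H_{\text{int}}$ in time using the respective PDE from \eqref{PKS-NS}, integrate by parts to expose the dissipative structure, and gather all terms that carry a derivative of $\Psi_R$ into the cutoff errors $F_1,F_2,F_3$. Write $H_{\text{int}}=I+II+III$ with
\[
I=\int_{\R^2}G(n)\Psi_R\,dx,\quad II=-\tfrac12\int_{\R^2}nc_m\Psi_R\,dx,\quad III=\tfrac12\int_{\R^2}|u|^2\Psi_R\,dx,
\]
where $G(s)=(1+s)\log(1+s)-s$ so that $G'(s)=\log(1+s)$ and $G''(s)=1/(1+s)$.

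For $\frac{dI}{dt}$, I would substitute $\pd_t n=\Delta n-u\cdot\nabla n-\nabla\cdot(n\nabla c_m)$ (using $\nabla c=\nabla c_m$) and apply the chain rule. The identity $\nabla G(n)=\log(1+n)\nabla n$ lets me integrate the diffusion twice, producing the ``good'' dissipation $-\int(1+n)|\nabla\log(1+n)|^2\Psi_R$ together with the boundary term $\int G(n)\Delta\Psi_R$. Since $\nabla\cdot u=0$, the transport term becomes $\int G(n)u\cdot\nabla\Psi_R$. The chemotactic term yields the interaction $\int n\nabla\log(1+n)\cdot\nabla c_m\Psi_R$ plus $\int n\log(1+n)\nabla c_m\cdot\nabla\Psi_R$. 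For $\frac{dII}{dt}$, the key is to use $-\Delta c_m=n$ to rewrite $\int nc_m\Psi_R=\int|\nabla c_m|^2\Psi_R-\tfrac12\int c_m^2\Delta\Psi_R$; differentiating in time and integrating by parts twice produces a symmetric representation
\[
\tfrac{dII}{dt}=-\int\pd_t n\,c_m\Psi_R\,dx-\int\pd_tc_m\,\nabla c_m\cdot\nabla\Psi_R\,dx-\tfrac14\tfrac{d}{dt}\int c_m^2\Delta\Psi_R\,dx,
\]
with the last two terms being exactly $F_2$. I would then substitute the $n$-equation into $-\int\pd_t n\,c_m\Psi_R$ and integrate by parts; this generates the second copy of $\int n\nabla\log(1+n)\cdot\nabla c_m\Psi_R$, the cross-dissipation $-\int n|\nabla c_m|^2\Psi_R$, the cancelling coupling $-\int nu\cdot\nabla c_m\Psi_R$, and several cutoff pieces that match entries of $F_1$ (the $\int c_m\nabla n\cdot\nabla\Psi_R$, $-\int nc_m\nabla c_m\cdot\nabla\Psi_R$, $-\int nc_m u\cdot\nabla\Psi_R$ contribute to $F_1,F_3$, and the missing $\log(1+n)$ piece that turns into $F_1$'s fourth term arises from the extra manipulation below). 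For $\frac{dIII}{dt}$, I dot the Navier--Stokes equation with $u\Psi_R$: the viscous term gives $-\int|\nabla u|^2\Psi_R$ plus a $\Psi_R$-second-derivative term, the convective term collapses (using $\nabla\cdot u=0$) to $\tfrac12\int|u|^2u\cdot\nabla\Psi_R$, the pressure term is kept as $-\int u\cdot\nabla P\Psi_R$, and the Lorentz force produces $+\int nu\cdot\nabla c_m\Psi_R$, which cancels the one from $\frac{dII}{dt}$.

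Summing $\frac{dI}{dt}+\frac{dII}{dt}+\frac{dIII}{dt}$, the dissipative terms combine via
\[
-\int(1+n)|\nabla\log(1+n)|^2\Psi_R+2\int n\nabla\log(1+n)\cdot\nabla c_m\Psi_R-\int n|\nabla c_m|^2\Psi_R
\]
$=-\int|\nabla\log(1+n)|^2\Psi_R-\int n|\nabla\log(1+n)-\nabla c_m|^2\Psi_R$. The one remaining cross term $\int\nabla\log(1+n)\cdot\nabla c_m\Psi_R$ I would integrate by parts (putting $\nabla$ on $\nabla c_m$ and using $-\Delta c_m=n$), which produces exactly $+\int n\log(1+n)\Psi_R$ on the right-hand side together with the cutoff remainder $-\int\log(1+n)\nabla c_m\cdot\nabla\Psi_R$; combined with the $+\int n\log(1+n)\nabla c_m\cdot\nabla\Psi_R$ from $\frac{dI}{dt}$ this is precisely the fourth contribution to $F_1$. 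Collecting all terms carrying $\nabla\Psi_R$ or $\Delta\Psi_R$ into $F_1,F_2,F_3$ as stated finishes the identity.

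The main obstacle is purely bookkeeping: there are roughly a dozen boundary/cutoff integrals, and one has to symmetrize the $\tfrac{d}{dt}\int nc_m\Psi_R$ carefully (this is why the representation $\int nc_m\Psi_R=\int|\nabla c_m|^2\Psi_R-\tfrac12\int c_m^2\Delta\Psi_R$ is convenient, as it isolates the unavoidable $\pd_t c_m$ contributions into $F_2$). The only genuinely new ingredient compared with the pure Patlak--Keller--Segel derivation in \cite{NO-2016} is the kinetic piece $\frac{dIII}{dt}$: the Lorentz force $n\nabla c_m\cdot u$ cancels against the transport term in the $nc_m$-identity (reflecting the fact that the coupling is energy-neutral, as in \thref{free-energy}), so no new interaction terms appear outside of the cutoff region, and the only contributions that survive are those collected in $F_3$.
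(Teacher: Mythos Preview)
Your proposal is correct and follows the same route as the paper, which simply states that the proof is identical to \cite[Lemma 3.13]{NO-2016} with the additional term $F_3$ generated by the velocity coupling. Your sketch spells out precisely that computation, including the key observation that the force term $\int n\,u\cdot\nabla c_m\,\Psi_R$ from the Navier--Stokes energy cancels the transport contribution from the $nc_m$-identity, so that only cutoff terms (those in $F_3$) survive from the coupling.
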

\begin{proof}
The proof is identical to that of \cite[Lemma 3.13]{NO-2016} with an addition term $F_3(t)$ generated by the coupling of the velocity field $u$.
\end{proof}

\medskip

\begin{lem}\thlabel{NO-lem3.14}
For $F_1(t)$ in \thref{NO-lem3.13}, it holds that
\[
\sup_{0<t<T}|F_1(t)|<\infty.
\]
\end{lem}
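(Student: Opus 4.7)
The plan is simply to exploit the crucial structural fact that every one of the four integrands in $F_1(t)$ carries a factor of $\na\Psi_R$ or $\De\Psi_R$, both of which are supported inside the annulus $A_R=\{R/2\le|x|\le 2R\}$. Consequently each integral reduces to an integral over the bounded set $A_R$, on which \thref{NO-prop3.12} furnishes uniform-in-time $L^\infty$ bounds on $n$, $\na n$, $c_m$, and $\na c_m$ (provided we fix $R\ge\tilde R_0$). Combining those bounds with the finiteness of $|A_R|$ will immediately yield a constant, depending only on $R$, $T$ and the data, dominating each term.

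More concretely, I would treat the four pieces in turn. Since $|\De\Psi_R|\lec R^{-2}\mathbf{1}_{A_R}$ and $|n(x,t)|\le C$ on $A_R\times(0,T)$, the first integral is bounded by $C\,(1+\|n\|_{L^\infty(A_R\times(0,T))})\log(1+\|n\|_{L^\infty(A_R\times(0,T))})|A_R|R^{-2}$. For the second and fourth integrals, I would use $|\na\Psi_R|\lec R^{-1}\mathbf{1}_{A_R}$ together with the uniform bounds on $n$, $c_m$ and $\na c_m$ provided by \thref{NO-prop3.12}(i)-(ii), and the elementary estimate $|n\log(1+n)|+|\log(1+n)|\le C(1+n)$. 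For the third integral, the bound is the same, using the uniform bound on $\na n$ from \thref{NO-prop3.12}(i). In each case, the integrand is pointwise bounded on $A_R$ by a constant depending on $R$, $T$ and the data, and the integration is over a set of finite measure.

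Since all four bounds are independent of $t\in(0,T)$, taking the supremum yields $\sup_{0<t<T}|F_1(t)|<\infty$, as desired.

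There is essentially no obstacle once \thref{NO-prop3.12} is in hand; the statement is really a bookkeeping consequence of the uniform interior-annulus regularity established in the previous subsection. The only point requiring a moment of care is that the factor $c_m$ rather than $c$ appears, but this is precisely why the ball $B=B_{2R}(0)$ used to define $c_m$ contains $A_R$, so that $\|c_m\|_{L^\infty(A_R\times(0,T))}$ inherits the bound from \thref{NO-prop3.12}(ii).
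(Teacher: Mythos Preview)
Your proposal is correct and matches the paper's approach. The paper's own proof simply reads ``identical to that of \cite[Lemma 3.14]{NO-2016}'', and that argument is exactly what you describe: observe that every integrand in $F_1$ carries a factor of $\na\Psi_R$ or $\De\Psi_R$, hence is supported in $\{R\le|x|\le 2R\}\subset A_R$, and then invoke the uniform-in-time $L^\infty$ bounds on $n$, $\na n$, $c_m$, and $\na c_m=\na c$ over $A_R\times(0,T)$ supplied by \thref{NO-prop3.12}(i)--(ii).
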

\begin{proof}
The proof is identical to that of \cite[Lemma 3.14]{NO-2016}.
\end{proof}

\medskip

\begin{lem}\thlabel{NO-lem3.15}
For $F_2(t)$ in \thref{NO-lem3.13}, it holds that
\[
\sup_{0<t<T}\left|\int_0^t F_2(s)\, ds\right|<\infty.
\]
\end{lem}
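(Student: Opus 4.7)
My plan is to follow the strategy of \cite[Lemma 3.15]{NO-2016} for the pure PKS system, with the adaptations needed to accommodate the fluid coupling. Split $\int_0^t F_2(s)\, ds = I_1(t) + I_2(t)$ corresponding to the two terms in $F_2$.

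For $I_2(t)$, the integrand is a perfect time derivative, so the fundamental theorem of calculus gives
\[
I_2(t) = -\frac14 \int_{\R^2} |c_m(t)|^2 \De \Psi_R\, dx + \frac14 \int_{\R^2} |c_m(0)|^2 \De \Psi_R\, dx.
\]
Since $\operatorname{supp}(\De \Psi_R) \subset A_R$ and $c_m$ is uniformly bounded on $A_R \times [0,T)$ by Proposition \thref{NO-prop3.12}(ii) (together with the continuity of the mild solution up to $t=0$), $|I_2(t)|$ is uniformly bounded in $t\in(0,T)$.

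For $I_1(t) = -\int_0^t \int_{\R^2} \pd_s c_m \, \na c_m \cdot \na \Psi_R\, dxds$, I would exploit the Poisson relation $-\De c = n$, which gives $-\De \pd_s c = \pd_s n$, together with the evolution equation $\pd_s n = \De n - \na \cdot(n(\na c + u))$ (using $\na\cdot u = 0$). Because $\na c_m = \na c$ and the additional piece $-\pd_s [c]_B$ of $\pd_s c_m$ is constant in $x$, it may be handled separately via $\int \na c \cdot \na \Psi_R\, dx = -\int c_m \De\Psi_R\, dx$ and the annular bound on $c_m$. The main contribution is tested against the auxiliary Newtonian potential $\psi_R$ solving $-\De \psi_R = \na \cdot(\na c \, \na \Psi_R)$, which is well-defined since the source is compactly supported in $A_R$ where $\na c \in L^\infty$ by Proposition \thref{NO-prop3.12}(ii). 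Integration by parts then recasts $I_1$ as space--time integrals of $n$ and $n(\na c + u)$ against $\psi_R$ or $\na \psi_R$, plus time-boundary terms, each of which is controlled by the a priori annular bounds and mass conservation $\norm{n(s)}_1 = M$.

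The main obstacle, as in the PKS case, is the \emph{nonlocal} dependence of $\pd_s c$ on $n$: although $\na \Psi_R$ localizes the integration in $x$ to the annulus $A_R$, the potential $\psi_R$ probes $n$ globally through the two-dimensional logarithmic kernel, so one must use the global mass bound together with the explicit Newton kernel asymptotics to close the estimate. The new advection term $u \cdot \na n$ brought in by the fluid equation introduces no additional difficulty: $u$ is uniformly bounded on $A_R$ by Lemma \thref{lem-u-bound-annulus} and enters the analysis exactly as $\na c$ does, so the remainder of the proof mirrors \cite[Lemma 3.15]{NO-2016} verbatim.
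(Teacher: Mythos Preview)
Your handling of $I_2$ is fine and matches the paper in spirit. For $I_1$, however, your route diverges from what the paper (and \cite[Lemma~3.15]{NO-2016}) actually does, and it is not accurate to say the remainder ``mirrors \cite[Lemma~3.15]{NO-2016} verbatim.''

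The paper does \emph{not} introduce a global Newtonian potential $\psi_R$. Instead, it follows \cite{NO-2016} by introducing an auxiliary cutoff $\tilde\Psi_R=\Psi_R J_R$ whose support is \emph{entirely contained in the annulus} $A_R$, and then rewrites
\[
F_2(t)=\underbrace{\tfrac14\,\frac{d}{dt}\Big(\int c_m^2\,\De\tilde\Psi_R-2\!\int|\na c_m|^2\tilde\Psi_R\Big)}_{F_{21}}+\underbrace{\int \pd_t n\, c_m\,\tilde\Psi_R\,dx}_{F_{22}}.
\]
Here $F_{21}$ is a total time derivative of annular quantities, hence bounded by \thref{NO-prop3.12}(ii). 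For $F_{22}$ one substitutes $\pd_t n=\De n-\na n\cdot(\na c_m+u)+n^2$ and integrates by parts; all resulting integrals are supported in $A_R$, where $n,\na n,c_m,\na c_m$ and $u$ are pointwise bounded by \thref{NO-prop3.12}(i)--(ii). The velocity $u$ enters only through the annular term $\int\na n\cdot u\,\tilde\Psi_R$ (or its variants), which is controlled by \thref{lem-u-bound-annulus}. The argument therefore never leaves the annulus and never needs global bounds on $n$, $\na c$, or $u$.

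By contrast, your proposed potential $\psi_R$ (whose defining equation is ambiguous as written) would convert $I_1$ into integrals of the form $\int n(\na c+u)\cdot\na\psi_R\,dx$ over all of $\R^2$. Bounding this requires either $\|n(\na c+u)\|_1$ uniformly in time or global $L^\infty$ control of $\na c+u$; neither follows from ``mass conservation $\|n\|_1=M$'' and the annular bounds alone, since the interior estimates are precisely what you are in the process of establishing. One can salvage the step by invoking the mild-solution bounds $t^{1/4}\|n(t)\|_{4/3},\,t^{1/4}\|u(t)\|_4\in L^\infty(0,T)$ from \thref{local-exist}, but this is not what you claimed, and in 2D you must also justify the integration by parts carefully since the Newtonian potential of a source with nonzero mean grows like $\log|x|$. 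The paper's localization via $\tilde\Psi_R$ sidesteps all of this.
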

\begin{proof}
We define $\tilde\Psi_R$ as in the proof of \cite[Lemma 3.15]{NO-2016}. Indeed, $\tilde\Psi_R(x):=\Psi_R(x) J_R(x)$, where $J_R\in C_0^\infty(\R^2)$, $J_R(x)\in[0,1]$, $J_R(x)=1$ for $R\le|x|\le2R$, 
\[
\textup{supp}\,J_R\subset B_{3R}(0)\setminus \overline{B_{R/2}(0)},
\] 
and
\[
|\na J_R(x)|\le CR^{-1},\qquad |\De J_R(x)|\le CR^{-2}.
\]
The support of $\tilde\Psi_R$ is contained in $A_R$, where $A_R$ is an annulus defined in \eqref{def-AR}. In terms of $\tilde\Psi_R$, $F_2$ can be rewritten as 
\EQN{
F_2(t)
&= \frac14\, \frac{d}{dt} \bket{\int_{\R^2}c_m^2\De\tilde\Psi_R\, dx - 2|\na c_m|^2\tilde\Psi_R\, dx}\\
&\quad + \int_{\R^2} \bkt{\De n - \na n\cdot(\na c_m + u) + n^2} c_m\tilde\Psi_R\, dx\\
&=: F_{21}(t) + F_{22}(t),
}
where $F_{21}(t)$ is bounded as shown in the proof of \cite[Lemma 3.15]{NO-2016}. For $F_{22}(t)$, we rewrite it as 
\[
F_{22}(t) 
= -2\int_{\R^2} \na n\cdot(\na c_m + u)\tilde\Psi_R\, dx - \int_{\R^2} \na n\cdot\na \tilde\Psi_R c_m\, dx + \int_{\R^2} n^2c_m\tilde\Psi_R\, dx.
\]
Since the integrands in $F_{22}(t)$ are bounded on $(0,T)\times\tilde{A}_R$ by (i) and (ii) of \thref{NO-prop3.12}, we also have
\[
\sup_{0<t<T} |F_{22}(t)|<\infty.
\]
This completes the proof of \thref{NO-lem3.15}.
\end{proof}

\medskip

\begin{lem}\thlabel{lem-F3}
For $F_3(t)$ in \thref{NO-lem3.13}, it holds that
\[
\sup_{0<t<T}\left|\int_0^t F_3(s)\, ds\right|<\infty.
\]
\end{lem}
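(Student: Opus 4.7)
The plan is to exploit that $\na\Psi_R$ is compactly supported in the annulus $A_R$, where \thref{NO-prop3.12} provides uniform $L^\infty$ bounds on $n$, $c_m$, $\na c$, and $u$ throughout $(0,T)$, and to appeal to the Navier--Stokes regularity from \thref{u-regularity-1} to absorb the pressure and the convective nonlinearity. Three of the five integrals in $F_3(t)$ are immediate: the $[(1+n)\log(1+n)-n]\,u\cdot\na\Psi_R$ term, the $\tfrac12|u|^2 u\cdot\na\Psi_R$ term, and the $nc_m u\cdot\na\Psi_R$ term all have integrands that are uniformly bounded on $A_R\times(0,T)$, and since $A_R$ has finite measure and $\na\Psi_R$ is a fixed smooth bounded function, each contributes at most a constant multiple of $T$ to $\int_0^t F_3(s)\,ds$.

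For the pressure term, I would integrate by parts using $\na\cdot u=0$ to write
\[
-\int_{\R^2} u\cdot\na P\,\Psi_R\,dx = \int_{\R^2} P\,u\cdot\na\Psi_R\,dx,
\]
which again localizes $P$ to $A_R$. By \thref{u-regularity-1}, $\na P\in L^2(\R^2\times(0,T))$; normalizing the time-dependent additive constant of $P(\cdot,t)$ so that it has zero mean on a fixed ball $B\supset A_R$ and applying Poincar\'{e}'s inequality on $B$ yields $\norm{P(\cdot,t)}_{L^2(A_R)}\lec \norm{\na P(\cdot,t)}_{L^2(\R^2)}$. Combining this with the $L^\infty$-bound on $u|_{A_R}$ and Cauchy--Schwarz in time produces a finite bound of the form $C_R\,T^{1/2}\norm{\na P}_{L^2(\R^2\times(0,T))}$.

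The convective term $-\int(u\cdot\na)u\cdot\na\Psi_R\,dx$ is the most direct: \thref{u-regularity-1} gives $u\cdot\na u\in L^2(\R^2\times(0,T))$, so Cauchy--Schwarz against the bounded, compactly supported $\na\Psi_R$ bounds its time integral by $T^{1/2}\norm{\na\Psi_R}_{L^2}\norm{u\cdot\na u}_{L^2(\R^2\times(0,T))}$. The main obstacle will be the pressure term, since $P$ is only determined up to a time-dependent additive constant and is not a priori known in any global Lebesgue space on $\R^2$; the Poincar\'{e}/BMO-type localization on a ball containing $A_R$ is the device that converts the $L^2$ control of $\na P$ provided by \thref{u-regularity-1} into usable control on $P$ precisely on the support of $\na\Psi_R$.
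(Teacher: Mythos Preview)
Your argument is correct, but the paper handles two of the five terms more economically than you do.

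For the pressure term, the paper does not integrate by parts at all. It simply bounds
\[
\left|\int_{\R^2} u\cdot\na P\,\Psi_R\,dx\right| \le \norm{u(t)}_2\norm{\na P(t)}_2 \le \tfrac12\norm{u(t)}_2^2 + \tfrac12\norm{\na P(t)}_2^2,
\]
and observes that the right-hand side lies in $L^1(0,T)$ by \thref{u-a-priori} and \thref{u-regularity-1}. Your route via integration by parts and Poincar\'{e} on a ball is valid (and your implicit observation that $\int u\cdot\na\Psi_R=0$ makes the normalization of $P$ irrelevant), but it is more elaborate than needed; what you flagged as ``the main obstacle'' is in fact a one-line estimate.

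For the convective term, the paper likewise avoids the global $L^2_{t,x}$ bound on $u\cdot\na u$. Integrating by parts with $\na\cdot u=0$ gives
\[
-\int_{\R^2}(u\cdot\na)u\cdot\na\Psi_R\,dx = \int_{\R^2} u_i u_j\,\pd_i\pd_j\Psi_R\,dx,
\]
whose integrand is supported in $A_R$ and is therefore bounded uniformly in $t$ by (ii) of \thref{NO-prop3.12}. Your appeal to \thref{u-regularity-1} works too, and has the mild advantage of not requiring any manipulation; the paper's version has the advantage of keeping all four $\na\Psi_R$ terms on exactly the same footing.
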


\begin{proof}
Note that
\[
- \int_{\R^2} u\cdot\na P \Psi_R\, dx \le \frac12\,\norm{u(t)}_2^2 + \frac12\, \norm{\na P(t)}_2^2 \in L^1(0,T)
\]
by \thref{u-regularity-1}, and all the other terms in \eqref{eq-F3} containing $\na\Psi_R$ are bounded by (i) and (ii) of \thref{NO-prop3.12}. Therefore \thref{lem-F3} is established.
\end{proof}


\medskip

The following proposition of interior estimate and the exterior estimate in \thref{NO-prop3.2} give the boundedness of the full modified entropy, which completes the proof of \thref{NO-thm3.1}.

\medskip

\begin{prop}\thlabel{NO-prop3.16}
Assume $\int_{\R^2} n_0\, dx \le 8\pi$. Then
\[
\sup_{0<t<T} \int_{|x|\le 4\tilde R_0} (1+n(t)) \log(1+n(t))\, dx < \infty,
\]
where $\tilde{R}_0$ is the one determined in \thref{NO-prop3.12}.
\end{prop}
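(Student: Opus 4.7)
The plan is to apply the identity of Lemma~\thref{NO-lem3.13} with $R=4\tilde R_0$, use the bounds on $F_1,F_2,F_3$ established in Lemmas~\thref{NO-lem3.14}, \thref{NO-lem3.15}, and \thref{lem-F3}, and close a Gronwall-type inequality for the modified free energy $H_{\text{int}}(t;R)$, following the scheme of Nagai--Ogawa \cite{NO-2016} for the PKS system.

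First I would integrate the identity of Lemma~\thref{NO-lem3.13} on $[0,t]$. Since $\sup_{0<s<T}|F_1(s)|<\infty$ and $\sup_{0<t<T}|\int_0^t F_i(s)\,ds|<\infty$ for $i=2,3$, this produces
\begin{equation*}
H_{\text{int}}(t;R) + \int_0^t \mathcal{D}(s)\,ds \le C(n_0, u_0, T) + \int_0^t \int_{\R^2} n(s)\log(1+n(s))\,\Psi_R\,dxds,
\end{equation*}
where $\mathcal{D}(s)\ge0$ collects the three dissipation terms appearing on the left of Lemma~\thref{NO-lem3.13}.

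Next I would absorb the source by decomposing $n \log(1+n) = n\bke{\log(1+n)-c_m} + n c_m$. The piece $n\bke{\log(1+n)-c_m}$ is treated by Cauchy--Schwarz against the dissipation $\int n|\nabla(\log(1+n)-c_m)|^2\Psi_R$ together with a Poincar\'e estimate on $B_{2R}$, absorbing the leading contribution into $\mathcal D$. For the piece $n c_m$, I would use the Young inequality associated with the Orlicz pair $\phi(s)=(1+s)\log(1+s)-s$ and $\phi^*(b)=e^b-b-1$, namely
\begin{equation*}
\alpha\,n c_m \le (1+n)\log(1+n) - n + e^{\alpha c_m} - \alpha c_m - 1,
\end{equation*}
and then invoke the Brezis--Merle inequality (Lemma~\thref{NO-lem2.7}) on $B_{2R}$ to bound $\int e^{\alpha c_m}\Psi_R\,dx$, with the boundary supremum of $c_m$ controlled by Proposition~\thref{NO-prop3.12}(ii) and the BMO bound $\|c_m\|_{\text{BMO}}\lec\|n_0\|_1$ from Proposition~\thref{BMO-est} together with Lemma~\thref{BMO-def-est}. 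Under the critical mass hypothesis $\|n_0\|_1\le 8\pi$, the admissible range is $\alpha<4\pi/\|n_0\|_1$, i.e., up to (but not including) $\alpha=1/2$. Combining with the definition of $H_{\text{int}}$ (using the same Young/Brezis--Merle bound on the $\int n c_m\Psi_R$ term appearing there) turns the integrated energy inequality into an integral inequality of the form
\begin{equation*}
\int_{\R^2}(1+n(t))\log(1+n(t))\Psi_R\,dx \le C_1 + C_2\int_0^t \int_{\R^2}(1+n(s))\log(1+n(s))\Psi_R\,dxds,
\end{equation*}
so that Gronwall's lemma yields boundedness on $[0,T]$, and the conclusion of the proposition follows since $\Psi_R\equiv 1$ on $\{|x|\le 4\tilde R_0\}$.

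The main obstacle I expect is the saturation of Brezis--Merle at $M=8\pi$: with $\alpha$ pinned strictly below $1/2$, the Young bound on $\int n c_m \Psi_R$ produces a coefficient strictly larger than $2$ in front of $\int(1+n)\log(1+n)\Psi_R$, so the naive absorption into $H_{\text{int}}$ does not close directly. The resolution uses the full dissipation $\mathcal D$---in particular the Fisher information $\int|\nabla\log(1+n)|^2\Psi_R$---to control the extra powers of $n$ via Sobolev/Trudinger--Moser type embeddings in $2$D, together with careful tracking of constants, exactly as in \cite{NO-2016}. The additional term $F_3$ from the Navier--Stokes coupling is a new ingredient relative to \cite{NO-2016}; it is already handled in Lemma~\thref{lem-F3} via the velocity regularity of Remark~\thref{u-regularity-1} and Proposition~\thref{NO-prop3.12}, so no new difficulty arises at this stage.
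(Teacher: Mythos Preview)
Your overall strategy---integrate the identity of Lemma~\thref{NO-lem3.13}, invoke the bounds on $F_1,F_2,F_3$, apply Young's inequality and Brezis--Merle, then close by Gronwall---matches the paper's. But the step you flag as ``the main obstacle'' is exactly where your proposal goes wrong, and the resolution you suggest is not the one that works.

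At $M=8\pi$ the Brezis--Merle exponent saturates, and you correctly note that $\alpha$ must stay strictly below $1/2$, so the naive absorption fails. Your proposed fix---exploit the Fisher information $\int|\nabla\log(1+n)|^2\Psi_R$ via Sobolev/Trudinger--Moser embeddings---is vague and, as stated, does not close: these dissipation terms carry no smallness, and there is no mechanism by which they absorb the extra entropy with a coefficient strictly less than one. The paper's resolution is entirely different and uses a piece of Proposition~\thref{NO-prop3.12} that you never invoke, namely part~(iii): there exist $x_0$ with $R<|x_0|<2R$, $\varepsilon_0>0$, and $\delta>0$ such that $n(x,t)\ge\delta$ on $B_{\varepsilon_0}(x_0)$ for all $t\in[0,T)$. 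This forces a uniform mass deficit inside the ball,
\[
\int_{|x|\le R} n(t)\,dx \le 8\pi - \delta\pi\varepsilon_0^2 < 8\pi,
\]
so one can choose $0<a<1$ with $\frac{\pi}{2(1-a)}(8-\delta\varepsilon_0^2)<4\pi$ and apply Lemma~\thref{NO-lem2.7} to $c_m/[2(1-a)]$. With this $a$ fixed, the Young inequality $\tfrac12 nc_m \le (1-a)(1+n)\log(1+n) + (1-a)e^{|c_m|/[2(1-a)]}$ yields directly
\[
\int_{\R^2}(1+n)\log(1+n)\Psi_R\,dx \le \tfrac1a\,H_{\text{int}}(t;R) + \tfrac1a\|n_0\|_1 + F_4(t),
\]
with $F_4$ uniformly bounded by Brezis--Merle (the boundary control of $c_m$ on $\{|x|=R\}$ comes from Proposition~\thref{NO-prop3.12}(ii)). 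This gives the linear differential inequality $\tfrac{d}{dt}H_{\text{int}}\le \tfrac1a H_{\text{int}} + F$, and Gronwall finishes. No dissipation is used at this stage; your decomposition $n\log(1+n)=n(\log(1+n)-c_m)+nc_m$ and the Cauchy--Schwarz/Poincar\'e step against the dissipation are unnecessary and, as you describe them, not obviously valid on $B_{2R}$ with the weight $n$.
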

\begin{proof}
Denote $R=4\tilde{R}_0$. It follows from \thref{NO-lem3.13} that
\EQN{
\frac{d}{dt}\, H_{\text{int}}(t;R)\le \int_{\R^2} (1+n(t))\log(1+n(t)) \Psi_R\, dx + F_1(t) + F_2(t) + F_3(t).
}
For $0<a<1$,
\[
\frac12\, nc_m 
\le (1-a)n\,\frac{|c_m|}{2(1-a)}
\le (1-a)(1+n)\log(1+n) + (1-a) \exp\bket{\frac{|c_m|}{2(1-a)}},
\]
where the Fenchel--Young inequality is used in the last inequality.
Then it follows that
\[
(1+n)\log(1+n) \le \frac1a \bke{(1+n)\log(1+n) - \frac12\, nc_m} + \frac{1-a}a\, \exp\bket{\frac{|c_m|}{2(1-a)}}.
\]
Then
\EQ{\label{NO-eq3.54}
\int_{\R^2} &(1+n(t)) \log(1+n(t)) \Psi_R\, dx\\
&\le \frac1a\, H_{\text{int}}(t;R) + \frac1a \int_{\R^2} n(t) \Psi_R\, dx + \frac{1-a}a \int_{\R^2} \exp\bket{\frac{|c_m|}{2(1-a)}} \Psi_R\, dx\\
&\le \frac1a\, H_{\text{int}}(t;R) + \frac1a\, \norm{n_0}_1 + F_4(t),
}
where
\[
F_4(t) = \frac{1-a}a \int_{\R^2} \exp\bket{\frac{|c_m|}{2(1-a)}} \Psi_R\, dx.
\]
 Therefore,
\EQ{\label{NO-eq3.55}
\frac{d}{dt}\, H_{\text{int}}(t;R) \le \frac1a\, H_{\text{int}}(t;R) + F(t),
}
where 
\[
F(t) = F_1(t) + F_2(t) + F_3(t) + F_4(t) + \frac1a\, \norm{n_0}_1.
\]

We claim 
\EQ{\label{NO-eq3.56}
\sup_{0<t<T} F_4(t)<\infty.
}
To prove this claim, by (iii) of \thref{NO-prop3.12} there exist $x_0\in\R^2$, $\ve_0>0$, $\de>0$ with $R+\ve_0<|x_0|<2R$ such that
\EQ{\label{NO-eq3.57}
n(x,t) \ge \de \qquad \text{ for } 0\le t<T,\ |x-x_0|\le \ve_0.
}
By (ii) of \thref{NO-prop3.12},
\[
C := \sup_{0<t<T} \norm{c_m(T)}_{L^\infty(R\le|x|\le2R)} < \infty.
\]
The function $c_m(t)$ satisfies
\[
-\De\bke{\frac{c_m(t)}{2(1-a)}} = \frac{n(t)}{2(1-a)},\qquad |x|<R
\]
and $\sup_{|x|=R}|c_m(t)|\le C$ $(0<t<T)$. By \eqref{NO-eq3.57}, we have
\EQN{
\int_{|x|\le R} n(t)\, dx 
&\le \int_{\R^2} n(t)\, dx - \int_{|x-x_0|\le \ve_0} n(t)\, dx\\
&\le 8\pi - \de\pi\ve_0^2 = \pi(8-\de\ve_0^2)
}
and hence
\[
\int_{|x|\le R} \frac{n(t)}{2(1-a)}\, dx \le \frac{\pi}{2(1-a)}\, (8-\de\ve_0^2).
\]
We choose $0<a<1$ such that 
\[
\frac{\pi}{2(1-a)}\, (8-\de\ve_0^2)<4\pi
\]
and apply \thref{NO-lem2.7} to get that for $0<t<T$,
\EQN{
\int_{|x|<R} \exp\bket{\frac{|c_m(t)|}{2(1-a)}} dx
& \le \frac{32\pi(1-a)R^2}{\de\ve_0^2 - 8a}\, \exp\bket{\sup_{|x|=R} \frac{|c_m(t)|}{2(1-a)}}\\
& \le \frac{32\pi(1-a)R^2}{\de\ve_0^2 - 8a}\, \exp\bket{\frac{C}{2(1-a)}}.
}
Hence
\[
\int_{\R^2} \exp\bket{\frac{|c_m(t)|}{2(1-a)}} \Psi_R\, dx 
\le \bke{\frac{32\pi(1-a)R^2}{\de\ve_0^2 - 8a} + 3\pi R^2} \exp\bket{\frac{C}{2(1-a)}},
\]
which implies \eqref{NO-eq3.56}.

By \eqref{NO-eq3.55}, we have
\[
H_{\text{int}}(t;R) 
\le H_{\text{int}}(0;R) + \frac1{a} \int_0^t H_{\text{int}}(s;R)\, ds + \sup_{0<t<T}\abs{\int_0^t F(s)\, ds}, \qquad 0<t<T
\]
and by \thref{NO-lem3.14}, \thref{NO-lem3.15}, \thref{lem-F3}, and \eqref{NO-eq3.56},
\[
\sup_{0<t<T}\abs{\int_0^t F(s)\, ds} < \infty.
\]
Applying the Gronwall inequality, we deduce that
\[
\sup_{0<t<T} H_{\text{int}}(t;R) < \infty.
\]
Therefore, by this estimate, \eqref{NO-eq3.54} and \eqref{NO-eq3.56}, we obtain
\[
\sup_{0<t<T} \int_{\R^2} (1+n(t))\log(1+n(t)) \Psi_R\, dx <\infty,
\]
which completes the proof of \thref{NO-prop3.16}.
\end{proof}

\bigskip

\section{Global existence of the Patlak--Keller--Segel--Navier--Stokes system (\ref{PKS-NS})}\label{S-global-exist}

\medskip

In this section, we prove our main result on global-in-time existence, \thref{NO-thm1.2}. 

\medskip

\begin{prop}\thlabel{prop-Linfty-bound}
Let $(n_0,u_0)$ satisfy \eqref{initial-n} and \eqref{initial-u} with $M:= \norm{n_0}_1 = 8\pi$. For a local-in-time mild solution $(n,u)$ on $[0,T)$ with initial data $(n_0,u_0)$ given in \thref{local-exist}. If $T<\infty$, it holds that for any $t_0\in(0,T)$
\[
\sup_{t\in(t_0,T)} \bke{ \norm{n(t)}_\infty + \norm{u(t)}_\infty } <\infty.
\]
\end{prop}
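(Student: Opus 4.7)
The plan is to bootstrap the uniform-in-$t$ entropy bound $(1+n)\log(1+n)\in L^\infty(t_0,T;L^1)$ provided by \thref{NO-thm3.1} to successively higher integrability of $n$, and then to use the resulting bounded forcing $n\na c$ to deduce an $L^\infty$ bound on $u$ via Navier--Stokes regularity. The algebraic input throughout is that the transport term $\int_{\R^2} n\,u\cdot\na n\,dx$ vanishes because $\na\cdot u=0$, so the $L^p$-energy identities for $n$ decouple from $u$ at leading order; the coupling re-enters only through the Duhamel/regularity step.

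First I would establish an $L^2$ bound on $n$. Testing $\eqref{PKS-NS}_1$ against $n$, integrating by parts, and using $-\De c=n$ produces
\[
\tfrac12\,\tfrac{d}{dt}\|n\|_2^2 + \|\na n\|_2^2 = \tfrac12\,\|n\|_3^3.
\]
By \eqref{NO-2011-lem2.1-2}, $\|n\|_3^3 \le C\ve^3\,\|(1+n)\log(1+n)\|_1\,\|\na n\|_2^2 + C(\ve)\,\|n\|_1$. Since $\|(1+n)\log(1+n)\|_1$ is uniformly bounded on $[t_0,T)$ by \thref{NO-thm3.1}, one may fix $\ve>0$ small enough to absorb the Dirichlet term, and Gr\"onwall's inequality yields $n\in L^\infty(t_0,T;L^2)\cap L^2(t_0,T;\dot H^1)$. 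This is the step where the critical mass $M=8\pi$ genuinely enters, through \thref{NO-thm3.1}.

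Next, testing $\eqref{PKS-NS}_1$ against $n^{p-1}$ gives, for each $p\ge 2$,
\[
\tfrac1p\,\tfrac{d}{dt}\|n\|_p^p + \tfrac{4(p-1)}{p^2}\,\|\na n^{p/2}\|_2^2 = \tfrac{p-1}{p}\,\|n\|_{p+1}^{p+1}.
\]
A 2D Gagliardo--Nirenberg interpolation on $n^{p/2}$ bounds the right-hand side by $C\|\na n^{p/2}\|_2^{2/p}\|n\|_p^p$, which is absorbable after Young's inequality once the $L^\infty_t L^q_x$ control for smaller $q$ is available; induction on $p$ delivers $n\in L^\infty(t_0,T;L^p)$ for every $p<\infty$. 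Choosing any $p>2$, \thref{Nagai2011-lem2.5} then produces $\na c\in L^\infty(t_0,T;L^\infty)$. Feeding this and $u\in L^s(0,T;L^q)$ from \thref{u-regularity} into the Duhamel formula \eqref{Duhamel-n} together with the semigroup estimates \eqref{semigp-est}, a parabolic bootstrap analogous to that in the proof of (ii) of \thref{local-exist} upgrades the density to $n\in L^\infty(t_0,T;L^\infty)$. Consequently the forcing $n\na c$ of $\eqref{PKS-NS}_3$ is bounded on $(t_0,T)\times\R^2$, and since $u_0\in W^{1,2}_{0,\si}$ by \eqref{initial-u} and $\pd_t u,\,u\cdot\na u,\,\na P\in L^2_{t,x}$ by \thref{u-regularity-1}, standard 2D Navier--Stokes parabolic regularity with bounded forcing promotes $u$ to $L^\infty(t_0,T;H^2)$, so the Sobolev embedding $H^2(\R^2)\hookrightarrow L^\infty(\R^2)$ finishes the proof.

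The principal obstacle will be the $L^2$ step for $n$: the absorption constant is dictated by the entropy bound of \thref{NO-thm3.1}, which is the most delicate result of the paper and the place where the critical-mass assumption actually bites; if the entropy were merely locally bounded (rather than uniformly on $[t_0,T)$) the absorption would fail as $t\to T^-$. The subsequent $L^p$-iteration and Navier--Stokes regularity upgrade are fairly routine once the vanishing of the advection term in the $L^p$ energy identities and the Prodi--Serrin regularity of \thref{u-regularity}--\thref{u-regularity-1} are properly combined with the Duhamel estimates of Section \ref{S-mild-solution}.
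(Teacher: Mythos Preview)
Your first two steps --- the $L^2$ bound on $n$ via \eqref{NO-2011-lem2.1-2} together with the entropy bound of \thref{NO-thm3.1}, and the subsequent $L^p$ energy identity --- follow the paper closely. (The paper stops at $p=4$, using $\|n\|_5^5 \le C\|n\|_2^2\|\na n^2\|_2^{3/2}$; note that the Gagliardo--Nirenberg you actually wrote, $\|n\|_{p+1}^{p+1}\le C\|\na n^{p/2}\|_2^{2/p}\|n\|_p^p$, involves the \emph{same} $L^p$ norm on the right and after Young produces the superlinear differential inequality $\tfrac{d}{dt}\|n\|_p^p\le C\|n\|_p^{p^2/(p-1)}$, which does not give a bound on $[t_0,T)$. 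You need the interpolation against the lower norm $\|n\|_2$ that your phrase ``smaller $q$'' alludes to.)

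The substantive gap is the step upgrading $n$ to $L^\infty(t_0,T;L^\infty)$ using only the Prodi--Serrin regularity $u\in L^s(0,T;L^q)$, $\tfrac2s+\tfrac2q=1$, from \thref{u-regularity}. This does not close. In the Duhamel integral $\int_{t_0}^t (t-s)^{-1/2-1/r}\|nu\|_r\,ds$, writing $\|nu\|_r\le \|n\|_a\|u\|_q$ with $\tfrac1r=\tfrac1a+\tfrac1q$ and applying H\"older in time against $\|u(\cdot)\|_q\in L^s_t$ requires $(\tfrac12+\tfrac1r)\tfrac{s}{s-1}<1$; since $\tfrac{s-1}{s}=\tfrac12+\tfrac1q$, this forces $r>q$, while the spatial H\"older forces $r<q$. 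The information in \thref{u-regularity} is exactly scale-critical and cannot by itself push $n$ to $L^\infty$.

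The paper resolves this by \emph{first} upgrading $u$. Once $n\in L^\infty_tL^4$ and hence $n\na c\in L^\infty_tL^2$, the vorticity equation yields $\omega=\na\times u\in L^\infty_tL^2$, i.e.\ $\na u\in L^\infty_tL^2$; Sobolev then gives $u\in L^\infty_tL^q$ for every finite $q$ (now subcritical), and a Duhamel step on \eqref{Duhamel-u} produces $u\in L^\infty_{t,x}$. Only then does the paper return to \eqref{Duhamel-n} to get $n\in L^\infty_{t,x}$, with both $\na c$ and $u$ already bounded. Your ordering --- $n\in L^\infty$ first, then $u$ via ``bounded forcing'' --- inverts this, and the first half cannot be justified as written. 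The fix is to interpose a $\na u\in L^\infty_tL^2$ step (either via the vorticity energy identity, as the paper does, or via the $H^1$-energy estimate for 2D Navier--Stokes with forcing in $L^\infty_tL^2$) between your $L^p$ bounds on $n$ and your $L^\infty$ bounds.
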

\begin{proof}
The proof is divided into five steps.

\noindent {\bf Step 1.} 
By testing \eqref{PKS-NS}$_1$ with $n$ and using the inequality \eqref{NO-2011-lem2.1-2}, we get for $t\in(t_0, T)$, where $t_0\in(0,T)$ is fixed, that
\EQN{
&\frac12\, \norm{n(t)}_2^2 + \int_{t_0}^t \norm{\na n(\tau)}_2^2\, d\tau\\
&~ = \frac12\, \norm{n(t_0)}_2^2 + \frac12 \int_{t_0}^t \norm{n(\tau)}_3^3\, d\tau\\
&~ \le \frac12\, \norm{n(t_0)}_2^2 + \frac{\ve}2 \bke{\sup_{t\in(t_0,T)} \norm{(1+n(t))\log(1+n(t))}_1} \int_{t_0}^t \norm{\na n(\tau)}_2^2\, d\tau + \frac{C(\ve)}2\,Mt
} 
since $\nabla\cdot u=0$. It follows from \thref{NO-thm3.1} that $\sup_{t\in(t_0,T)} \norm{(1+n(t))\log(1+n(t))}_1<\infty$. Thus we can choose $\ve>0$ sufficiently small so that for any $t_0\in(0,T)$
\EQ{\label{prop5.1-step1}
\sup_{t\in(t_0,T)}\norm{n(t)}_2<\infty
}
if $T<\infty$.

\medskip

\noindent {\bf Step 2.} Multiplying $\eqref{PKS-NS}_1$ with $n^{p-1}$, we have
\EQN{
\frac{d}{dt}\, \bke{\frac{n^p}p} + u\cdot\na\bke{\frac{n^p}p} 
= \na\cdot (n^{p-1}\na n) - \frac{4(p-1)}{p^2}\, |\na(n^{p/2})|^2 - \na\cdot(n\na c)n^{p-1}.
}
Integrating over $\R^2$, we obtain
\[
\frac1p\,\frac{d}{dt}\,\norm{n(t)}_p^p = -\frac{4(p-1)}{p^2} \int_{\R^2} |\na(n^{p/2})|^2\, dx + \frac{p-1}p \int_{\R^2} n^{p+1} \, dx.
\]
Taking $p=4$, 
\[
\frac14\,\frac{d}{dt}\,\norm{n(t)}_4^4 = -\frac34 \int_{\R^2} |\na(n^2)|^2 \, dx + \frac34 \int_{\R^2} n^5 \, dx.
\]
By Gagliardo-Nirenberg-Sobolev inequality and Young's inequality, we have
\EQN{
\norm{n}_5^5 = \norm{n^2}_{5/2}^{5/2} 
\le C \norm{n^2}_1 \norm{\na(n^2)}_2^{3/2}
=&~ C \norm{n}_2^2 \norm{\na(n^2)}_2^{3/2}\\
\le&~ \ve \norm{\na(n^2)}_2^2 + C \norm{n}_2^8.
}
Choosing $\ve>0$ sufficiently small, we have $\frac{d}{dt} \norm{n(t)}_4^4 \le C\norm{n}_2^8$, and thus for $t\in(t_0,T)$, where $t_0\in(0,T)$ is fixed, that
\[
\norm{n(t)}_4^4 \le \norm{n(t_0)}_4^4 + C\bke{\underset{s\in(t_0,T)}{\sup}\norm{n(s)}_2^8} t.
\]
Therefore, by \eqref{prop5.1-step1}, we get for any $t_0\in(0,T)$
\EQ{\label{prop5.1-step2}
\sup_{t\in(t_0,T)}\norm{n(t)}_4<\infty
}
if $T<\infty$.

\medskip

\noindent {\bf Step 3.} 
Since $\na\cdot u=0$, $\norm{\na u}_2 = \norm{\na\times u}_2 = \norm{\omega}_2$, where $\omega = \na \times u$. The vorticity equation of $\eqref{PKS-NS}_3$ reads 
\[
\pd_t \omega + (u \cdot \na) \omega = \De \omega + \na \times(n \na c),
\]
which has the following energy equality:
\EQN{
\frac12\, \frac{d}{dt}\, \norm{\omega(t)}_2^2 + \norm{\na \omega (t)}_2^2 
=& \int_{\R^2} \na \times(n \na c)\, \omega\, dx\\
=& - \int_{\R^2} (n \na c)\cdot (\na^\perp \omega)\, dx,
}
where we integrate by parts in the last equality. Thus, for $t\in(t_0, T)$, where $t_0\in(0,T)$ is fixed,
\EQN{
\frac12\, \norm{\omega(t)}_2^2 + \int_{t_0}^t \norm{\na \omega (\tau)}_2^2\, d\tau
=&~\frac12\, \norm{\omega(t_0)}_2^2 -  \int_{t_0}^t\int_{\R^2} (n \na c)\cdot (\na^\perp \omega)(\tau)\, dx d\tau\\
\le&~\frac12\, \norm{\omega(t_0)}_2^2 +  \int_{t_0}^t \norm{(n \na c)(\tau)}_2\norm{\na^\perp\omega(\tau)}_2\, d\tau\\
\le&~\frac12\, \norm{\omega(t_0)}_2^2 +  \int_{t_0}^t \bke{\frac{\norm{(n \na c)(\tau)}_2^2}2 + \frac{\norm{\na^\perp\omega(\tau)}_2^2}2} d\tau.
}
By \eqref{prop5.1-step1} and \eqref{prop5.1-step2},
\EQN{
\sup_{t\in(t_0,T)} \norm{n\na c}_2
\le \sup_{t\in(t_0,T)} \norm{n}_4 \norm{\na c}_4
&\lec \sup_{t\in(t_0,T)} \norm{n}_4 \norm{n}_{4/3}\\
&\le \sup_{t\in(t_0,T)} \norm{n}_4 \norm{n}_1^{1/2} \norm{n}_2^{1/2}
\le C
}
and $\norm{\na^\perp\omega}_2\le \norm{\na\omega}_2$.
We have
\EQN{
\norm{\omega(t)}_2^2 + \frac12\int_{t_0}^t \norm{\na \omega (\tau)}_2^2\, d\tau
\le&~\norm{\omega(t_0)}_2^2 +  \frac{C^2}2\, T.
}
Therefore, we conclude that for any $t_0\in(0,T)$
\EQ{\label{prop5.1-step3}
\sup_{t\in(t_0,T)} \norm{\na u(t)}_2 <\infty
}
if $T<\infty$.

\medskip

\noindent {\bf Step 4.}
By \eqref{prop5.1-step3} and the boundedness $\norm{u}_2$ in \thref{u-a-priori} where the bound is independent of time since $M=8\pi$, we have for $1\le q<\infty$ and $t_0\in(0,T)$
\EQ{\label{prop5.1-step4-0}
\sup_{t\in(t_0,T)} \norm{u(t)}_q \lec \sup_{t\in(t_0,T)} \norm{u(t)}_2^{8/q} \norm{\na u(t)}_2^{8/q} \lec C
}
by the Sobolev's embedding theorem.

For $\tau\in(0,T-t_0)$, where $t_0\in(0,T)$ is fixed,
\[
u(t_0+\tau) = e^{t_0 \De} u(\tau) - \int_0^{t_0} e^{(t_0 - s)\De} {\bf P} \na \cdot \bkt{\na c(s+\tau) \otimes \na c(s+\tau) + u(s+\tau)\otimes u(s+\tau)} ds.
\]
Then
\EQN{
\norm{u(t_0+\tau)}_\infty 
&\le \norm{e^{t_0 \De}u(\tau)}_\infty + C \int_0^{t_0} (t_0 - s)^{-\frac12 - \frac13} \bke{\norm{\na c(s+\tau)}_{6}^2 + \norm{u(s+\tau)}_{6}^2} ds\\
&\lec t_0^{-\frac12} \norm{u_0}_2 + t_0^{\frac16} \sup_{s\in(0,t_0)} \bke{\norm{n(s+\tau)}_{3/2}^2 + \norm{u(s+\tau)}_6^2}\\
&\lec t_0^{-\frac12} \norm{u_0}_2 + t_0^{\frac16} \sup_{\si\in(\tau,T)} \bke{\norm{n(\si)}_1^{2/3} \norm{n(\si)}_2^{4/3} + \norm{u(\si)}_6^2}\\
&\le C
}
by \eqref{prop5.1-step1} and \eqref{prop5.1-step4-0}. Note that the bound is independent of $\tau$ since $\norm{u(t_0+\tau)}_\infty$ is bounded as $\tau\to0_+$. Thus, we conclude for any $t_0\in(0,T)$
\EQ{\label{prop5.1-step4}
\sup_{t\in(t_0,T)} \norm{u(t)}_\infty < \infty
}
if $T<\infty$.

\medskip

\noindent {\bf Step 5.}
From \thref{Nagai2011-lem2.5}, we have for any $t_0\in(0,T)$
\EQ{\label{prop5.1-step5-1}
\sup_{t\in(t_0,T)} \norm{\na c(t)}_\infty \lec \sup_{t\in(t_0,T)} \norm{n(t)}_1^{\frac13} \norm{n(t)}_4^{\frac23} \le C
}
by \eqref{prop5.1-step2}. For $\tau\in(0,T-t_0)$, where $t_0\in(0,T)$ is fixed,
\[
n(t_0+\tau) = e^{t_0 \De} n(\tau) - \int_0^{t_0} e^{(t_0 - s)\De} \na \cdot \bkt{n(s+\tau)\na c(s+\tau) + n(s+\tau) u(s+\tau)} ds.
\]
Then
\EQN{
&\norm{n(t_0+\tau)}_\infty \\
&\le \norm{e^{t_0 \De}n(\tau)}_\infty + C \int_0^{t_0} (t_0 - s)^{-\frac12 - \frac14} \bke{\norm{n(s+\tau)\na c(s+\tau)}_4 + \norm{n(s+\tau)u(s+\tau)}_4} ds\\
&\lec t_0^{-1} \norm{n(\tau)}_1 + t_0^{\frac14} \bkt{\sup_{\si\in(\tau,T)} \bke{\norm{\na c(\si)}_\infty + \norm{u(\si)}_\infty}} \sup_{\si\in(\tau,T)} \norm{n(\si)}_4\\
&\le C
}
by \eqref{prop5.1-step5-1}, \eqref{prop5.1-step4} and \eqref{prop5.1-step2}. Note that the bound is independent of $\tau$ since $\norm{n(t_0+\tau)}_\infty$ is bounded as $\tau\to0_+$. Thus, we conclude for any $t_0\in(0,T)$
\EQ{\label{prop5.1-step5}
\sup_{t\in(t_0,T)} \norm{n(t)}_\infty < \infty
}
if $T<\infty$.
The proposition follows from \eqref{prop5.1-step4} and \eqref{prop5.1-step5}.
\end{proof}

\medskip

\begin{proof}[Proof of \thref{NO-thm1.2}]

Let $T_m$ be the maximal existence time of $(n,u)$, and suppose $T_m<\infty$ by contradiction. 

\medskip

For the subcritical case $M<8\pi$. Let $T<T_m$ be fixed. From (iv) of \thref{local-exist}, $n(T)$ and $u(T)$ belong to $H^2(\R^2)$. According to \cite[Theorem 1]{GH-arxiv2020}, solutions exist globally in time, which contradicts the definition of $T_m$. Moreover, \eqref{eq-Linf-bound} is a consequence of \cite[(1.3)]{GH-arxiv2020} and a use of Sobolev's embedding.

\medskip

For the critical case $M=8\pi$, we first claim that $n(t)\to\tilde n_0$ in $L^{4/3}$ and $u(t)\to\tilde u_0$ in $L^2$ as $t\to T_m-$ for some $(\tilde n_0, \tilde u_0)\in L^{4/3}\times L^2_\si$. Indeed, for fixed $t_0\in(0,T_m)$, since
\EQN{
\na^k n(t) &= \na^k e^{(t-t_0/2)\De}\, n(t_0/2) - \int_{t_0}^t \na^k \na e^{(t-s)\De} \bke{n(s)\na c(s) + n(s) u(s)} ds\\
&=: \na^k e^{(t-t_0/2)\De}\, n(t_0/2) + I,
}
where, by \cite[(3.1)]{Nagai-2011},
\[
\norm{\na^k e^{(t-t_0/2)\De}\, n(t_0/2)}_{4/3}\le Ct_0^{-k/2}\norm{n(t_0/2)}_{4/3}
\]
and
\EQN{
\norm{I}_{4/3} 
&\lec C \int_{t_0}^t (t-s)^{-\bke{1-\frac34}-\frac{k+1}2} \norm{n(s)\bke{\na c(s) + u(s)}}_1\, ds\\
&\lec C \int_{t_0}^t (t-s)^{-\frac34-\frac{k}2} \norm{n(s)}_{4/3}\bke{\norm{n(s)}_{4/3}+\norm{u(s)}_4}\, ds
}
in which $\norm{n}_{4/3}$ and $\norm{u}_4$ are uniformly bounded in time by \thref{prop-Linfty-bound}, we obtain for $0<k<1/2$ that
\[
\sup_{t_0\le t\le T_m}\norm{\na^k n(t)}_{4/3}<\infty,
\]
and thus, using \cite[(3.2)]{Nagai-2011},
\[
\norm{e^{(t-\tau)\De}n(\tau) - n(\tau)}_{4/3} \le C(t-\tau)^{k/2} \norm{\na^k n(\tau)}_{4/3} \to0
\]
as $\tau,t\to T_m-$. Therefore, $n_0(t)\to\tilde n_0$ in $L^{4/3}$ as $t\to T_m-$. Similarly, since 
\[
\norm{e^{(t-\tau)\De} u(\tau) - u(\tau)}_4 \le C(t-\tau)^{3/4} \norm{\na u(\tau)}_2
\]
in which $\sup_{t\in(t_0,T)} \norm{\na u(t)}_2 <\infty$ by \eqref{prop5.1-step3}, we also have $u_0\to\tilde u_0$ in $L^4$ as $t\to T_m-$. It is obvious that $\tilde n_0\in L^1\cap L^{4/3}$ and $\tilde u_0\in L^2\cap L^4$. By \thref{local-exist} and \thref{Nagai-2011-rmk2.1}, there exists a unique mild solution $(\tilde n,\tilde u)$ on $[0,T_0)$, $T_0>0$, with initial data $(\tilde n_0, \tilde u_0)$, satisfying $\tilde n\in BC([0,T_0);L^1\cap L^{4/3})$ and $\tilde u\in BC([0,T_0);L^2\cap L^4)$. Define $(\hat n,\hat u)$ by
\[
\hat n(t) = 
\begin{cases}
n(t),&\qquad 0\le t<T_m,\\
\tilde n(t-T_m),&\qquad T_m\le t<T_m+T_0,
\end{cases}
\]
\[
\hat u(t) = 
\begin{cases}
u(t),&\qquad 0\le t<T_m,\\
\tilde u(t-T_m),&\qquad T_m\le t<T_m+T_0.
\end{cases}
\]
Then $(\hat n,\hat u)$ is a mild solution on $[0,T_m+T_0)$, contradicting the choice of $T_m$. This proves the global-in-time existence. Finally, the $L^\infty$-boundedness \eqref{eq-Linf-bound} follows from \thref{prop-Linfty-bound}.
\end{proof}

\bigskip

\section*{Acknowledgements}
The research of C. Lai is partially supported by FYF (\#6456) of Graduate and Postdoctoral Studies, UBC. The research of J. Wei is partially supported by the NSERC grant of Canada. C. Lai thanks Prof. Tai-Peng Tsai for useful discussions and finding a typo.

\medskip







\def\cprime{$'$}

\end{document}